\newtheorem{theorem}{Theorem}[section]
\newtheorem{lemma}[theorem]{Lemma}
\newtheorem{corollary}[theorem]{Corollary}
\newtheorem{question}[theorem]{Question}
\newtheorem{remark}[theorem]{Remark}
\newtheorem{proposition}[theorem]{Proposition}
\newtheorem{example}[theorem]{Example}
\newtheorem{fact}[theorem]{Fact}
\numberwithin{equation}{section}
\newcommand{\CC}{C_k}
\newcommand{\NN}{\mathbb{N}}
\newcommand{\GG}{\mathfrak{G}}
\newcommand{\KK}{\mathcal{K}}
\newcommand{\IR}{\mathbb{R}}
\newcommand{\II}{\mathbb{I}}
\newcommand{\eps}{\varepsilon}
\renewcommand{\phi}{\varphi}
\newcommand{\U}{\mathcal U}
\newcommand{\dx}{\;{\rm d}}
\newcommand{\er}{\mathbb R}
\newcommand{\en}{\mathbb N}
\newcommand{\sub}{\subseteq}
\title[The Ascoli property for function spaces]{The Ascoli property for function spaces and \\ the weak topology of Banach and Fr\'echet spaces}
\author{S.~Gabriyelyan}
\address{Department of Mathematics, Ben-Gurion University of the
Negev, Beer-Sheva, P.O. 653, Israel}
\email{saak@math.bgu.ac.il}
\author{J. K{\c{a}}kol}
\thanks{The second named author was supported by the Center for Advanced Studies in Mathematics at Ben-Gurion University of the Negev
and by Generalitat Valenciana, Conselleria d'Educaci\'{o}, Cultura i Esport, Spain, Grant PROMETEO/2013/058.}
\address{Faculty of Mathematics and Informatics A. Mickiewicz University, $61-614$ Pozna{\'n}, Poland}
\email{kakol@amu.edu.pl}
\author{G. Plebanek}
\address{Instytut Matematyczny, Uniwersytet Wroc{\l}awski, Wroc{\l}aw, Poland}
\email{grzes@math.uni.wroc.pl}
 \thanks{The third name author was partially supported by NCN grant 2013/11/B/ST1/03596 (2014-2017).}
\begin{document}

\begin{abstract}
Following \cite{BG} we say that a Tychonoff space $X$  is an Ascoli space  if every  compact subset $\KK$ of $\CC(X)$ is evenly continuous; this notion is closely related to the classical Ascoli theorem.  Every $k_\IR$-space, hence any $k$-space,  is Ascoli. 

Let $X$ be a metrizable space. We prove that the space $C_{k}(X)$ is Ascoli iff $C_{k}(X)$ is a $k_\IR$-space iff $X$ is locally compact. Moreover,
$C_{k}(X)$ endowed with the weak topology is Ascoli iff $X$ is countable and discrete.

Using some basic concepts from probability theory and measure-theoretic properties of $\ell_1$, we show that the following assertions are equivalent for a Banach space $E$: (i)  $E$ does not contain isomorphic copy of $\ell_1$, (ii) every real-valued sequentially continuous map on the unit ball $B_{w}$ with the weak topology is continuous, (iii) $B_{w}$ is a  $k_\IR$-space, (iv) $B_{w}$ is an Ascoli space.


We prove also that a Fr\'{e}chet lcs  $F$ does not contain isomorphic copy of $\ell_1$ iff each closed and convex bounded subset of $F$ is Ascoli in the weak topology. However we show that a Banach space $E$ in the weak topology is Ascoli iff $E$ is finite-dimensional. We supplement the last result by showing that a Fr\'{e}chet lcs $F$ which is a quojection is Ascoli in the weak topology iff either $F$ is finite dimensional or $F$ is isomorphic to the product $\mathbb{K}^{\mathbb{N}}$, where $\mathbb{K}\in\{\mathbb{R},\mathbb{C}\}$.
\end{abstract}

\maketitle

\section{Introduction}

Several  topological properties of function spaces have been intensively studied for many years, see for instance  \cite{Arhangel,kak,mcoy}  and references therein. In particular, various topological properties generalizing metrizability attracted a lot of attention. Let us  mention,  for example,   Fr\'{e}chet--Urysohn property, sequentiality, $k$-space property and $k_\IR$-space property  (all relevant definitions are given in Section \ref{sec:2} below).
 It is well known that
\[
\xymatrix{
\mbox{metric} \ar@{=>}[r] & {\mbox{Fr\'{e}chet--}\atop\mbox{Urysohn}} \ar@{=>}[r] & \mbox{sequential} \ar@{=>}[r] &  \mbox{$k$-space} \ar@{=>}[r] &  \mbox{$k_\IR$-space}} ,
\]
and none of these implications is reversible (see \cite{Eng,Mi73}).

For topological spaces $X$ and $Y$, we denote by $\CC(X,Y)$ the space $C(X,Y)$ of all continuous  functions from $X$ into $Y$ endowed with the compact-open topology. For $\II =[0,1]$, Pol \cite{Pol-1974} proved the following remarkable result
\begin{theorem}[\cite{Pol-1974}] \label{t:Pol-k-space}
Let $X$ be a first countable paracompact space. Then the space $\CC(X,\II)$ is a $k$-space  if and  only if $X=L\cup D$ is the topological sum of a locally compact Lindel\"{o}f space $L$ and a discrete space $D$.
\end{theorem}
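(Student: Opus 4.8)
The plan is to treat the two implications separately: the easy direction through a product representation of $\CC(X,\II)$, and the hard direction by first forcing local compactness of $X$ and then controlling the clopen decomposition of a locally compact paracompact space.

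For sufficiency, suppose $X=L\oplus D$ with $L$ locally compact Lindel\"of and $D$ discrete. Since a compact subset of a topological sum is contained in finitely many summands, the compact-open topology factors as a product, giving $\CC(X,\II)\cong\CC(L,\II)\times\CC(D,\II)$. As $L$ is locally compact and Lindel\"of it is hemicompact, so $\CC(L,\II)$ is metrizable and in particular a $k$-space; and $\CC(D,\II)=\II^{D}$ is compact, hence locally compact. It is classical that the product of a $k$-space with a locally compact Hausdorff space is a $k$-space, so $\CC(X,\II)$ is a $k$-space.

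For necessity I would argue contrapositively in two stages. \emph{Stage one:} if $X$ is not locally compact then $\CC(X,\II)$ is not a $k$-space. Fix $x_0$ at which $X$ fails to be locally compact; as $x_0$ cannot be isolated, first countability supplies a decreasing neighbourhood base $(U_n)$ with no $\ol{U_n}$ compact. From this I would manufacture a real-valued map $\Phi$ on $\CC(X,\II)$ that is continuous on every compact set yet discontinuous at the zero function, so that $\CC(X,\II)$ is not even a $k_\IR$-space, a fortiori not a $k$-space. The mechanism is Ascoli's theorem: every compact $\KK\subseteq\CC(X,\II)$ is evenly continuous, hence equicontinuous at $x_0$, so oscillations near $x_0$ stay uniformly small on $\KK$; by contrast the failure of local compactness lets one exhibit functions of tiny support clustering at $x_0$ on which $\Phi$ is bounded away from $0$, destroying global continuity. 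I expect this stage to be the main obstacle, since it is where the interplay between the local topology of $X$ at $x_0$ and the $k$-space structure of $\CC(X,\II)$ must be made quantitative; it parallels the metrizable equivalences recorded in the abstract.

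\emph{Stage two:} now $X$ is locally compact and paracompact, hence a topological sum $X=\bigsqcup_{i\in I}Y_i$ of clopen $\sigma$-compact (equivalently locally compact Lindel\"of) subspaces. Let $J=\{i\in I: Y_i$ has a non-isolated point$\}$. For $i\in J$ the space $\CC(Y_i,\II)$ is completely metrizable (as $Y_i$ is hemicompact) and non-compact — by Ascoli, since $C(Y_i,\II)$ cannot be equicontinuous at a non-isolated point — hence not countably compact. If $J$ were uncountable, then $\CC(X,\II)\cong\prod_{i}\CC(Y_i,\II)$ would contain, as a closed subspace obtained by freezing the coordinates outside $J$, an uncountable product of non-countably-compact spaces; such a product is never a $k$-space (the classical obstruction exemplified by $\IR^{\omega_1}$), contradicting that closed subspaces of $k$-spaces are $k$-spaces. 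Thus $J$ is countable, and setting $L=\bigsqcup_{i\in J}Y_i$ (a countable sum of locally compact Lindel\"of spaces, again locally compact Lindel\"of) and $D=\bigsqcup_{i\notin J}Y_i$ (a sum of clopen discrete pieces, hence discrete) yields the required decomposition $X=L\oplus D$.
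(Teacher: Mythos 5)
Your overall architecture (sufficiency via $\CC(X,\II)\cong \CC(L,\II)\times \II^{D}$, necessity by first forcing local compactness of $X$ and then counting the non-discrete clopen summands) is sound, and the sufficiency half is complete modulo standard facts. But Stage one of the necessity is a genuine gap, not a detail: the implication ``$X$ not locally compact $\Rightarrow$ $\CC(X,\II)$ is not a $k$-space'' is the whole difficulty of the theorem, and you never construct the witness --- you only describe what it should accomplish (``I would manufacture a real-valued map $\Phi$\dots''). Note that the paper itself does not reprove Theorem \ref{t:Pol-k-space} (it cites \cite{Pol-1974}); the closest it comes is Proposition \ref{p:Ascoli-Pol}, which is exactly this missing construction, adapted from Section 5 of \cite{Pol-1974}. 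There one fixes $x_0$ with no compact neighborhood, extracts non-compact closed sets $P_i\subset \ol{V_i}\setminus V_{i+1}$ between members of a nested base, chooses in each $P_i$ a closed discrete sequence $\{x_{j,i}\}_j$ and then --- using paracompactness in an essential way, not just first countability --- a discrete family of open sets $V_{j,i}\ni x_{j,i}$ missing $\ol{V_{i+1}}$, and finally defines functions $f_{q,p}$ with $f_{q,p}(x_{q,p})=1$, $f_{q,p}(x_0)=1/p$ and $f_{q,p}\le 1/p$ off $V_{q,p}$. The delicate part, which your sketch does not touch, is the verification that every compact $K\subset\CC(X)$ meets only finitely many of the associated neighborhoods $U_{q,p}$; this is the two-stage equicontinuity (Ascoli) argument \eqref{e:Ascoli-Ck-4}--\eqref{e:Ascoli-Ck-5}, run on the two compact sets $\{x_{q_i,p_i}\}_i\cup\{x_0\}$ and $\{x_{j,i}:1\le j\le i\}\cup\{x_0\}$ with carefully tuned constants. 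Your heuristic (``functions of tiny support clustering at $x_0$'') has the right flavor, but arranging that the supports eventually avoid every compact subset of $X$ while $\mathbf{0}$ stays in the closure of the family is precisely where all the work lies; until that construction is supplied, the key direction of the theorem is unproved.

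There is also a smaller precision issue in Stage two: what you need there is not that $\IR^{\omega_1}$ fails to be a $k$-space, but that $\NN^{\omega_1}$ does. Your reduction freezes coordinates and passes to the closed subspace $\prod_{i\in J_0}D_i\cong\NN^{\omega_1}$, where $J_0\subset J$ has size $\aleph_1$ and $D_i$ is an infinite closed discrete subset of the non-countably-compact metrizable space $\CC(Y_i,\II)$; since non-$k$-ness does not pass to closed subspaces, quoting $\IR^{\omega_1}$ proves nothing about this subspace. The fact that an uncountable power of the countable discrete space is not a $k$-space is indeed classical (it is one of the standard examples separating $k$-spaces from $k_\IR$-spaces, cf.\ \cite{Mi73}), so this step is correct once properly cited. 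With that citation fixed, and with Stage one repaired --- most efficiently by invoking Proposition \ref{p:Ascoli-Pol}, which yields the stronger conclusion that $\CC(X,\II)$ is not even Ascoli --- your proof goes through.
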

Theorem \ref{t:Pol-k-space} easily implies the following result noticed in \cite{Gabr-C2}.
\begin{corollary} \label{c:Pol-k-space}
For a metric space $X$, the space $\CC(X)$ is a $k$-space  if and  only if $\CC(X)$ is a Polish space if and only if $X$ is a Polish locally compact space.
\end{corollary}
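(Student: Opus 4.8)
The plan is to establish the three properties as a cycle
\[
X \text{ Polish and locally compact} \Rightarrow \CC(X) \text{ Polish} \Rightarrow \CC(X) \text{ a } k\text{-space} \Rightarrow X \text{ Polish and locally compact},
\]
where only the final implication invokes Theorem \ref{t:Pol-k-space}. The first two implications are soft. If $X$ is Polish and locally compact, then $X$ is Lindel\"of, so local compactness makes $X$ $\sigma$-compact and hence hemicompact; therefore $\CC(X)$ is metrizable, and since a locally compact space is a $k_\IR$-space, $\CC(X)$ is also complete, i.e.\ a Fr\'echet space. As $X$ is second countable, $\CC(X)$ is separable, so it is a separable completely metrizable space, that is, Polish. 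The implication ``Polish $\Rightarrow$ $k$-space'' is immediate, since a metrizable space is Fr\'echet--Urysohn and in particular a $k$-space.

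For the substantial implication, suppose $\CC(X)=\CC(X,\IR)$ is a $k$-space. First I would pass from real-valued to $\II$-valued functions by a truncation retraction: the map $r:\IR\to\II$, $r(t)=\min\{\max\{t,0\},1\}$, is a continuous retraction of $\IR$ onto $\II$, and $f\mapsto r\circ f$ defines a continuous retraction of $\CC(X)$ onto the subspace $\CC(X,\II)$. Since a retract of a $k$-space is a $k$-space (a retraction is a quotient map, and quotients of $k$-spaces are $k$-spaces), $\CC(X,\II)$ is a $k$-space. Now $X$, being metric, is first countable and paracompact, so Theorem \ref{t:Pol-k-space} applies and yields a decomposition $X=L\oplus D$ as a topological sum of a locally compact Lindel\"of space $L$ and a discrete space $D$.

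It remains to extract ``Polish and locally compact'' from this decomposition. Local compactness is automatic, as $X$ is the topological sum of the locally compact spaces $L$ and $D$; moreover a locally compact metrizable space is locally closed, hence a $G_\delta$, in its completion, and therefore completely metrizable. The crux is separability, i.e.\ showing $D$ is countable. Here I would use that $L$ is Lindel\"of and metric, hence second countable and separable, so the only obstruction to separability of $X$ is an uncountable $D$. The clopen splitting $X=L\oplus D$ gives $\CC(X)\cong\CC(L)\times\CC(D)$, and since the compact subsets of the discrete space $D$ are finite, $\CC(D)=\IR^{D}$ with the product topology sits inside $\CC(X)$ as a closed subspace. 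A closed subspace of a $k$-space is a $k$-space, but $\IR^{D}$ fails to be a $k$-space whenever $D$ is uncountable (it contains the closed copy $\IR^{\omega_1}$, which is not a $k$-space). Hence $D$ is countable, $X$ is separable, and combined with complete metrizability we conclude that $X$ is Polish. I expect the main obstacle to be precisely this last step: Theorem \ref{t:Pol-k-space} controls the topological type of $X$ but says nothing about cardinality, so the real work is recognizing that the $k$-space property of the full space $\CC(X)$ --- rather than of $\CC(X,\II)$ --- rules out an uncountable discrete summand through the failure of $\IR^{D}$ to be a $k$-space.
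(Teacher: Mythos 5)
Your proof is correct and takes precisely the route the paper intends: the paper gives no explicit argument for this corollary (it merely asserts that Theorem \ref{t:Pol-k-space} ``easily implies'' it), and your write-up is exactly that derivation with the soft implications and the deduction from the $L\oplus D$ decomposition filled in. The one genuinely non-obvious ingredient you supply---ruling out an uncountable discrete summand $D$ because $\IR^{D}$ would then be a closed subspace of the $k$-space $\CC(X)$ while containing a closed copy of the non-$k$-space $\IR^{\omega_1}$---is sound and consistent with the paper itself, which cites this fact about $\IR^{\omega_1}$ (Kelley, Problem 7.J(b)) in its final section.
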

Note also that by a result of Pytkeev \cite{Pyt2}, for a topological space $X$ the space $\CC(X)$ is a $k$-space if and only if it is Fr\'{e}chet--Urysohn. For a metrizable space $X$ and the doubleton $\mathbf{2}=\{ 0,1\}$, topological properties of the space $\CC(X,\mathbf{2})$  are thoroughly studied in \cite{Gabr-C2}.

For a topological space $X$, denote by $\psi: X\times\CC(X)\rightarrow \IR$, $\psi(x,f):=f(x)$, the evaluation map. Recall that a subset $\KK$ of $\CC(X)$ is {\em evenly continuous} if the restriction of $\psi$ onto $X\times \KK$ is jointly continuous, i.e. for any $x\in X$, each $f\in\KK$  and every neighborhood $O_{f(x)}\subset Y$ of $f(x)$ there exist neighborhoods $U_f\subset \KK$ of $f$ and $O_x\subset X$ of $x$ such that $U_f(O_x):=\{g(y):g\in U_f,\;y\in O_x\}\subset O_{f(x)}$.

Following \cite{BG}, a  Tychonoff (Hausdorff) space $X$ is called an {\em Ascoli space} if each compact subset $\KK$ of $\CC(X)$ is evenly continuous. In other words, $X$ is Ascoli if and only if the compact-open topology of $\CC(X)$ is Ascoli in the sense of \cite[p.45]{mcoy}.

It is easy to see that a space $X$ is Ascoli if and only if the canonical valuation map $X\hookrightarrow \CC(\CC(X))$ is an embedding, see \cite{BG}.  By Ascoli's theorem \cite[3.4.20]{Eng}, each $k$-space is Ascoli. Moreover, Noble \cite{Noble} proved that any $k_\IR$-space is Ascoli.  We have the following implication
\[
k_\IR\mbox{-space}\Rightarrow \mbox{Ascoli},
\]
and this implication is not reversible (\cite{Banakh-Ascoli}).

The aforementioned results motivate the  following general  question.
\begin{question}
For which spaces $X$ and $Y$ the space $\CC(X,Y)$ is Ascoli?
\end{question}

Below we present the following  partial  answer to this question.

\begin{theorem} \label{t:Ascoli-Ck-Metriz}
For a metrizable space $X$, $\CC(X)$ is Ascoli if and only if $\CC(X)$ is a $k_\IR$-space if and only if $X$ is locally compact.
\end{theorem}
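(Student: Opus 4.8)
The plan is to establish the two nontrivial implications which, together with the implication $k_\IR$-space $\Rightarrow$ Ascoli recorded above, close the cycle: (I) if $X$ is locally compact then $\CC(X)$ is a $k_\IR$-space, and (II) if $\CC(X)$ is Ascoli then $X$ is locally compact. Indeed, (I) gives locally compact $\Rightarrow k_\IR \Rightarrow$ Ascoli, while (II) supplies the only missing arrow Ascoli $\Rightarrow$ locally compact, so that all three conditions become equivalent.

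For (I) I would exploit the structure of locally compact metrizable spaces. Such an $X$ is paracompact, hence splits as a topological sum $X=\bigoplus_{i\in I}X_i$ of clopen, $\sigma$-compact, locally compact (thus hemicompact) metrizable pieces $X_i$. Then $\CC(X)\cong\prod_{i\in I}\CC(X_i)$, and each factor $\CC(X_i)$ is a Fr\'echet space because $X_i$ is hemicompact. It then remains to invoke the theorem of Noble that an arbitrary product of metrizable (indeed Fr\'echet) spaces is a $k_\IR$-space; this yields that $\CC(X)$ is a $k_\IR$-space.

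For (II) I would argue the contrapositive and show that if $X$ is metrizable and fails to be locally compact then $\CC(X)$ is not an Ascoli space, i.e. some compact subset of $\CC(\CC(X))$ is not evenly continuous. Non-local-compactness at a point $x_0$ lets me extract from the metric the canonical obstruction: an array $\{x_{n,k}:n,k\in\en\}$ of points of $X$ such that each column $\{x_{n,k}:k\in\en\}$ is closed and discrete in $X$ (hence meets every compact set in a finite set and so escapes to infinity), while the columns cluster at $x_0$, so that every neighbourhood of $x_0$ swallows a tail of all but finitely many columns. Writing $\delta_x\in\CC(\CC(X))$ for the evaluation $\delta_x(\phi)=\phi(x)$, the valuation $x\mapsto\delta_x$ embeds $X$ into $\CC(\CC(X))$ since $X$, being a $k$-space, is Ascoli. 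The bare point masses $\delta_{x_{n,k}}$ do not form a relatively compact family (the array escapes every compact subset of $X$ along its columns), so the witness must instead be built from suitable finitely supported averages of the escaping evaluations $\delta_{x_{n,k}}$, whose closure $\KK$ in $\CC(\CC(X))$ is a compact set of measure-like functionals. Even continuity of $\KK$ then fails at the pair $(x_0,0)$: the columns escape every compact subset of $X$, so the chosen functionals tend to $0$ uniformly on compacta, yet they keep their mass arbitrarily close to $x_0$, which is exactly the obstruction to joint continuity of the bidual evaluation on $\CC(X)\times\KK$.

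The main obstacle is the compactness of $\KK$. Since $\CC(X)$ is not first countable, the failure of even continuity is genuinely a net phenomenon and $\KK$ is necessarily non-metrizable, so it cannot be produced as a convergent sequence; moreover the classical Ascoli theorem (a set is compact in a function space iff it is closed, pointwise bounded and evenly continuous) is unavailable here, because it requires the domain $\CC(X)$ to be a $k$-space, which is precisely what is failing. I therefore expect to have to verify the compactness of $\KK$ directly, by a Grothendieck-type argument controlling the chosen averages simultaneously on all compacta of $\CC(X)$ and identifying the compact-open cluster points as genuine elements of $\CC(\CC(X))$. This compactness step, where the measure-theoretic choice of the averages and the escaping-column geometry are combined, is the technical heart of the proof; the remaining verifications are routine bookkeeping with the compact-open topology.
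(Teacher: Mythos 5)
Your implication (I) is correct and coincides with the paper's own route (Theorem \ref{t:Ascoli-Parac}, (i)$\Rightarrow$(iii)): split the locally compact metrizable $X$ into a topological sum of $\sigma$-compact, locally compact metrizable pieces $X_i$ (which are then separable, so each $\CC(X_i)$ is Polish), write $\CC(X)=\prod_i\CC(X_i)$, and invoke Noble's product theorem. The genuine gap is in implication (II), i.e.\ Ascoli $\Rightarrow$ locally compact, where your text stops exactly at the step that constitutes the entire proof: you never define the ``finitely supported averages'', never exhibit the compact set $\KK\subset\CC(\CC(X))$, and defer its compactness to an unspecified ``Grothendieck-type argument'' that you yourself identify as the technical heart. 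As written, (II) is a plan to have an idea rather than an argument.

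Moreover, the two beliefs guiding your intended construction are wrong, and they steer you away from the proof that works (Propositions \ref{p:Ascoli-sufficient} and \ref{p:Ascoli-Pol}). First, $\KK$ need not be non-metrizable: in the paper it is a null sequence together with its limit. One constructs inside $\CC(X)$ a countable set $A=\{f_{q,p}\}$ of bump functions on $X$ (with $f_{q,p}(x_{q,p})=1$, $f_{q,p}(x_0)=1/p$, $f_{q,p}(x_{q,q})=0$) and basic open sets $U_{q,p}\subset\CC(X)$ such that (a) $f_{q,p}\in U_{q,p}$, (b) every compact subset of $\CC(X)$ meets only finitely many $U_{q,p}$, and (c) $\mathbf{0}$ is a cluster point of $A$; then $\KK$ consists of Urysohn functions $F_{q,p}\colon\CC(X)\to[0,1]$ with $F_{q,p}(f_{q,p})=1$ and $F_{q,p}=0$ off $U_{q,p}$, together with the zero function. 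By (b), every compact-open neighborhood of the zero function contains all but finitely many $F_{q,p}$, so $\KK$ is compact for free --- no Ascoli-type compactness criterion on $\CC(\CC(X))$ is needed --- while (a) and (c) kill even continuity at the zero function of $\KK$ and the point $\mathbf{0}\in\CC(X)$. The genuinely non-metrizable ``net phenomenon'' lives in $A$, which clusters at $\mathbf{0}$ but (by (b)) contains no nontrivial sequence converging to $\mathbf{0}$, not in $\KK$. Second, your mechanism for the failure of even continuity is self-defeating for measure-like functionals: functionals keeping positive mass near $x_0$ cannot tend to $0$ uniformly on compacta of $\CC(X)$ (test them on the constant function $1$), and functionals carrying mass on the escaping columns cannot either, because a singleton $\{f\}$ is compact in $\CC(X)$ and, the columns being closed and discrete, Tietze's theorem yields $f\in C(X)$ unbounded along them. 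This tension is precisely why the paper uses nonlinear bumps whose supports $U_{q,p}$ are spread out so that (b) holds; and proving (b) --- via the classical Ascoli theorem, which is perfectly available here because it is applied to restrictions of compact subsets of $\CC(X)$ to two suitable compact sets $Z_1,Z_2\subset X$, giving equicontinuity at $x_0$ --- is where the real work of the theorem lies. None of this appears in your proposal.
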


Corson \cite{Corson} started a systematic study of various topological properties of the weak topology of Banach spaces.  The famous Kaplansky Theorem states that a normed  space $E$ in the weak topology has countable tightness; for further results see \cite{edgar,GKZ}.  Schl\"{u}chtermann and Wheeler \cite{S-W} showed that an infinite-dimensional Banach space is never a $k$-space in the weak topology. We strengthen  this result as follows.
\begin{theorem} \label{t:Ascoli-Banach-weak}
A Banach space  $E$ in the weak topology is Ascoli if and only if $E$ is finite-dimensional.
\end{theorem}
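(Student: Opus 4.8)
The plan is to prove the nontrivial implication by contraposition: assuming $\dim E=\infty$, I will exhibit a compact set $\KK\subseteq\CC((E,w))$ that is not evenly continuous, where $(E,w)$ denotes $E$ with its weak topology; this shows $(E,w)$ is not Ascoli. The easy direction is immediate: if $E$ is finite-dimensional then the weak and norm topologies coincide, so $(E,w)$ is metrizable, hence a $k$-space, hence Ascoli by the implication chain recorded in the introduction.

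For the converse I will check that even continuity fails at the point $x_0=0$, witnessed by the zero function $\mathbf 0\in\KK$. Since $\dim E^*=\infty$, choose norm-one, linearly independent functionals $g_n\in E^*$ and scalars $t_n\to\infty$ (say $t_n=n$). Let $\theta_n\colon\IR\to[0,1]$ be the tent peaked at $t_n$, namely $\theta_n(s)=\max(0,1-|s-t_n|)$, and put $F_n:=\theta_n\circ g_n$. Each $F_n$ is weakly continuous, $F_n(0)=0$, and $F_n$ attains the value $1$ on the slab $g_n^{-1}(t_n)$. I claim that $\KK:=\{F_n:n\in\en\}\cup\{\mathbf 0\}$ is the required set.

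First, $\KK$ is compact in $\CC((E,w))$. Every weakly compact subset $W$ of $E$ is norm bounded, say $W\subseteq R\ball$; since $|g_n(x)|\le R$ there while the peak of $\theta_n$ sits at $t_n\to\infty$, we get $F_n\equiv 0$ on $W$ for all large $n$. Thus $F_n\to\mathbf 0$ uniformly on weakly compact sets, i.e.\ in the compact-open topology, so $\KK$ is a convergent sequence together with its limit and hence compact. Second, $\KK$ is not evenly continuous at $0$. Fix the neighbourhood $O:=(-1/2,1/2)$ of $\mathbf 0(0)=0$. Any compact-open neighbourhood of $\mathbf 0$ inside $\KK$ contains a tail $\{F_n:n\ge N\}$. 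Given any weak neighbourhood $U$ of $0$, shrink it to a basic one $U=\{x:|h_1(x)|<\delta,\dots,|h_k(x)|<\delta\}$ and set $Y:=\bigcap_{j=1}^k\ker h_j$, a subspace of finite codimension. As the $g_n$ are linearly independent, only finitely many of them lie in $\mathrm{span}(h_1,\dots,h_k)$; choose $n\ge N$ with $g_n\notin\mathrm{span}(h_j)$, so $g_n$ restricted to $Y$ is a nonzero linear functional and hence surjects onto $\IR$. Pick $y\in Y\subseteq U$ with $g_n(y)=t_n$; then $F_n(y)=1\notin O$. Since $N$ and $U$ were arbitrary, no pair of neighbourhoods can verify even continuity of $\KK$ at $0$, so $(E,w)$ is not Ascoli.

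The main point, and the step to get right, is the tension between the two demands on $\KK$: compactness in $\CC((E,w))$ forces the functions to be concentrated \emph{near infinity} (this is exactly why the peaks $t_n$ must diverge, and why one invokes that weakly compact sets are norm bounded), whereas the failure of even continuity needs the superlevel sets $\{F_n\ge 1/2\}$ to accumulate weakly at $0$. These coexist precisely because a weak neighbourhood of $0$, unlike a weakly compact set, contains an entire finite-codimension subspace, on which any functional outside a fixed finite-dimensional span is unbounded. I expect the only delicate bookkeeping to be the verification that $\KK$ is genuinely compact in the compact-open topology (not merely pointwise convergent), for which the norm-boundedness of weakly compact sets is essential; the remaining ingredients are elementary linear algebra. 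I note that this argument is uniform over all infinite-dimensional $E$ and, consistently with the earlier ball result, the witnessing functions vanish on every ball, which is why the restriction to $B_w$ can be Ascoli even though $(E,w)$ is not.
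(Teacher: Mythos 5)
Your proof is correct, and it takes a genuinely different route from the paper's. The paper obtains Theorem \ref{t:Ascoli-Banach-weak} from Proposition \ref{p:Ascoli-normed-weak}, whose proof feeds a configuration of points and weakly open sets into the general criterion of Proposition \ref{p:Ascoli-sufficient}: countable fan tightness of $E_w$ (Fact \ref{t:Weak-Fan-tight}, imported from $C_p$-theory via the embedding of $E_w$ into $C_p((E',\sigma(E',E)))$) plus the fact that $0\in\overline{nS_E}^{\,w}$ yields finite sets $F_n\subset nS_E$ whose union $A$ still clusters weakly at $0$; Hahn--Banach gives each $a\in F_n$ a weakly open $U_a$ missing $(n-\tfrac12)B$, and since weakly compact sets are norm bounded each of them meets only finitely many $U_a$, so Proposition \ref{p:Ascoli-sufficient} produces the compact, non-evenly-continuous family of bump functions supported in the $U_a$. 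You instead exhibit the witnessing compact set explicitly, as tents composed with linearly independent norm-one functionals peaked at heights $t_n\to\infty$: compactness again rests on norm-boundedness of weakly compact sets (your $F_n$ are eventually identically zero on each such set), but the failure of even continuity at $0$ is pure linear algebra --- a functional outside $\mathrm{span}(h_1,\dots,h_k)$ is nonzero, hence surjective, on $\bigcap_{j\le k}\ker h_j$, and every weak neighbourhood of $0$ contains such a finite-codimension subspace. In effect you bypass both nontrivial inputs of the paper (fan tightness and the weak closure of spheres): those were needed precisely to pre-select a fixed countable point set clustering at $0$ that satisfies condition (ii) of Proposition \ref{p:Ascoli-sufficient}, whereas your witnessing point $y$ is allowed to depend on the weak neighbourhood being tested, so the unbounded affine hyperplanes $\{F_n=1\}$ can serve in place of points. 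What the paper's route buys is reusable machinery (Proposition \ref{p:Ascoli-sufficient} and Fact \ref{t:Weak-Fan-tight} are also used for $\CC(X)$ and for the $\ell_1$-ball in Proposition \ref{p:Ascoli-L1-weak}); what yours buys is a shorter, self-contained and more elementary argument, and your closing observation --- that your functions vanish on every ball, so the construction cannot descend to $B_w$ --- correctly explains its consistency with Theorem \ref{t:Ascoli-Main}.
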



Below we generalize Theorem \ref{t:Ascoli-Banach-weak} to an interesting class of Fr\'{e}chet locally convex spaces, i.e. metrizable and complete locally convex space (lcs). We say that a Fr\'echet lcs  $E$ is a \emph{quojection} if it is isomorphic to the projective limit of a sequence of Banach spaces with surjective linking maps or, equivalently,  if  every quotient of $E$ which
admits a continuous norm is a Banach space, see \cite{bellenot}. Obviously a countable product of Banach spaces is a quojection. Moscatelli \cite{moscatelli} gave examples of quojections which are not isomorphic to countable products of Banach spaces.
\begin{theorem} \label{t:Ascoli-quoj-weak}
Let a Fr\'{e}chet lcs $E$ be a quojection. Then $E$ in the weak topology is Ascoli if and only if $E$ is either finite-dimensional or is isomorphic to the product $\mathbb{K}^{\mathbb{N}}$, where $\mathbb{K}\in\{\mathbb{R},\mathbb{C}\}$.
\end{theorem}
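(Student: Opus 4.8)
The plan is to prove both directions by reducing to the Banach space case already settled in Theorem~\ref{t:Ascoli-Banach-weak}, with two auxiliary ingredients: that a topological retract of an Ascoli space is Ascoli, and the structure theory of quojections. For the \emph{sufficiency}, note first that if $E$ is finite dimensional then its weak topology coincides with the norm topology, which is metrizable and hence a $k_\IR$-space, so $E_w$ is Ascoli. If $E\cong\mathbb{K}^{\mathbb{N}}$, I would use that the topological dual of $\mathbb{K}^{\mathbb{N}}$ is $\bigoplus_{\mathbb{N}}\mathbb{K}$, so the weak topology $\sigma(\mathbb{K}^{\mathbb{N}},\bigoplus_{\mathbb{N}}\mathbb{K})$ is the initial topology generated by the coordinate functionals, i.e.\ exactly the product topology. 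Thus $E_w$ is again metrizable, hence a $k_\IR$-space, hence Ascoli by the implication $k_\IR\Rightarrow$ Ascoli recorded above.

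Before the converse I would record two facts. First, if $r\colon X\to Y$ is a retraction, with $i\colon Y\to X$ satisfying $r\circ i=\mathrm{id}_Y$, and $X$ is Ascoli, then $Y$ is Ascoli: given a compact $\KK\subset\CC(Y)$, the injective continuous map $r^{*}\colon f\mapsto f\circ r$ sends $\KK$ homeomorphically onto a compact subset of $\CC(X)$, and even continuity of $r^{*}(\KK)$ at $i(y)$ (available since $X$ is Ascoli) transfers, via $f(y)=(r^{*}f)(i(y))$ and $r\circ i=\mathrm{id}$, to even continuity of $\KK$ at $y$. Second, if $B$ is a complemented subspace of a Fréchet space $E$ with projection $P\colon E\to B$, then both $P$ and the inclusion are weakly continuous and $\sigma(E,E')|_{B}=\sigma(B,B')$ by Hahn--Banach, so $B_w$ is a topological retract of $E_w$; combining the two facts, if $E_w$ is Ascoli then so is $B_w$.

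For the \emph{necessity}, assume $E_w$ is Ascoli. If $E$ is normable it is a Banach space, and Theorem~\ref{t:Ascoli-Banach-weak} forces $E$ to be finite dimensional. So suppose $E$ is non-normable and, for contradiction, that $E\not\cong\mathbb{K}^{\mathbb{N}}$. Writing $E$ as a reduced projective limit of Banach spaces $X_n$ with surjective linking maps, the local Banach spaces cannot all be finite dimensional: a projective limit of finite-dimensional spaces with surjective linking maps stabilizes (giving a finite-dimensional $E$) or has dimensions tending to infinity (giving $E\cong\mathbb{K}^{\mathbb{N}}$), both excluded. Hence some $X_N$ is an infinite-dimensional Banach space, and I would extract from this a \emph{complemented} infinite-dimensional Banach subspace $B\subset E$. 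Then by the two lemmas $B_w$ is a retract of the Ascoli space $E_w$, hence Ascoli, contradicting Theorem~\ref{t:Ascoli-Banach-weak} since $B$ is an infinite-dimensional Banach space. Therefore $E\cong\mathbb{K}^{\mathbb{N}}$.

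The main obstacle is precisely this structural claim: a non-normable quojection not isomorphic to $\mathbb{K}^{\mathbb{N}}$ must contain a \emph{complemented} infinite-dimensional Banach subspace. The first infinite-dimensional local Banach space $X_N$ is readily seen to be a quotient of $E$ (the canonical projections of a quojection onto its local Banach spaces are surjective), so the genuine difficulty is to split this quotient, i.e.\ to produce a continuous linear right inverse $X_N\to E$. This is where the defining property of a quojection --- surjectivity of all linking maps, together with the open mapping theorem --- should be used to lift a basis-type decomposition of $X_N$ coherently through the projective limit; I expect to obtain the splitting either by a direct telescoping/lifting argument or by invoking the known structure theory of quojections (\cite{bellenot,moscatelli}). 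Verifying this splitting, and checking it is compatible with passing to weak topologies, is the technically demanding part; the remaining reductions are routine given Theorem~\ref{t:Ascoli-Banach-weak} and the retract lemma.
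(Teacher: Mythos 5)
Your sufficiency argument, your retract lemma, and the reduction of the normable case to Theorem~\ref{t:Ascoli-Banach-weak} are all correct. The genuine gap is exactly where you flag it: the claim that a non-normable quojection $E\not\cong\mathbb{K}^{\mathbb{N}}$ contains a \emph{complemented} infinite-dimensional Banach subspace. You do not prove this, and your proposed repair (lifting a basis-type decomposition through the projective limit using surjectivity of the linking maps and the open mapping theorem) runs into a fundamental obstruction: surjectivity lets you lift individual vectors, but it provides no continuous \emph{linear} right inverse, and quotient maps of Banach spaces generally admit none. For instance, a quotient map $\ell_1\to\ell_2$ does not split, since every infinite-dimensional complemented subspace of $\ell_1$ is isomorphic to $\ell_1$; non-split surjections of exactly this kind are the raw material of Moscatelli's nontrivial quojections \cite{moscatelli}. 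So producing a section $X_N\to E$, or even just some complemented infinite-dimensional Banach subspace, is a substantial structural assertion that you neither prove nor cite, and the proof is incomplete at its central step.

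The paper avoids this entirely by transferring the Ascoli property \emph{forward} along quotient maps instead of \emph{backward} along sections. By Proposition~\ref{p:Ascoli-Quotient}, the Ascoli property is inherited by images under open continuous maps onto regular spaces. The quotient map $q_n:E\to E_n:=E/p_n^{-1}(0)$ remains continuous and open when both sides carry their weak topologies (since $(E/N)'=N^{\perp}$, the weak topology of the quotient is the quotient of the weak topology), and $E_n$ with the quotient topology is a Banach space precisely because $E$ is a quojection, by \cite[Proposition 3]{bellenot}. Hence each $(E_n)_w$ is Ascoli, so each $E_n$ is finite-dimensional by Proposition~\ref{p:Ascoli-normed-weak}; then $E$ embeds as a closed subspace of $\prod_n E_n\cong\mathbb{K}^{\mathbb{N}}$, whence $E\cong\mathbb{K}^{\mathbb{N}}$ by \cite[Corollary 2.6.5]{bonet}. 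No complementation or splitting is needed---this is the step you should replace: the infinite-dimensional local Banach space $X_N$ you produce is already an open continuous weak image of $E_w$, and that alone contradicts Theorem~\ref{t:Ascoli-Banach-weak}.
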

 Since every Fr\'{e}chet lcs $\CC(X)$ is a quojection, see the survey \cite{bierstedt}, Theorem \ref{t:Ascoli-quoj-weak} yields the following
\begin{corollary}\label{c:Ascoli-Ck(X)-quoj}
For a Fr\'{e}chet lcs  $\CC(X)$,   the space $\CC(X)$ in the weak topology is Ascoli if and only if  $X$ is countable and discrete.
\end{corollary}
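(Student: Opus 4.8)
The plan is to deduce the corollary from Theorem \ref{t:Ascoli-quoj-weak} and then translate its conclusion into a condition on $X$. Since the Fr\'echet lcs $\CC(X)$ is a quojection by \cite{bierstedt}, Theorem \ref{t:Ascoli-quoj-weak} applies with $\mathbb{K}=\IR$, so $\CC(X)$ endowed with the weak topology is Ascoli if and only if $\CC(X)$ is either finite-dimensional or isomorphic to $\IR^{\NN}$. It therefore suffices to show that this dichotomy on $\CC(X)$ holds exactly when $X$ is countable and discrete. The direction $(\Leftarrow)$ is a direct computation: if $X$ is countable and discrete then every compact subset of $X$ is finite, the compact-open topology coincides with the topology of pointwise convergence, and $\CC(X)=\IR^{X}$ carries the product topology; hence $\CC(X)$ is $\IR^{n}$ when $X$ has $n$ points and is isomorphic to $\IR^{\NN}$ when $X$ is countably infinite, and in either case $\CC(X)$ with the weak topology is Ascoli by the above.

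For the converse I would argue through the Montel property, which is an isomorphic invariant shared by every finite-dimensional space and by $\IR^{\NN}$ (a countable product of finite-dimensional, hence Montel, Fr\'echet spaces). Assuming $\CC(X)$ is finite-dimensional or isomorphic to $\IR^{\NN}$, it is in particular a metrizable, complete and semi-Montel lcs, so $X$ is a hemicompact $k_\IR$-space (\cite{mcoy}). The key step is to show that semi-Montelness forces every compact subset $K$ of $X$ to be finite. Suppose some $K$ is infinite; then $K$ has a non-isolated point $x_0$. The global unit ball $B=\{f\in C(X):\abs{f}\le 1\}$ is bounded in $\CC(X)$, yet its restriction $B\uhr K$ fails to be equicontinuous at $x_0$: for every neighbourhood $U$ of $x_0$ one finds $y\in K\cap U$ with $y\ne x_0$, and by complete regularity of $X$ a function $f\in B$ with $f(x_0)=0$ and $f(y)=1$. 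By the Ascoli theorem $B\uhr K$ is not relatively compact in $C(K)$, and since the restriction map $\CC(X)\to C(K)$ is continuous, $B$ is not relatively compact in $\CC(X)$, contradicting semi-Montelness. Hence every compact subset of $X$ is finite. Hemicompactness then gives $X=\bigcup_n K_n$ with each $K_n$ finite, so $X$ is countable, and the $k_\IR$-property forces $X$ to be discrete, because every real function on $X$ restricts continuously to each (finite, hence discrete) compact subset and is therefore continuous; in particular every characteristic function of a point is continuous.

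I expect the finiteness-of-compacta step to be the main obstacle, since it is where a functional-analytic fact (an infinite-dimensional $C(K)$ is not semi-Montel) must be converted into a topological statement about $X$; the points needing care are the choice of a non-isolated point of $K$, the use of complete regularity to witness the failure of equicontinuity without invoking any extension/$C^*$-embedding theorem, and the continuity of the restriction map for the compact-open topology. One could alternatively shorten this by citing the classical characterization of Montel function spaces, namely that $\CC(X)$ is Montel precisely when every compact subset of $X$ is finite (\cite{mcoy}); I would present the self-contained version above. Combining the two directions yields that $\CC(X)$ with the weak topology is Ascoli if and only if $X$ is countable and discrete, which is the assertion of the corollary.
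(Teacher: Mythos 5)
Your proof is correct, and while it opens the same way as the paper's (both invoke the quojection property of $\CC(X)$ and reduce to the dichotomy of Theorem \ref{t:Ascoli-quoj-weak}), the translation of that dichotomy into ``$X$ is countable and discrete'' is done by a genuinely different argument. The paper's proof is much shorter but leans on a strong cited fact: since $\IR^{\NN}$ admits no strictly weaker Hausdorff locally convex topology (\cite[Corollary 2.6.5]{bonet}), the pointwise topology must coincide with the compact-open one, so $\CC(X)=C_p(X)=\IR^{\NN}$, whence $X$ is countable (metrizability of $C_p(X)$) and discrete (completeness of $C_p(X)$); it also silently subsumes the finite-dimensional case. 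You instead exploit that both $\IR^n$ and $\IR^{\NN}$ are semi-Montel, run an Arzel\`{a}--Ascoli argument (failure of equicontinuity of the uniform unit ball at a non-isolated point of an infinite compact set) to show every compact subset of $X$ is finite, and then finish with the standard facts that $\CC(X)$ Fr\'{e}chet forces $X$ to be hemicompact (giving countability) and a $k_\IR$-space (giving discreteness via continuity of characteristic functions). What your route buys is self-containedness and uniform treatment of both branches of the dichotomy: you need only classical Arzel\`{a}--Ascoli and the metrizability/completeness characterizations from \cite{mcoy}, not the minimality theorem for the topology of $\IR^{\NN}$; the cost is length. One small caution: your parenthetical alternative, that $\CC(X)$ is Montel \emph{precisely} when all compacta of $X$ are finite, is safe under the standing hypothesis that $\CC(X)$ is Fr\'{e}chet, but as a characterization for arbitrary Tychonoff $X$ it should not be quoted without care (relative compactness in $C_p(X)$ of pointwise bounded sets is not automatic); your main argument, however, uses only the implication semi-Montel $\Rightarrow$ compacta finite, which you prove directly.
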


Let $E$ be a Banach space; denote by $B_w$ the closed unit ball $B=B_E$ in $E$ endowed with the weak topology of $E$. Schl\"{u}chtermann and Wheeler \cite{S-W} showed that some topological properties of $B_w$ are closely related to the isomorphic structure of $E$:

\begin{theorem}[\cite{S-W}] \label{t:S-W-1}
The following conditions for a Banach space $E$ are equivalent: {\em (a)} $B_w$ is Fr\'{e}chet--Urysohn; {\em (b)} $B_w$ is sequential; {\em (c)} $B_w$ is a $k$-space; {\em (d)} $E$ contains no isomorphic copy of $\ell_1$.
\end{theorem}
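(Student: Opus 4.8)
The equivalences split into the trivial implications and two substantial ones. The implications (a)$\Rightarrow$(b)$\Rightarrow$(c) are immediate from the general chain
\[
\mbox{Fr\'echet--Urysohn}\Rightarrow\mbox{sequential}\Rightarrow k\mbox{-space}
\]
recorded in the Introduction, so the entire content lies in closing the cycle with (c)$\Rightarrow$(d) and (d)$\Rightarrow$(a). My plan is to treat these two directions by quite different tools: soft descriptive-set-theoretic machinery for the positive direction, and a combinatorial obstruction built from the $\ell_1$ basis for the negative one.

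For (d)$\Rightarrow$(a) I would show that $B_w$ is Fr\'echet--Urysohn whenever $E$ omits $\ell_1$. First, by Kaplansky's theorem the weak topology of $E$ has countable tightness, so it suffices to treat a countable set $\{x_n\}\subseteq B$ and a point $x$ in its weak closure, and to extract a subsequence of $\{x_n\}$ converging weakly to $x$. Passing to the separable subspace $F$ generated by $\{x_n\}$ and $x$ (which again omits $\ell_1$), I would invoke Rosenthal's $\ell_1$ theorem: the bounded sequence $\{x_n\}$ has no subsequence equivalent to the $\ell_1$ basis, hence every subsequence has a further weakly Cauchy subsequence. Consequently the weak$^*$ closure $K$ of $\{x_n\}$ in $F^{**}$ is a Rosenthal compactum, its points being Baire-one functions on the metrizable compact space $(B_{F^*},w^*)$. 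By the Bourgain--Fremlin--Talagrand theorem $K$ is angelic, so the point $x\in K\cap F$ is the limit of a sequence drawn from $\{x_n\}$ in the weak$^*$ topology of $F^{**}$; since $x\in F$, this is precisely weak convergence in $F$, and hence in $E$. Thus $B_w$ is Fr\'echet--Urysohn.

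For (c)$\Rightarrow$(d) I would prove the contrapositive: if $E$ contains an isomorphic copy of $\ell_1$, then $B_w$ is not a $k$-space. Fix a sequence $\{x_n\}\subseteq B$ equivalent to the unit vector basis of $\ell_1$. The key structural observation is that no $\ell_1$ sequence admits a weakly Cauchy subsequence, so by the Eberlein--\v{S}mulian theorem every weakly compact subset of $B$ can contain only finitely many of the relevant vectors. This lets me build a set $A\subseteq B$, formed from $\{x_n\}$ together with suitable convex blocks, whose intersection with each weakly compact subset of $B_w$ is finite, hence closed, so that $A$ is $k$-closed; yet $A$ is arranged to possess a weak cluster point (such as $0$) not belonging to $A$, so that $A$ is not weakly closed. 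The existence of such $A$ witnesses that $B_w$ fails to be a $k$-space, completing the cycle.

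The main obstacle is this last construction. An $\ell_1$ basis by itself may be weakly closed and discrete (for the canonical basis of $\ell_1$ the origin is not even a weak cluster point), so one cannot simply take $A=\{x_n\}$. The delicate point is to enrich $\{x_n\}$, by passing to averages or to a combinatorial family indexed so as to create a genuine new weak cluster point, while preserving the property that the enriched set still meets every weakly compact subset of $B$ in a finite set. Balancing these two requirements, non-closedness in the weak topology against finiteness of traces on weakly compact sets, is where the isomorphic presence of $\ell_1$ is used in full force and constitutes the heart of the argument.
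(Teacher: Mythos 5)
The substantive gap is in (c)$\Rightarrow$(d), which is exactly the direction the paper itself proves (the other implications are quoted from \cite{S-W}). You correctly isolate what is needed --- a set $A$ built from an $\ell_1$-sequence $(x_n)\subseteq B$ that meets every weakly compact subset of $B$ in a finite set, yet has $0$ as a weak cluster point without containing it --- but you do not construct it; you explicitly defer ``the heart of the argument''. Moreover, the one concrete mechanism you suggest, passing to averages or convex blocks of $(x_n)$, provably cannot work: let $y^*$ be the summing functional on the copy $Y\cong\ell_1$ (so $y^*(x_n)=1$ for all $n$) and let $x^*\in E^*$ be a Hahn--Banach extension; then $x^*$ equals $1$ on every convex combination of the $x_n$'s, so $0$ is never a weak cluster point of any family of convex blocks. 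The coefficient patterns must sum to zero, and this is precisely what the paper's construction exploits: in Proposition \ref{p:Ascoli-L1-weak} (following \cite{GKZ}) one takes $A=\{(e_m-e_n)/2 : m<n\}$ in $B_{\ell_1}$. That $0$ is a weak cluster point of this family of \emph{differences} is itself a genuine argument --- a pigeonhole argument over finitely many functionals $\chi_k\in S_{\ell_\infty}$, carried out in part (3) of that proposition --- while the finite-trace property on weakly compact sets follows from the Schur property of $\ell_1$ together with the pairwise $1/5$-separated weak neighborhoods $U_{m,n}$ (parts (2) and (4)). The passage from $\ell_1$ to a general $E$ containing $\ell_1$ is then the remark preceding that proposition: the $k$-space property is inherited by closed subspaces, and a scaled copy of $B_{\ell_1}$ sits as a weakly closed subset of $B_E$ on which the weak topologies agree. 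Without this construction (or an equivalent one), your proof of (c)$\Rightarrow$(d) is a statement of intent rather than a proof.

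Your treatment of (d)$\Rightarrow$(a) --- Kaplansky's theorem to reduce to countable sets, then Rosenthal's $\ell_1$-theorem and Bourgain--Fremlin--Talagrand angelicity applied to the weak$^*$ closure in $F^{**}$ --- is a correct and standard route, which the paper does not reprove (it cites \cite{S-W}). One point of care: to know that the weak$^*$ closure $K$ consists of Baire-one functions on $(B_{F^*},w^*)$ you need the Odell--Rosenthal theorem (every element of $B_{F^{**}}$ is a weak$^*$ limit of a sequence from $B_F$ when $F$ is separable without $\ell_1$), not merely the subsequence extraction you invoke, since $K$ contains many points that are not subsequential limits of $(x_n)$.
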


Therefore it  seems to be natural to verify  whether there exists a Banach space $E$ containing  a copy of $\ell_{1}$ and such that $B_{w}$ is Ascoli or a $k_\IR$-space. We answer such a question in the negative,  by proving  the following extension of Theorem \ref{t:S-W-1}. 

\begin{theorem}\label{t:Ascoli-Main}
Let $E$ be a Banach space and $B_{w}$ its closed unit ball with the weak topology. Then the following assertions are equivalent:
\begin{enumerate}
\item[{\rm (i)}]  $B_{w}$ is an Ascoli space;
\item[{\rm (ii)}]  $B_{w}$ is a $k_\IR$-space;
\item[{\rm (iii)}]  every sequentially continuous real-valued map on $B_{w}$ is continuous;
\item[{\rm (iv)}]  $E$ does not contain a copy of $\ell_{1}$.
\end{enumerate}
\end{theorem}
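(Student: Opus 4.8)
The plan is to establish all four equivalences by closing two cycles around condition (iv), which by Theorem \ref{t:S-W-1} is the strongest of the four. The ``soft'' implications are $(iv)\Rightarrow(ii)\Rightarrow(i)$ and $(iv)\Rightarrow(iii)$. Indeed, if $E$ contains no copy of $\ell_1$ then Theorem \ref{t:S-W-1} makes $B_w$ a $k$-space, hence a $k_\IR$-space, which is (ii); the general implication $k_\IR$-space $\Rightarrow$ Ascoli (Noble) gives $(ii)\Rightarrow(i)$; and a $k$-space is sequential, and on a sequential space every sequentially continuous real-valued map is continuous, so $(iv)\Rightarrow(iii)$. It then remains to prove the two converses $(i)\Rightarrow(iv)$ and $(iii)\Rightarrow(iv)$, which together with the soft implications close the cycles $(iv)\Rightarrow(ii)\Rightarrow(i)\Rightarrow(iv)$ and $(iv)\Rightarrow(iii)\Rightarrow(iv)$. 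Both converses I would prove by contraposition, so the whole burden is the following: \emph{assuming $\ell_1\subseteq E$, construct (a) a bounded sequentially continuous function on $B_w$ that is not continuous, and (b) a compact subset of $C_k(B_w)$ that is not evenly continuous.}

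For the setup I fix, via Rosenthal's $\ell_1$-theorem, a bounded sequence $(e_n)$ in $E$ equivalent to the unit vector basis of $\ell_1$, spanning a closed copy $Y\cong\ell_1$; since the restriction map $E^*\to Y^*=\ell_\infty$ is onto, the weak topology of $E$ restricts on $Y$ to its own weak topology, and I may use the biorthogonal functionals $e_n^*\in Y^*$. The probabilistic device is to put the coin-tossing measure $P$ on $\Omega=\{-1,1\}^\NN$: each $\omega$ yields a functional $g_\omega=\sum_n\omega_n e_n^*$ of norm $\le1$ on $Y$, which I extend arbitrarily to $\tilde g_\omega\in E^*$ by Hahn--Banach. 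For (a) I set $\phi(x)=\int_\Omega\cos\tilde g_\omega(x)\,dP(\omega)$. Dominated convergence shows $\phi$ is sequentially continuous on $B_w$, because each $\tilde g_\omega$ is weakly continuous. On $Y$ the extensions are irrelevant and independence of the signs gives $\phi(x)=\prod_n\cos x_n$; using that $0$ lies in the weak closure of the unit sphere of $Y$, I would produce a net in $B_Y$ converging weakly to $0$ along which $\phi$ stays bounded away from $\phi(0)=1$ --- concretely, balancing the mass on two far-out coordinates on which any prescribed finite family of test functionals nearly agree (by Bolzano--Weierstrass) and randomising the rest. Thus $\phi$ is discontinuous, refuting (iii); note that $\phi$ is automatically continuous on every weakly compact, hence (Eberlein--\v{S}mulian) angelic, subset, so $\phi$ also shows directly that $B_w$ is not a $k_\IR$-space.

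For (b) the candidate is $\KK=\{\cos\circ\,\tilde g_\omega:\omega\in\Omega\}\subseteq C_k(B_w)$, and the mechanism is a tension between the two topologies on the $\ell_1$-copy. On one hand, $\KK$ should be compact: uniform convergence of $\omega\mapsto\cos\tilde g_\omega$ on a weakly compact set reduces to uniform summability of the coordinates, and on $Y$ this is free, because $\ell_1$ has the Schur property, so weakly compact subsets of $Y$ are norm compact and hence uniformly summable. On the other hand, $\KK$ fails even continuity at $0$: choosing a net $x_\alpha\to0$ weakly in $B_Y$ with $\|x_\alpha\|_1=1$ and $\omega_\alpha=\operatorname{sign}(x_\alpha)$ gives $\tilde g_{\omega_\alpha}(x_\alpha)=\|x_\alpha\|_1=1$, so along a subnet with $\omega_\alpha\to\bar\omega$ the evaluations $\cos\tilde g_{\omega_\alpha}(x_\alpha)=\cos1$ do not converge to $\cos\tilde g_{\bar\omega}(0)=1$. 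The point is that a weakly null net of unit vectors escapes every norm compact set, hence is invisible to the compact-open topology that makes $\KK$ compact but visible to joint evaluation; this is exactly where the Schur property of $\ell_1$ is doing the work. This already shows that $(B_Y,w)$ is not Ascoli.

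The main obstacle is the passage from the copy $(B_Y,w)$ to the whole ball $B_w$ of $E$, since the Ascoli property is not hereditary with respect to closed subspaces. The difficulty is concentrated in the compactness of $\KK$ as a subset of $C_k(B_w)$: this forces the extensions $\tilde g_\omega$ to be chosen depending continuously on $\omega$ and, more seriously, requires uniform summability of the $\ell_1$-coordinates not merely on $Y$ but on arbitrary weakly compact subsets of $B_w$ --- equivalently, that such sets be uniformly integrable against the functionals $e_n^*$. This is precisely the measure-theoretic content signalled in the abstract: the presence of $\ell_1$ in $E$, together with the Schur-type behaviour it induces, is what I expect to deliver the required uniform integrability and a $w^*$-measurable, $\omega$-continuous selection of the extensions (using the injectivity of $\ell_\infty=Y^*$). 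Checking that a single choice of $\{\tilde g_\omega\}$ simultaneously keeps $\KK$ compact in $C_k(B_w)$ and preserves the failure of even continuity at $0$ is the technical heart of the proof and the step I expect to demand the most care.
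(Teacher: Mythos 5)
Your logical skeleton is sound, and your construction does prove that the ball of the copy $Y\cong\ell_1$ is not Ascoli in its weak topology (the paper's Proposition \ref{p:Ascoli-L1-weak}, reached by a pleasant probabilistic route). Note in passing that your part (a) is redundant: once $B_w$ is known not to be Ascoli it is not a $k_\IR$-space, and by Lemma \ref{l:Ascoli-sequential} any function on $B_w$ that is continuous on all weakly compact sets is sequentially continuous; this is why the paper needs only the single hard implication (i)$\Rightarrow$(iv) and closes the cycle (i)$\Rightarrow$(iv)$\Rightarrow$(iii)$\Rightarrow$(ii)$\Rightarrow$(i).

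The genuine gap is exactly the step you yourself call the technical heart and leave as an expectation: the passage from $Y$ to $E$. What your scheme requires --- an ``$\omega$-continuous selection of the extensions'', i.e.\ uniformly bounded $\tilde g_\omega\in E^*$ extending $g_\omega$ with $\omega\mapsto\tilde g_\omega(x)$ continuous for every $x\in B_E$ (needed both for compactness of $\KK$ in $\CC(B_w)$ and for your subnet argument, and its measurable version already for $\phi$ in (a) to be well defined off $Y$) --- is precisely the same thing as a bounded operator $\widetilde T\colon E\to C(\Omega)$, $\widetilde Tx=(\omega\mapsto\tilde g_\omega(x))$, where $\Omega=\{-1,1\}^{\NN}$, extending the canonical isometry $T\colon\ell_1\to C(\Omega)$, $Te_n=\pi_n$. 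Such an operator need not exist, because $C(\Omega)$ is \emph{not} an injective Banach space: take $E=\ell_\infty\supset\ell_1$ (a space the theorem must cover); any extension $\widetilde T$ fixes a copy of $\ell_1$, hence is not weakly compact, hence by Rosenthal's theorem on operators from $\ell_\infty$ it fixes a copy of $\ell_\infty$, which is impossible since $C(\Omega)$ is separable. The auxiliary ideas you invoke do not repair this: injectivity of $\ell_\infty=Y^*$ yields simultaneous bounded extensions $\tilde e_n^*$ of the coordinate functionals, but for $x\notin Y$ the sequence $(\tilde e_n^*(x))_n$ is merely bounded, not summable, so the sums $\sum_n\omega_n\tilde e_n^*(x)$ diverge --- which is also why ``uniform summability of the $\ell_1$-coordinates on arbitrary weakly compact subsets of $B_w$'' is simply unavailable. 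The paper's Proposition \ref{p:Ascoli-Main} circumvents this very obstruction by inverting the roles: instead of extending the random functionals, it realizes the $\ell_1$-basis as a stochastically independent sequence $g_n=\pi_n\circ\Phi$ in $C(\beta\NN)$ (pulling back the cube's coordinates and the product measure through a surjection $\Phi\colon\beta\NN\to H$, Lemmas \ref{image_measure} and \ref{l:Ascoli-Im-mu}), extends the embedding $Y\to C(\beta\NN)$ to $\widetilde T\colon E\to C(\beta\NN)$ using the $1$-injectivity of the \emph{target} $C(\beta\NN)=\ell_\infty$, and replaces evaluations at random signs by the correlation integrals $\int_{\beta\NN}\widetilde T(x)\cdot g_n\dx\mu$; the uniformity over weakly compact sets that you are missing is then supplied by Eberlein--\v{S}mulian, Egorov, and the Riemann--Lebesgue-type Theorem \ref{t:Ascoli-Talagr} for independent functions, after which Proposition \ref{p:Ascoli-sufficient} converts the configuration $A=\{(x_m-x_n)/2\}$, $U_{m,n}=\widetilde T^{-1}(V_{m,n})\cap dB_E$ into the failure of the Ascoli property for $B_w$. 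Until you produce a substitute for this extension step, your argument establishes only that the closed subspace $(B_Y,w)$ is not Ascoli, which, as you note, does not transfer to $B_w$.
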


The proof of (i)$\Rightarrow$(iv) in Theorem \ref{t:Ascoli-Main}, given in  Proposition \ref{p:Ascoli-Main} below, uses  basic properties  of stochastically independent measurable functions. We also present a result related to Theorem \ref{t:Ascoli-Main} (ii), namely for Banach spaces containing an isomorphic copy of $\ell_1$ we provide, in a sense,  a canonical example of a sequentially continuous but not continuous function on $B_w$. Our construction builds on   measure-theoretic properties of $\ell_1$-sequences of continuous functions, see Example \ref{exa:Ascoli-1} below.

For Fr\'{e}chet lcs we supplement Theorem \ref{t:S-W-1}  by proving  the following theorem.
\begin{theorem} \label{t:Ascoli-E-weak}
For a Fr\'{e}chet lcs $E$ the following conditions are equivalent:
\begin{enumerate}
\item[{\rm (i)}] $E$ contains no isomorphic copy of $\ell_1$;
\item[{\rm (ii)}]  each closed and convex bounded subset of $E$ is Ascoli in the weak topology.
\end{enumerate}
\end{theorem}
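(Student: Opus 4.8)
The plan is to establish the two implications separately, using throughout the elementary chain
\[
\text{(every sequentially continuous real map is continuous)}\ \Longrightarrow\ k_\IR\text{-space}\ \Longrightarrow\ \text{Ascoli},
\]
valid for any Tychonoff space: the second arrow is Noble's theorem recalled in the introduction, and the first holds because a function continuous on every compact set is, in particular, continuous on the compactum formed by a convergent sequence together with its limit, hence sequentially continuous. Writing $w=\sigma(E,E')$, it therefore suffices, for (i)$\Rightarrow$(ii), to show that on each closed convex bounded $C$ every $w$-sequentially continuous $f\colon(C,w)\to\IR$ is $w$-continuous; and for (ii)$\Rightarrow$(i) it is enough to produce, whenever $E$ contains $\ell_1$, a single closed convex bounded set that is not Ascoli.

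For (ii)$\Rightarrow$(i) I argue contrapositively and reduce to the Banach case. Let $T\colon\ell_1\to E$ be an isomorphic embedding. The subspace $T(\ell_1)$ is complete, hence closed, hence (by Mazur) $w$-closed, and by Hahn--Banach every functional in $\ell_1'=\ell_\infty$ extends to $E'$; thus $T$ is a homeomorphism of $(B_{\ell_1},\sigma(\ell_1,\ell_\infty))$ onto $\bigl(T(B_{\ell_1}),w\bigr)$. Consequently $C:=T(B_{\ell_1})$ is convex, bounded, and $w$-closed, yet homeomorphic to the weak unit ball of $\ell_1$, which is \emph{not} Ascoli by Theorem \ref{t:Ascoli-Main} applied to $E=\ell_1$. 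Since the Ascoli property is a topological invariant, this $C$ witnesses the failure of (ii).

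The substance of the theorem is (i)$\Rightarrow$(ii). Assume $E$ has no copy of $\ell_1$, fix a closed convex bounded $C$ and a $w$-sequentially continuous $f\colon(C,w)\to\IR$, and suppose $f$ is discontinuous at some $x_0\in C$. By Kaplansky's theorem (whose convexity-based proof is valid for any metrizable locally convex space) the weak topology has countable tightness, so there is a countable $A\subseteq C$ with $x_0\in\overline{A}^{\,w}$ while $f(A)$ avoids some neighbourhood of $f(x_0)$. Passing to the separable closed subspace $F=\overline{\operatorname{span}}(A\cup\{x_0\})$, on which $w$ induces $\sigma(F,F')$ by Hahn--Banach, I am reduced to producing a \emph{sequence} $a_n\in A$ with $a_n\to x_0$ in $\sigma(F,F')$; sequential continuity then forces $f(a_n)\to f(x_0)$, the desired contradiction.

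The crux, and the only genuinely hard step, is thus the Fr\'echet analogue of the equivalence (a)$\Leftrightarrow$(d) of Theorem \ref{t:S-W-1}: \emph{in a separable Fr\'echet space $F$ without a copy of $\ell_1$ every bounded set is Fr\'echet--Urysohn for $\sigma(F,F')$}. I would prove it by transporting the Banach argument of \cite{S-W} to the projective-limit setting. Fix a decreasing base $(U_k)$ of absolutely convex $0$-neighbourhoods; each polar $U_k^\circ\subseteq F'$ is $\sigma(F',F)$-compact and, by separability, metrizable, with associated seminorm $p_k(x)=\sup_{\phi\in U_k^\circ}|\phi(x)|$. Evaluation embeds $(C,w)$ homeomorphically into $\prod_k\RR^{U_k^\circ}$, and on each factor the family $\{\widehat{a}\upharpoonright U_k^\circ:a\in A\}$ is continuous and uniformly bounded by $\sup_C p_k<\infty$. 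If some subsequence of $(a_j)$ were $\ell_1$-equivalent in the sup-norm on $U_k^\circ$, then by the formula for $p_k$ it would be an $\ell_1$-sequence in $F$; since $F$ has no $\ell_1$, Rosenthal's $\ell_1$-theorem applies on every $U_k^\circ$ and shows that the pointwise closure $K_k$ of $\{\widehat{a}\upharpoonright U_k^\circ\}$ is a Rosenthal compactum, i.e. consists of Baire-class-one functions. By the Bourgain--Fremlin--Talagrand theorem each $K_k$ is angelic; the product $\prod_k K_k$ is then compact and, by the stability of angelic spaces under countable products, angelic, hence Fr\'echet--Urysohn, and so is the subspace into which $\overline{A}^{\,w}$ is carried. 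As $x_0$ lies in this closure, it is the limit of a sequence drawn from $A$, as required. The delicate points are precisely the permanence of angelicity under the passage from the single compacta $U_k^\circ$ to their countable product, and the verification---indicated above---that the hypothesis ``no $\ell_1$ in $F$'' is exactly what feeds Rosenthal's dichotomy on each $U_k^\circ$; I expect the former to be the main obstacle, since the sup-norm over the whole dual $F'=\bigcup_k U_k^\circ$ is not uniformly bounded on $C$ and the factors must genuinely be treated one $k$ at a time.
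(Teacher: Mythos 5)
Your implication (ii)$\Rightarrow$(i) is correct and coincides with what the paper intends: the unit ball of a copy of $\ell_1$, transported by Hahn--Banach and Mazur, is a closed convex bounded set whose weak topology is that of $B_{\ell_1}$, which is not Ascoli by Theorem \ref{t:Ascoli-Main} (or directly by Proposition \ref{p:Ascoli-L1-weak}). For (i)$\Rightarrow$(ii) you depart from the paper: the paper simply concatenates two quoted results, Proposition \ref{ruess} (Ruess: no copy of $\ell_1$ gives weakly Cauchy subsequences of bounded sequences) and Proposition \ref{fre1} (which converts this into: every bounded set is Fr\'echet--Urysohn in the weak topology), after which Fr\'echet--Urysohn trivially implies Ascoli. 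You instead set out to re-prove this Fr\'echet--Urysohn statement from scratch. Your preliminary steps are all sound: the chain (sequentially continuous $\Rightarrow$ continuous) $\Rightarrow$ $k_\IR$ $\Rightarrow$ Ascoli, the reduction via countable tightness (Fact \ref{t:Weak-Fan-tight}) and Hahn--Banach to a separable closed subspace $F$ and a countable bounded set $A$, the identification of the weak topology with pointwise convergence on $\bigcup_k U_k^\circ$, and the observation that a bounded sequence satisfying a lower $\ell_1$-estimate for a single continuous seminorm $p_k$ spans a copy of $\ell_1$ in the complete space $F$, so that Rosenthal's dichotomy applies on each polar.

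The genuine gap is exactly where you feared it: ``stability of angelic spaces under countable products'' is not a theorem --- it is false. For compact spaces angelicity is equivalent to the Fr\'echet--Urysohn property, and by a classical example of P.~Simon there exists a compact Fr\'echet--Urysohn space whose square is not Fr\'echet--Urysohn; so angelicity of the individual factors $K_k$ gives, by itself, nothing for $\prod_k K_k$. What saves your argument is that you should stay inside the class of Rosenthal compacta, which \emph{is} closed under countable products: let $X$ be the topological sum $\bigoplus_k U_k^\circ$, a Polish space; gluing a point $(f_k)_k\in\prod_k K_k$ into the single function $f$ on $X$ with $f|_{U_k^\circ}=f_k$ embeds $\prod_k K_k$ (with the product of the pointwise topologies) homeomorphically onto a compact subset of $B_1(X)$ with the pointwise topology, because a function that is Baire-one on each clopen piece of $X$ is Baire-one on $X$. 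A single application of Bourgain--Fremlin--Talagrand to this one Rosenthal compactum then shows $\prod_k K_k$ is Fr\'echet--Urysohn, and the rest of your proof closes as written. Note that the lack of uniform boundedness of the functions $\widehat a$, $a\in A$, over the whole dual $F'$, which you singled out as the expected obstacle, is immaterial at this step: uniform boundedness is needed only to run Rosenthal's dichotomy factor by factor (where you do have it, with constant $\sup_{a\in C}p_k(a)$), not to form, or apply BFT to, the compact subset of $B_1(X)$. With this repair your argument is a correct, essentially self-contained alternative to the paper's citation of \cite{ruess} and \cite{GKKP}; what the paper's route buys is brevity, while yours makes visible the Rosenthal-compactness mechanism hidden inside those two references.
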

Theorems \ref{t:Ascoli-Main}--\ref{t:Ascoli-E-weak}  heavily depend on  our result stating  that the closed unit ball $B$ of $\ell_1$ in the  weak topology is not an Ascoli space, see  Proposition \ref{p:Ascoli-L1-weak} below.


\section{The Ascoli property for function spaces. Proof of Theorem \ref{t:Ascoli-Ck-Metriz}} \label{sec:2}


We start from the definitions of the following well-known notions. A topological space $X$ is called
\begin{itemize}
\item[$\bullet$] {\em Fr\'{e}chet-Urysohn} if for any cluster point $a\in X$ of a subset $A\subset X$ there is a sequence $\{ a_n\}_{n\in\NN}\subset A$ which converges to $a$;
\item[$\bullet$] {\em sequential} if for each non-closed subset $A\subset X$ there is a sequence $\{a_n\}_{n\in\NN}\subset A$ converging to some point $a\in \bar A\setminus A$;
\item[$\bullet$] a {\em $k$-space} if for each non-closed subset $A\subset X$ there is a compact subset $K\subset X$ such that $A\cap K$ is not closed in $K$;
\item[$\bullet$] a {\em $k_\IR$-space} if  a real-valued function $f$ on $X$ is continuous if and only if its restriction $f|_K$ to any compact subset $K$ of $X$ is continuous.
\end{itemize}

Recall that the family of subsets
\[
[C;\epsilon]:= \{ f\in \CC(X): |f(x)|<\epsilon \; \forall x\in C\},
\]
where $C$ is a compact subset of $X$ and $\epsilon >0$, forms a basis of open neighborhoods at the zero function $\mathbf{0} \in\CC(X)$.
Below we give a simple sufficient condition on a space $X$  not to be Ascoli.
\begin{proposition} \label{p:Ascoli-sufficient}
Assume  a Tychonoff space $X$ admits a  family $\U =\{ U_i : i\in I\}$ of open subsets of $X$, a subset $A=\{ a_i : i\in I\} \subset X$ and a point $z\in X$ such that
\begin{enumerate}
\item[{\rm (i)}] $a_i\in U_i$ for every $i\in I$;
\item[{\rm (ii)}] $\big|\{ i\in I: C\cap U_i\not=\emptyset \}\big| <\infty$  for each compact subset $C$ of $X$;
\item[{\rm (iii)}] $z$ is a cluster point of $A$.
\end{enumerate}
Then $X$ is not an Ascoli space.
\end{proposition}

\begin{proof}
For every $i\in I$, take a continuous function $f_i: X\to [0,1]$ such that $f_i(a_i)=1$ and $f_i(X\setminus U_i)=\{ 0\}$. Set $\KK :=\{ f_i: i\in I\} \cup\{ \mathbf{0}\}$.

We claim that $\KK$ is a compact subset of $\CC(X)$ and $\mathbf{0}$ is a unique cluster point of $\KK$.
Indeed, let $C$ be a compact subset of $X$ and $\epsilon>0$. By (ii), the set $J:=\{ i\in I: C\cap U_i\not=\emptyset\}$ is finite. So, if $i\not\in J$, then $f_i(C)=\{ 0\}$. Hence $f_i\in [C;\epsilon]$ for every $i\in I\setminus J$. This means that $\KK$ is a compact set with the unique cluster point $\mathbf{0}$.

We show that $\KK$ is not evenly continuous considering $\mathbf{0}$, $z$ and $O=(-1/2,1/2)$. By the claim, any neighborhood $U_{\mathbf{0}}\subset \KK$ of $\mathbf{0}$ contains almost all functions $f_i$, and, by (iii), any neighborhood $O_z$ of $z$ contains infinitely many points $a_i$. So, there is $m\in I$ such that $f_m\in U_{\mathbf{0}}$ and $a_m\in O_z$. Since $f_m(a_m)=1$, we obtain that $U_{\mathbf{0}}(O_z)\not\subset O$. Hence $\KK$ is not evenly continuous. Thus $X$ is not Ascoli.
\end{proof}
The next corollary follows also from Proposition 5.11(1) of \cite{BG}.
\begin{corollary}\label{c:Ascoli-sufficient}
Let $X$ be a Tychonoff space with a unique cluster point $z$ and such that every compact subspace of $X$ is finite. Then $X$ is not an Ascoli space.
\end{corollary}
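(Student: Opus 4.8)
The plan is to deduce this directly from Proposition \ref{p:Ascoli-sufficient} by producing the required data $\U=\{U_i:i\in I\}$, $A=\{a_i:i\in I\}$ and the point $z$. The key preliminary observation is that the hypothesis forces a very rigid structure on $X$: if $z$ is the \emph{unique} cluster point of $X$, then every point $x\in X\setminus\{z\}$ must be isolated, since otherwise $x$ would itself be a second cluster point. Consequently $X\setminus\{z\}$ is a discrete (and open) subspace of $X$, and every singleton $\{x\}$ with $x\neq z$ is open.

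With this in hand I would set $I:=X\setminus\{z\}$, and for each $i\in I$ put $a_i:=i$ and $U_i:=\{a_i\}$, so that $A=\{a_i:i\in I\}=X\setminus\{z\}$. Condition (i) of Proposition \ref{p:Ascoli-sufficient} is then immediate, since $a_i\in\{a_i\}=U_i$. For condition (ii), note that for a compact $C\subseteq X$ we have $C\cap U_i\neq\emptyset$ precisely when $a_i\in C$; hence $\{i\in I:C\cap U_i\neq\emptyset\}$ is in bijection with $C\cap(X\setminus\{z\})$, which is finite because by assumption every compact subspace of $X$ is finite. For condition (iii), the point $z$ is a cluster point of $A$: since $z$ is a cluster point of $X$ in a Hausdorff space, every neighborhood of $z$ contains infinitely many points of $X$, all but possibly $z$ itself lying in $A=X\setminus\{z\}$, so $z$ clusters at $A$.

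Having verified (i)--(iii), Proposition \ref{p:Ascoli-sufficient} applies and yields that $X$ is not Ascoli, completing the argument. I do not expect a genuine obstacle here; the whole corollary is essentially a specialization of the Proposition once one records that the uniqueness of the cluster point makes all remaining points isolated. The only point deserving a moment's care is the passage from "$z$ is a cluster point of $X$" to "$z$ is a cluster point of $A$", which uses the Hausdorff property to ensure that neighborhoods of $z$ meet $A$ in infinitely many (in particular nonempty) sets; the verification of (ii) via the finiteness of compact subspaces is the operative step, and it is routine.
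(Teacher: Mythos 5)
Your proof is correct and coincides with the paper's own argument: the paper likewise notes that uniqueness of the cluster point makes every other point isolated, sets $I=A=X\setminus\{z\}$ with $U_x=\{x\}$, and invokes Proposition \ref{p:Ascoli-sufficient}. Your write-up merely spells out the verification of conditions (i)--(iii) that the paper leaves implicit.
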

\begin{proof}
Since every $x\in X$, $x\not= z$, is isolated, we set $I=A=X\setminus\{ z\}$ and $U_x =\{x\}$ for $x\in A$. Now Proposition \ref{p:Ascoli-sufficient}  applies.
\end{proof}


The proof of the next proposition is a modification of the proof of the assertion in Section 5 of \cite{Pol-1974}.
\begin{proposition} \label{p:Ascoli-Pol}
Let $X$ be a first countable paracompact space. If $X$ is not locally compact, then $\CC(X)$  contains a countable family $\U =\{ U_s\}_{s\in\NN}$ of open subsets in $\CC(X)$ and a countable subset $A=\{ a_s\}_{s\in\NN} \subset \CC(X,\II)$ such that
\begin{enumerate}
\item[{\rm (i)}] $a_s\in U_s$ for every $s\in\NN$;
\item[{\rm (ii)}] if $K\subset \CC(X)$ is compact, the set $\{ s: U_s\cap K\}$ is finite;
\item[{\rm (iii)}] the zero function $\mathbf{0}$ is a cluster point of $A$.
\end{enumerate}
In particular, the spaces $\CC(X)$ and $\CC(X,\II)$ are not Ascoli.
\end{proposition}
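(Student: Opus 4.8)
The plan is to deduce the Proposition from Proposition~\ref{p:Ascoli-sufficient} applied not to $X$ but to the function space $\CC(X)$ itself, taking the distinguished point to be $z=\mathbf{0}$. Once a countable family $\U=\{U_s\}_{s\in\NN}$ of open subsets of $\CC(X)$ and a countable set $A=\{a_s\}_{s\in\NN}\subseteq\CC(X,\II)$ with properties (i)--(iii) have been produced, conditions (i)--(iii) of Proposition~\ref{p:Ascoli-sufficient} hold verbatim for the space $\CC(X)$, so $\CC(X)$ is not Ascoli. Since all the $a_s$ already lie in $\CC(X,\II)$ and the Urysohn functions separating $\mathbf{0}$ from the $a_s$ in the proof of Proposition~\ref{p:Ascoli-sufficient} may be chosen with values in $\II$, the same data shows that $\CC(X,\II)$ is not Ascoli. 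Thus everything reduces to the construction of $\U$ and $A$, which is precisely where first countability, paracompactness and the failure of local compactness are spent, following Section~5 of \cite{Pol-1974}.

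First I would extract the combinatorial skeleton. Fix a point $x_0\in X$ with no compact neighborhood; by first countability and regularity (paracompact Hausdorff spaces are normal) choose a decreasing open base $\{W_n\}_{n\in\NN}$ at $x_0$ with $\overline{W_{n+1}}\subseteq W_n$, so that each $\overline{W_n}$ is non-compact. The key structural use of paracompactness is that a closed countably compact subspace is compact; hence each non-compact $\overline{W_n}$ fails to be countably compact and therefore contains an infinite closed discrete subset. This is the source of the ``escape'' needed for (iii). Using these discrete sets together with normality, I would build a double-indexed array of peaks $x^n_k\in W_n$ and a family of pairwise disjoint open supports $O^n_k$, and bump functions $a^n_k\in\CC(X,\II)$ with $a^n_k(x^n_k)=1$, $a^n_k\equiv 0$ off $O^n_k$ and $a^n_k(x_0)=0$, arranged so that: for each fixed $n$ the sequence $(x^n_k)_k$ converges to $x_0$, making $Z_n:=\{x^n_k:k\in\NN\}\cup\{x_0\}$ a compact metrizable subset of $X$; the whole array of peaks is \emph{not} relatively compact (no compact subset of $X$ contains all but finitely many peaks); and the supports accumulate only at $x_0$. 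After re-enumerating $\{(n,k)\}$ as $\{s\}$, property (i) is immediate, and the neighborhoods $U^n_k$ are taken to be basic compact--open neighborhoods of $a^n_k$ adapted to the compact sets $Z_n$ and to the distinguished value at $x_0$.

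The verification of (iii) is the easy half: given a compact $C\subseteq X$, the fact that the peaks are not captured cofinitely by compacta, together with the support structure, yields infinitely many indices $s$ with $\supp a_s\cap C=\emptyset$, hence $\|a_s\|_C=0<\epsilon$; so $\mathbf{0}$ is a cluster point of $A$. The engine for (ii) is the classical Arzel\`a--Ascoli theorem used \emph{on the genuinely compact pieces} $Z_n$: for a compact $K\subseteq\CC(X)$, the restriction map $\CC(X)\to C(Z_n)$ is continuous, so its image is norm-compact in the Banach space $C(Z_n)$, hence equicontinuous, in particular equicontinuous at $x_0\in Z_n$; this bounds by some $k_n$ the number of $k$ for which $U^n_k$ meets $K$, giving row-wise finiteness. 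The main obstacle is the passage across rows and, more fundamentally, the fact that conditions (ii) and (iii) pull in opposite directions: (iii) forces the peaks to escape every compact subset of $X$, while (ii) requires the configuration to be ``equicontinuously trapped'' toward $x_0$ so that every compact subset of $\CC(X)$ sees only finitely many $U_s$. Reconciling these --- designing the supports and the neighborhoods so that no single continuous function (in particular no row-sum or diagonal selection) can lie in infinitely many $U_s$, while the peaks still escape --- is exactly the delicate heart of the argument, and it is here that non-local-compactness at $x_0$ is indispensable; this is the step I expect to require the most care, and it is the point at which the proof genuinely modifies Pol's construction.
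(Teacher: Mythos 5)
Your opening reduction to Proposition~\ref{p:Ascoli-sufficient} and your use of paracompactness (a closed countably compact subspace is compact, so each non-compact closed set contains an infinite closed discrete subset) agree with the paper. But the array you then sketch is not the paper's construction, and as specified it cannot work; the step you explicitly defer (``the delicate heart'') is not a refinement to be filled in later --- it is the entire content of the proof, and your design choices make it impossible to fill. Concretely: you require the peaks to satisfy $x^n_k\in W_n$ with each row $(x^n_k)_k$ converging to $x_0$, and the bumps to vanish at $x_0$ and off pairwise disjoint supports. These demands are inconsistent with your own requirement that the peaks escape compacta: since the $W_n$ shrink to $x_0$, every row and every transversal of your array converges to $x_0$, so $P=\{x^n_k : n,k\in\NN\}\cup\{x_0\}$ is closed and countably compact, hence compact by paracompactness. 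This kills (iii): $P$ is a compact subset of $X$ on which every $a^n_k$ attains the value $1$, so no $a_s$ lies in $[P;1/2]$ and $\mathbf{0}$ is not a cluster point of $A$. If instead you drop the confinement to $W_n$ so that the peaks do escape (as (iii) needs), then (ii) fails: choosing one escaping peak per row (e.g.\ in the metric fan, the first point of each spine, with singleton supports), the corresponding bumps vanish at $x_0$ and off disjoint supports, hence converge to $\mathbf{0}$ uniformly on compact subsets of $X$; then $K=\{a^n_{k_n}: n\in\NN\}\cup\{\mathbf{0}\}$ is compact in $\CC(X)$ and meets $U^n_{k_n}$ for every $n$, no matter how the neighborhoods are chosen, because $a_s\in U_s$ is forced by (i). Your row-wise equicontinuity argument on $Z_n$ cannot detect this column phenomenon.

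What is missing is exactly the mechanism the paper builds in. In the paper the rows are the closed discrete \emph{escaping} sequences $\{x_{j,i}\}_j$ inside non-compact annuli $P_i=\overline{V_i}\setminus V_{i+1}$ (rows do not converge to $x_0$; this, via the discreteness of the families $\{V_{j,i}\}_j$, is what yields (iii)), while proximity to $x_0$ is governed by the row index. Crucially, each function is pinned at \emph{three} points: $f_{q,p}(x_{q,p})=1$, $f_{q,p}(x_0)=1/p\neq 0$, and $f_{q,p}(x_{q,q})=0$, and the neighborhoods $U_{q,p}$ record all three values. Then any compact $K\subset\CC(X)$ meeting infinitely many $U_{q,p}$ violates equicontinuity (the Ascoli theorem) on one of two \emph{fixed} compact subsets of $X$: on $Z_1$ (peaks with $p_i\to\infty$, which do converge to $x_0$) one gets oscillation $\approx 1$ between $x_{q,p}$ and $x_0$, and on $Z_2$ (the lower triangle containing the diagonal points $x_{q,q}\to x_0$) one gets oscillation $\approx 1/p$ between $x_{q,q}$ and $x_0$. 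The nonzero value $1/p$ at $x_0$, together with the pinned zero at the moving diagonal point, is what makes \emph{every} infinite subfamily oscillate on a fixed compact set; bumps that are $0$ at $x_0$ and outside their own supports produce no such oscillation, which is precisely why your cross-row step cannot be completed.
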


\begin{proof}
Suppose for a contradiction that $X$ is not locally compact and let $x_0\in X$ be a point which does not have compact neighborhood. Take open  bases $\{ V'_i\}_{i\in\NN}$ and $\{ W_i\}_{i\in\NN}$ at $x_0$ such that
\[
V'_i \supset \overline{W_i} \supset W_i \supset \overline{V'_{i+1}}, \quad \forall i\in\NN.
\]
Set $P'_i :=\overline{V'_{i}}\setminus V'_{i+1}$, $\forall i\in\NN$. Since none of the sets $V'_i$ is compact, there exists a sequence $k_1 < k_2<\dots$ such that $P'_{k_i}$ is not compact and $k_{i+1}>k_i +1$. Set $P_i =P'_{k_i}$ and $V_i=V'_{k_i}$. Then $\{ P_i\}_{i\in\NN}$ is a sequence of closed, non-compact subsets of $X$, $\{ V_i\}_{i\in\NN}$ is a decreasing open  base at $x_0$ and
\begin{equation} \label{e:Ascoli-Ck-1}
P_i \subset \overline{V_i}\setminus \overline{W_{k_{i +1}}} \; \mbox{ and } \; \overline{V_{i+1}} \subset W_{k_{i +1}}.
\end{equation}

Fix arbitrarily $i\in\NN$. Since $P_i$ is not compact, by \cite[3.1.23]{Eng}, there is a one-to-one sequence $\{ x_{j,i}\}_{j\in\NN} \subset P_i$ which is discrete and closed in $X$. Now the paracompactness of $X$ and (\ref{e:Ascoli-Ck-1}) imply that there exists  an open sequence $\{ V_{j,i}\}_{j\in\NN}$ such that
\begin{equation} \label{e:Ascoli-Ck-2}
x_{j,i} \in V_{j,i}, \;  \mbox{ and } \; V_{j,i} \cap \overline{V_{i+1}} =\emptyset, \forall j\in \NN, \; \mbox{ and } \{ V_{j,i}\}_{j\in\NN} \mbox{ is discrete in } X.
\end{equation}

For every $p,q\in\NN$ such that $1\leq p<q$, choose continuous functions $f_{q,p}: X\to [0,1]$  such that
\begin{equation} \label{e:Ascoli-Ck-3}
f_{q,p} (x_{q,p})=1, \; f_{q,p} (x_{q,q})=0,  \; f_{q,p}(x_0)=1/p \; \mbox{ and } \; f_{q,p}(x)\leq 1/p \mbox{ for } x\not\in V_{q,p}.
\end{equation}
Set $A:=\{ f_{q,p}: 1\leq p<q<\infty \}$ and $\U =\{ U_{q,p}: 1\leq p<q<\infty\}$, where  $U_{q,p}$ is the set of all functions $h\in\CC(X)$ satisfying the inequalities
\begin{equation} \label{e:Ascoli-Ck-32}
\big| h(x_{q,p}) -1 \big| < \frac{1}{4^{p+q}}, \quad \big| h(x_0) -\frac{1}{p} \big| < \frac{1}{4^{p+q}}, \quad \big| h(x_{q,q}) \big| < \frac{1}{4^{p+q}}.
\end{equation}
Let us show that $A$ and $\U$ are as desired. Clearly, (i) holds. Let us prove (ii).

Fix a compact subset $K$ of $\CC(X)$.
Let us first observe that
\begin{equation} \label{e:Ascoli-Ck-4}
\mbox{ there exists $p_0\in\NN$ such that if $p\geq p_0$ and $q>p$, then $U_{q,p} \cap K=\emptyset$.}
\end{equation}
Indeed, otherwise we would find  sequences $p_1<q_1< p_2<q_2<\dots$ and $h_{q_i,p_i} \in U_{q_i,p_i} \cap K$. Set
\[
Z_1 :=\{ x_{q_i,p_i}: i\in\NN\} \cup\{ x_0\}.
\]
From (\ref{e:Ascoli-Ck-1}) it follows that $Z_1$ is compact, and thus, by the Ascoli Theorem \cite[3.4.20]{Eng}, there exists $r>10$ such that if $z',z''\in Z_1\cap \overline{V_r}$ and $f\in K$, then $|f(z')-f(z'')|<1/3$. But since $10< r\leq p_r <q_r$ we obtain $x_0, x_{q_r,p_r} \in Z_1\cap \overline{V_r}$. Hence, by  (\ref{e:Ascoli-Ck-32}), we have
\[
\big| h_{q_r,p_r}(x_{q_r,p_r}) -h_{q_r,p_r}(x_0)\big| > \left( 1- \frac{1}{4^{20}}\right) - \left( \frac{1}{p_r} + \frac{1}{4^{20}}\right) >\frac{1}{3}.
\]
Since $h_{q_r,p_r}\in K$, we get a contradiction.

We shall now prove that
\begin{equation} \label{e:Ascoli-Ck-5}
\mbox{ there exists $q_0\in\NN$ such that if $q\geq q_0$ and $1\leq p<p_0$, then $U_{q,p} \cap K=\emptyset$,}
\end{equation}
where $p_0$ is defined in (\ref{e:Ascoli-Ck-4}). Indeed, set
\[
Z_2 :=\{ x_{j,i} : 1\leq j\leq i<\infty \} \cup\{ x_0\}.
\]
Then $Z_2$ is compact by (\ref{e:Ascoli-Ck-1}). Again by the Ascoli Theorem, it follows that there exists $q_0\in\NN$ such that for $z',z''\in Z_2\cap \overline{V_{q_0}}$ and $f\in K$ we have $|f(z')-f(z'')|<1/4p_0$. The $q_0$ chosen in this way satisfies (\ref{e:Ascoli-Ck-5}), since otherwise there would exist $q\geq q_0$ and $1\leq p<p_0$ such that $U_{q,p} \cap K\not=\emptyset$. Fix $h_{q,p} \in U_{q,p} \cap K$. Then $x_0, x_{q,q} \in  Z_2\cap \overline{V_{q_0}}$, and by (\ref{e:Ascoli-Ck-3}) and (\ref{e:Ascoli-Ck-32}), we obtain
\[
\big| h_{q,p}(x_{q,q}) -h_{q,p}(x_0)\big| > \left( \frac{1}{p} -\frac{1}{4^{p+q}}\right) -\frac{1}{4^{p+q}} > \frac{1}{3p}> \frac{1}{4p_0},
\]
which gives a contradiction. Now  (\ref{e:Ascoli-Ck-4}) and (\ref{e:Ascoli-Ck-5}) immediately imply (ii).

Now we prove (iii).  Fix arbitrarily  a compact subset $Z\subset X$ and $\epsilon>0$. Choose $p_0$ such that $1/p_0 <\epsilon$.  By (\ref{e:Ascoli-Ck-2}), we can find $j_0\in\NN$ such that $Z\cap V_{j,p_0}=\emptyset$ for every $j\geq  j_0$. Take $q_0=p_0 +j_0$. Then $f_{q_0,p_0}\in A$, and for $z\in Z$ we have $z\not\in V_{q_0,p_0}$, and thus, in accordance with (\ref{e:Ascoli-Ck-3}), $f_{q_0,p_0}(z) \leq 1/p_0 <\epsilon$. Thus $f_{q_0,p_0}\in [Z;\epsilon]$.

Finally, the spaces $\CC(X)$ and $\CC(X,\II)$ are not Ascoli by Proposition \ref{p:Ascoli-sufficient}.
\end{proof}

The next corollary proved by R.~Pol  solves Problem 6.8 in \cite{BG}.
\begin{corollary}[\cite{Pol-2015}] \label{c:Ascoli-Ck-Pol}
For a separable metrizable space $X$, $\CC(X)$ is Ascoli if and only if $X$ is locally compact.
\end{corollary}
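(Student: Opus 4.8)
The plan is to deduce this corollary directly from the two preceding results, splitting into the two implications and letting Proposition \ref{p:Ascoli-Pol} handle one direction and Corollary \ref{c:Pol-k-space} the other.

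For the implication that $\CC(X)$ Ascoli forces $X$ to be locally compact, I would argue by contraposition. A metrizable space is first countable and paracompact (the latter by Stone's theorem), so if $X$ were not locally compact, then the hypotheses of Proposition \ref{p:Ascoli-Pol} are satisfied, and that proposition yields at once that $\CC(X)$ is not Ascoli. Hence an Ascoli $\CC(X)$ forces local compactness of $X$; observe that separability plays no role in this direction.

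For the converse, suppose $X$ is separable metrizable and locally compact. The key observation is that such an $X$ is automatically Polish: being separable and metrizable it is second countable, so its one-point compactification is compact and metrizable, hence Polish, and $X$ sits inside it as an open, therefore $G_\delta$, subset, which is again Polish. Thus $X$ is a Polish locally compact space, and Corollary \ref{c:Pol-k-space} tells us that $\CC(X)$ is a $k$-space. Since every $k$-space is a $k_\IR$-space and every $k_\IR$-space is Ascoli (Noble's theorem, as recalled in the introduction), we conclude that $\CC(X)$ is Ascoli.

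I do not anticipate any genuine obstacle here: the corollary is essentially a repackaging of Proposition \ref{p:Ascoli-Pol} together with the $k$-space characterization of Corollary \ref{c:Pol-k-space}. The only points requiring care are the verification that the standing hypotheses of Proposition \ref{p:Ascoli-Pol}, namely first countability and paracompactness, are met by every metrizable space, and the elementary fact that a locally compact second countable Hausdorff space is Polish, which is exactly what makes Corollary \ref{c:Pol-k-space} applicable in the converse direction.
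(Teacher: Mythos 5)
Your proof is correct and follows essentially the same route as the paper: the forward implication via Proposition \ref{p:Ascoli-Pol} (noting metrizable spaces are first countable and paracompact), and the converse by observing that a separable metrizable locally compact space is Polish, so that $\CC(X)$ is Polish (equivalently, a $k$-space by Corollary \ref{c:Pol-k-space}) and hence Ascoli. You merely spell out details the paper leaves implicit, such as the one-point compactification argument and the chain $k$-space $\Rightarrow$ $k_\IR$-space $\Rightarrow$ Ascoli.
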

\begin{proof}
If $\CC(X)$ is Ascoli, then $X$ is locally compact by Proposition \ref{p:Ascoli-Pol}. Conversely, if $X$ is a separable metrizable locally compact space, then $\CC(X)$ is even a Polish space.
\end{proof}

Recall that a family $\mathcal{N}$ of subsets of a topological space $X$ is called a \emph{network} in $X$ if, whenever $x\in U$ with $U$ open in $X$, then $x\in N\subset U$ for some $N \in\mathcal{N}$. A space $X$ is called a {\em $\sigma$-space} if it is regular and has a $\sigma$-locally finite network. Any metrizable space is a $\sigma$-space by the Nagata-Smirnov Metrization Theorem.

Now Theorem \ref{t:Ascoli-Ck-Metriz} follows from the following theorem in which the equivalence of (i) and (ii) is well-known.
\begin{theorem} \label{t:Ascoli-Parac}
Let $X$ be  a first-countable paracompact $\sigma$-space. Then the following assertions are equivalent:
\begin{enumerate}
\item[{\rm (i)}] $X$ is a locally compact metrizable space;
\item[{\rm (ii)}] $X=\bigoplus_{i\in\kappa} X_i$, where all $X_i$ are separable metrizable locally compact spaces;
\item[{\rm (iii)}] $\CC(X)$ is a  $k_\IR$-space;
\item[{\rm (iv)}] $\CC(X)$ is an Ascoli space;
\item[{\rm (v)}] $\CC(X,\II)$  is a  $k_\IR$-space;
\item[{\rm (vi)}] $\CC(X,\II)$ is an Ascoli space.
\end{enumerate}
In cases (i)--(vi), the spaces $\CC(X)$  and $\CC(X,\II)$ are the products of families of Polish spaces.
\end{theorem}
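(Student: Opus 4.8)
The plan is to take the equivalence (i)$\Leftrightarrow$(ii) as given and to prove the two chains (ii)$\Rightarrow$(iii)$\Rightarrow$(iv)$\Rightarrow$(i) and (ii)$\Rightarrow$(v)$\Rightarrow$(vi)$\Rightarrow$(i); since (i) and (ii) are equivalent, this closes all six conditions into one class. The structural heart of the argument is that condition (ii) makes both function spaces split as products of Polish spaces, and that arbitrary products of Polish spaces are $k_\IR$-spaces.

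First I would treat (ii)$\Rightarrow$(iii) and (ii)$\Rightarrow$(v). Writing $X=\bigoplus_{i\in\kappa}X_i$ with each $X_i$ separable metrizable and locally compact, one has the canonical homeomorphisms $\CC(X)\cong\prod_{i\in\kappa}\CC(X_i)$ and $\CC(X,\II)\cong\prod_{i\in\kappa}\CC(X_i,\II)$, valid because a compact subset of a topological sum is contained in finitely many summands. Each $X_i$ is Polish and locally compact, so Corollary \ref{c:Pol-k-space} gives that $\CC(X_i)$ is Polish, and $\CC(X_i,\II)$, being a closed subspace of $\CC(X_i)$, is Polish too. Hence $\CC(X)$ and $\CC(X,\II)$ are products of families of Polish spaces---this is precisely the final assertion of the theorem---and invoking the fact that every product of Polish spaces is a $k_\IR$-space yields (iii) and (v). The passages (iii)$\Rightarrow$(iv) and (v)$\Rightarrow$(vi) are then immediate, since every $k_\IR$-space is Ascoli (Noble \cite{Noble}).

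Next I would close the loops via (iv)$\Rightarrow$(i) and (vi)$\Rightarrow$(i), arguing by contraposition. As $X$ is first countable and paracompact, Proposition \ref{p:Ascoli-Pol} shows that if $X$ is not locally compact then neither $\CC(X)$ nor $\CC(X,\II)$ is Ascoli; thus (iv) or (vi) forces $X$ to be locally compact. To promote this to (i) I would use the $\sigma$-space hypothesis: a compact subspace of a $\sigma$-space inherits a $\sigma$-locally finite network, which on a compact space is countable, so every point of $X$ has a compact, hence metrizable, neighborhood. Therefore $X$ is locally metrizable, and being paracompact it is metrizable by the Smirnov metrization theorem; together with local compactness this is exactly (i).

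I expect the one genuinely non-routine ingredient to be the theorem that an arbitrary product of Polish spaces is a $k_\IR$-space. It is unavoidable here: when the index set $\kappa$ is uncountable the product $\prod_{i}\CC(X_i)$ is not metrizable, so the $k$-space criterion of Corollary \ref{c:Pol-k-space} no longer applies and the completeness of the factors must be exploited. I would either cite this product theorem from the literature on $k_\IR$-spaces or establish it separately; once it is in hand, the remaining implications are bookkeeping built on Proposition \ref{p:Ascoli-Pol}, Corollary \ref{c:Pol-k-space}, the implication $k_\IR\Rightarrow$ Ascoli, and the standard metrization theorems.
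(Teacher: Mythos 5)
Your proposal is correct and follows essentially the same route as the paper: (ii)$\Rightarrow$(iii),(v) via the decomposition $\CC(X)=\prod_{i\in\kappa}\CC(X_i)$, $\CC(X,\II)=\prod_{i\in\kappa}\CC(X_i,\II)$, Polishness of the factors from Corollary \ref{c:Pol-k-space}, and the theorem that arbitrary products of Polish spaces are $k_\IR$-spaces (the paper cites precisely this, namely \cite[Theorem 5.6]{Nob}, so the ingredient you flagged as ``non-routine'' is indeed handled by citation); then (iii)$\Rightarrow$(iv) and (v)$\Rightarrow$(vi) from $k_\IR\Rightarrow$ Ascoli \cite{Noble}; and (iv),(vi)$\Rightarrow$(i) via Proposition \ref{p:Ascoli-Pol}. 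The only genuine divergence is the final metrization step. The paper closes (iv),(vi)$\Rightarrow$(i) by citing O'Meara's theorem \cite{OMe} that every locally compact $\sigma$-space is metrizable, whereas you reprove this under the standing hypotheses: compact subsets of a $\sigma$-space have $\sigma$-locally finite, hence countable, networks, so they are metrizable; thus $X$ is locally metrizable, and paracompactness plus the Smirnov metrization theorem yields metrizability. Both arguments are valid; yours is self-contained and elementary but leans on paracompactness (available here anyway), while O'Meara's result does the job without it. Your extra observation that $\CC(X_i,\II)$ is Polish because it is closed in the Polish space $\CC(X_i)$ is also a slightly more careful justification than the paper's bare appeal to Corollary \ref{c:Pol-k-space}, which is stated only for $\CC(X_i)$.
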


\begin{proof}
(i)$\Rightarrow$(ii) follows from \cite[5.1.27]{Eng}.

(ii)$\Rightarrow$(iii),(v): If $X=\bigoplus_{i\in\kappa} X_i$, then
\[
\CC(X) =\prod_{i\in\kappa} \CC(X_i)\quad \mbox{ and } \quad \CC(X,\II) =\prod_{i\in\kappa} \CC(X_i,\II),
\]
 where all the spaces $ \CC(X_i)$ and $\CC(X_i,\II)$ are Polish (see Corollary \ref{c:Pol-k-space}). So $\CC(X)$ and $\CC(X,\II)$ are  $k_\IR$-spaces by \cite[Theorem 5.6]{Nob}.

(iii)$\Rightarrow$(iv) and (v)$\Rightarrow$(vi) follow from \cite{Noble}. The implications (iv)$\Rightarrow$(i) and (vi)$\Rightarrow$(i) follow from Proposition \ref{p:Ascoli-Pol} and the fact that any locally compact $\sigma$-space is metrizable by \cite{OMe}.
\end{proof}
Note that Theorem \ref{t:Ascoli-Parac} holds true for first-countable stratifiable spaces since any stratifiable space is a paracompact  $\sigma$-space (see Theorems 5.7 and 5.9 in \cite{gruenhage}).


\section{Proofs of Theorems \ref{t:Ascoli-Banach-weak} and \ref{t:Ascoli-quoj-weak}}


Following Arhangel'skii \cite[II.2]{Arhangel}, we say  that a topological space $X$ has {\em countable fan tightness at a point } $x\in X$ if for each sets $A_n\subset X$, $n\in\NN$, with $x\in \bigcap_{n\in\NN} \overline{A_n}$ there are finite sets $F_n\subset A_n$, $n\in\NN$, such that $x\in \overline{\cup_{n\in\NN} F_n}$; $X$ has {\em countable fan tightness} if $X$ has countable fan tightness at each point $x\in X$. Clearly, if $X$ has countable fan tightness, then $X$ also has countable tightness.

For a topological space $X$  we denote by $C_p(X)$ the space $C(X)$ endowed with the topology of poitwise convergence.

For a lcs $E$, denote by $E'$ the dual space of $E$. The space $E$ endowed with the weak topology $\sigma(E,E')$ is denoted by $E_w$. The closure of a subset $A\subset E$ in $\sigma(E,E')$ we denote by $\overline{A}^{\, w}$. If $E$ is a metrizable lcs, then $X:=(E', \sigma(E',E))$ is $\sigma$-compact by the Alaoglu--Bourbaki Theorem. Since $E_w$ embeds into $C_p(X)$, Theorem II.2.2 of \cite{Arhangel} immediately implies the following result noticed in \cite{GKZ}.
\begin{fact}[\cite{GKZ}]\label{t:Weak-Fan-tight}
If $E$ is a metrizable lcs, then $E_w$ has countable fan tightness.
\end{fact}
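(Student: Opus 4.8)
The plan is to reduce the statement to Arhangel'skii's characterization of countable fan tightness of a space of the form $C_p(X)$ and then restrict to the subspace $E_w$. The two structural facts I would use are already isolated in the paragraph above: that $X:=(E',\sigma(E',E))$ is $\sigma$-compact (by the Alaoglu--Bourbaki Theorem) and that $E_w$ embeds into $C_p(X)$ via $x\mapsto(\,f\mapsto f(x)\,)$, since the weak topology $\sigma(E,E')$ is exactly the topology of pointwise convergence of the functionals on $E'$.

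First I would spell out the $\sigma$-compactness: as $E$ is metrizable it has a countable decreasing base $\{U_n\}_{n\in\NN}$ of neighborhoods of $0$, each polar $U_n^{\circ}$ is $\sigma(E',E)$-compact by Alaoglu--Bourbaki, and $\bigcup_{n\in\NN}U_n^{\circ}=E'$, so $X$ is a countable union of weak$^*$-compact sets. Consequently every finite power $X^m$ is again $\sigma$-compact, and a $\sigma$-compact space trivially satisfies the Menger-type selection property for open covers that appears in the hypothesis of Theorem II.2.2 of \cite{Arhangel} (finitely many members of each given cover already cover each compact piece). That theorem then yields that $C_p(X)$ has countable fan tightness.

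It remains to pass from $C_p(X)$ to its subspace $E_w$, and here I would simply observe that countable fan tightness is inherited by every subspace: if $Y\subseteq Z$, $x\in Y$, and $A_n\subseteq Y$ satisfy $x\in\bigcap_{n\in\NN}\overline{A_n}^{\,Y}$, then $x\in\bigcap_{n\in\NN}\overline{A_n}^{\,Z}$, so fan tightness of $Z$ produces finite sets $F_n\subseteq A_n$ with $x\in\overline{\bigcup_{n\in\NN}F_n}^{\,Z}$; since $\bigcup_{n\in\NN}F_n\subseteq Y$ and $x\in Y$, this gives $x\in\overline{\bigcup_{n\in\NN}F_n}^{\,Y}$. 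Applying this with $Z=C_p(X)$ and $Y=E_w$ finishes the proof. The only point that genuinely needs care is matching the covering property demanded in Arhangel'skii's characterization with the one that a $\sigma$-compact space and all its finite powers satisfy; once the correct selection principle is identified the verification is immediate, so I do not expect a serious obstacle here.
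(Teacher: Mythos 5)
Your proposal is correct and follows exactly the route the paper itself indicates: the $\sigma$-compactness of $X=(E',\sigma(E',E))$ via Alaoglu--Bourbaki applied to the polars of a countable neighborhood base, the embedding $E_w\hookrightarrow C_p(X)$, and Arhangel'skii's Theorem II.2.2 characterizing countable fan tightness of $C_p(X)$ by the Menger-type selection property of all finite powers of $X$. The details you add---that finite powers of a $\sigma$-compact space satisfy that selection property, and that countable fan tightness passes to subspaces---are precisely the routine verifications the paper leaves implicit, and both are carried out correctly.
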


Denote the unit sphere of a normed space $E$  by $S_E$. Theorem \ref{t:Ascoli-Banach-weak} immediately follows from the next proposition.
\begin{proposition}\label{p:Ascoli-normed-weak}
Let $E$ be a normed space. Then $E$ with the weak topology is Ascoli if and only if $E$ is finite-dimensional.
\end{proposition}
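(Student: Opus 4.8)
The plan is to treat the two implications separately, the whole substance lying in the ``only if'' direction, i.e. in showing that an infinite-dimensional $E$ fails to be Ascoli in its weak topology. If $\dim E<\infty$ then $\sigma(E,E')$ is just the norm topology, so $E_w$ is metrizable; a metric space is a $k$-space and hence a $k_\IR$-space, so $E_w$ is Ascoli by the chain of implications recalled in the Introduction. For the converse I would assume $\dim E=\infty$ and derive non-Ascoli from Proposition \ref{p:Ascoli-sufficient}, taking the cluster point $z=\mathbf 0$. Two elementary observations make conditions (i)--(ii) of that proposition almost free. First, every weakly compact $C\subseteq E_w$ is norm bounded, since weak boundedness equals norm boundedness (uniform boundedness principle on the Banach space $E'$). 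Second, for each $c>0$ the set $\{x\in E:\|x\|>c\}$ is weakly open, because $\{x:\|x\|\le c\}$ is convex and norm closed, hence weakly closed. Thus it suffices to manufacture vectors $a_i$ whose norms escape to infinity, in the sense that $\{i:\|a_i\|\le M\}$ is finite for every $M$, and for which $\mathbf 0$ is a weak cluster point of $A=\{a_i\}$: putting $U_i=\{x:\|x\|>\|a_i\|-1\}$ gives $a_i\in U_i$, while a weakly compact $C$ of norm-radius $M_C$ can meet $U_i$ only when $\|a_i\|<M_C+1$, i.e. for finitely many $i$. Hence (i)--(iii) hold and Proposition \ref{p:Ascoli-sufficient} applies. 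The entire problem is therefore reduced to producing a norm-escaping family whose weak closure contains $\mathbf 0$.

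The starting point is the classical fact that $\mathbf 0\in\overline{S_E}^{\,w}$ in infinite dimensions: any basic weak neighbourhood $\{x:|f_j(x)|<\eps,\ j\le k\}$ contains a unit vector, since $\bigcap_{j\le k}\ker f_j$ has finite codimension and is therefore nonzero. Upgrading this to escaping norms is transparent in a Hilbert space: with $(e_n)$ orthonormal set $a_n=\sqrt{n}\,e_n$; for a finite $F\subset E'$ the numbers $b_n=\sum_{f\in F}|f(e_n)|^2$ satisfy $\sum_n b_n=\sum_{f\in F}\|f\|^2<\infty$, so $\liminf_n n\,b_n=0$ (otherwise $b_n\ge c/n$ eventually, contradicting the divergence of $\sum 1/n$), and along a subsequence $|f(a_n)|\le\sqrt{n\,b_n}\to 0$ for all $f\in F$ at once; hence $\mathbf 0\in\overline{\{a_n\}}^{\,w}$. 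The same computation works whenever $E$ carries a normalised weakly null sequence that is square-summable against every functional, so the reduction above closes the argument for all such spaces.

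The hard part, and the step I expect to be the main obstacle, is that this mechanism is genuinely unavailable in general: spaces with the Schur property, such as $\ell_1$, admit no normalised weakly null sequence at all, so the escape-in-norm device cannot be used there. In that regime I would instead keep the vectors bounded, using a family $a_F\in S_E\cap\bigcap_{f\in F}\ker f$ indexed by the finite subsets $F\subset E'$ (so that $\mathbf 0\in\overline{\{a_F\}}^{\,w}$ holds automatically by the kernel argument above), and shift the whole difficulty onto choosing weak neighbourhoods $U_F$ for which (ii) holds; the lever there is that in a Schur space weakly compact sets are norm compact, hence \emph{small}, and should meet the chosen neighbourhoods only finitely often. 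The crux is precisely to reconcile the reflexive regime—where the balls are weakly compact, which forces the points to escape in norm—with the non-reflexive one, where by the very obstruction the points must remain bounded; organising the construction according to whether bounded sets are relatively weakly compact, and carrying out the local-finiteness bookkeeping for the weak neighbourhoods $U_F$ in the bounded case, is the delicate part of the proof.
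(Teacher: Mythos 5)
Your reduction to Proposition \ref{p:Ascoli-sufficient} is exactly the right one (take $z=\mathbf 0$, norm-escaping points $a_i$, and complements of closed balls as the weakly open $U_i$), and both your finite-dimensional direction and your Hilbert-space construction are correct. But the proof has a genuine gap, which you yourself flag: for a general infinite-dimensional normed space you never produce a family $A$ with $\{i:\|a_i\|\le M\}$ finite for every $M$ and $\mathbf 0\in\overline{A}^{\,w}$. Your fallback for Schur-type spaces abandons norm escape and keeps the points $a_F$ bounded, but then the required local finiteness (condition (ii)) is left entirely unverified; moreover, in the reflexive regime bounded points cannot work at all, since the ball is weakly compact and meets every $U_F$, while the dichotomy ``reflexive versus Schur'' is not exhaustive, so even the organisational scheme does not cover all spaces. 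As it stands, the ``only if'' direction is established only for spaces admitting a normalised weakly null sequence that is square-summable against every functional.

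The paper closes precisely this gap with one tool you are missing: countable fan tightness of $E_w$ (Fact \ref{t:Weak-Fan-tight}), which holds because $E_w$ embeds into $C_p(X)$ for the $\sigma$-compact space $X=(E',\sigma(E',E))$ and Arhangel'skii's theorem applies. Note that your own kernel argument gives $\mathbf 0\in\overline{nS_E}^{\,w}$ for \emph{every} $n$, since a finite intersection of kernels meets every sphere, not just the unit one. So take countable sets $A_n\subset nS_E$ with $\mathbf 0\in\overline{A_n}^{\,w}$ (countable tightness), and then apply fan tightness to get finite sets $F_n\subset A_n$ with $\mathbf 0\in\overline{\bigcup_{n\in\NN}F_n}^{\,w}$. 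The set $A=\bigcup_{n\in\NN}F_n$ is automatically norm-escaping, because only the finitely many points of $F_1\cup\dots\cup F_m$ have norm at most $m$; after that, your own bookkeeping with $U_a=\{x\in E:\|x\|>\|a\|-1/2\}$ finishes the proof uniformly, with no case distinction between reflexive, Schur, or intermediate spaces.
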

\begin{proof}
We show that $E_w$ is not Ascoli for any infinite-dimensional normed space $E$.

For every $n\in\NN$, let $A_n$ be a countable subset of $nS$ such that $0\in \overline{A_n}^{\, w}$
(see \cite[Exercise 3.46]{fabian-10} and Fact \ref{t:Weak-Fan-tight}). Now Fact \ref{t:Weak-Fan-tight} implies that there are finite sets  $F_n\subset A_n$, $n\in\NN$, such that $0\in \overline{\cup_{n\in\NN} F_n}$. Set $A:= \bigcup_{n\in\NN} F_n$. Using the Hahn--Banach Theorem, for every $n\in\NN$ and each $a\in F_n$ take a weakly open neighborhood $U_a$ of $a$ such that
\begin{equation} \label{e:Ascoli-normed-weak}
U_a \cap \left(n-\frac{1}{2} \right)B =\emptyset.
\end{equation}
Let us show that the family $\U=\{ U_a: a\in A\}$, the set $A$ and the zero $0$ satisfy conditions (i)--(iii) of Proposition \ref{p:Ascoli-sufficient}. Clearly, (i) and (iii) hold. To check (ii), let $C$ be a compact subset of $E_w$. Then $C\subset mB$ for some $m\in\NN$, and (\ref{e:Ascoli-normed-weak}) implies that the set
\[
\{ a\in A: U_a \cap C \not=\emptyset \} \subset \bigcup_{n\leq m} F_n
\]
is finite. Finally, Proposition \ref{p:Ascoli-sufficient} implies that $E_w$ is not Ascoli. $\Box$
\end{proof}
We need also the following
\begin{proposition} \label{p:Ascoli-Quotient}
Let $p:X\to Y$ be an open continuous  map of a topological space $X$ onto a regular space $Y$. If $X$ is Ascoli, then $Y$ is also an Ascoli space.
\end{proposition}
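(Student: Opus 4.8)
The plan is to transport the problem to $X$ along the dual map $p^{*}\colon \CC(Y)\to\CC(X)$, $p^{*}(f):=f\circ p$, and then to exploit that $X$ is Ascoli. First I would verify that $p^{*}$ is a well-defined continuous injection. Injectivity is immediate, since $f\circ p=g\circ p$ forces $f=g$ because $p$ is onto. For continuity it is enough to test subbasic sets of $\CC(X)$: writing $[K,V]=\{h:h(K)\subseteq V\}$ for compact $K\subseteq X$ and open $V\subseteq\IR$, one has $(p^{*})^{-1}\big([K,V]\big)=\{f\in C(Y): f(p(K))\subseteq V\}=[p(K),V]$, which is open in $\CC(Y)$ precisely because $p(K)$ is compact (continuous image of a compact set).

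Now fix a compact set $\KK\subseteq\CC(Y)$; the goal is to show it is evenly continuous. Since $p^{*}$ is continuous, the image $p^{*}(\KK)$ is a compact subset of $\CC(X)$, so the Ascoli property of $X$ guarantees that $p^{*}(\KK)$ is evenly continuous in $\CC(X)$. The remaining task is to push this even continuity forward through $p$.

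For the transfer, take $y\in Y$, $f\in\KK$, and a neighbourhood $O$ of $f(y)$ in $\IR$. Using surjectivity, choose $x\in X$ with $p(x)=y$, so that $(p^{*}f)(x)=f(y)$. Even continuity of $p^{*}(\KK)$ at $x$ for the function $p^{*}f$ yields a neighbourhood $\tilde U$ of $p^{*}f$ in $p^{*}(\KK)$ and a neighbourhood $O_x$ of $x$ such that $\tilde g(x')\in O$ whenever $\tilde g\in\tilde U$ and $x'\in O_x$. Put $U_f:=\{g\in\KK: p^{*}g\in\tilde U\}$, which is a neighbourhood of $f$ in $\KK$ by continuity of $p^{*}$, and — this is where openness of $p$ is used — put $O_y:=p(O_x)$, an \emph{open} neighbourhood of $y$. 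Then for $g\in U_f$ and $y'\in O_y$, writing $y'=p(x')$ with $x'\in O_x$, we obtain $g(y')=(g\circ p)(x')=(p^{*}g)(x')\in O$, so $U_f(O_y)\subseteq O$. Hence $\KK$ is evenly continuous, and $Y$ is Ascoli.

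The only real content is the bookkeeping of this transfer, and the two hypotheses on $p$ play genuinely distinct roles: surjectivity supplies a preimage $x$ of the test point $y$ so that even continuity of $p^{*}(\KK)$ can be invoked, while openness is exactly what converts the neighbourhood $O_x$ produced in $X$ into a neighbourhood $O_y=p(O_x)$ of $y$ in $Y$ — without it one could not control $O_y$. I expect the verification that $p^{*}$ is continuous (needed both to make $p^{*}(\KK)$ compact and to make $U_f$ a neighbourhood) to be the most technical, though entirely routine, point. Regularity of $Y$ serves only to place $Y$ in the class of spaces for which the Ascoli property is considered; the even-continuity argument itself requires nothing further.
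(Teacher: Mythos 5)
Your proof is correct and takes essentially the same route as the paper's: both transport the compact set $\KK\subset \CC(Y)$ via the adjoint map $p^{*}$, use the Ascoli property of $X$ to get even continuity of the compact set $p^{*}(\KK)$, and transfer back by picking a preimage of the test point (surjectivity) and taking $O_y=p(O_x)$ as the neighbourhood in $Y$ (openness), with $U_f=\KK\cap(p^{*})^{-1}(\tilde U)$ exactly as in the paper. The only difference is that you explicitly (and correctly) verify the continuity of $p^{*}$ on subbasic sets, which the paper simply asserts.
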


\begin{proof}
 Let $\KK$ be a compact subset of $\CC(Y)$. We have to show that $\KK$ is evenly continuous. Denote by $p^\ast : \CC(Y)\to \CC(X), p^\ast(h):=h(p(x))$, the adjoin continuous map.

 Fix $y_0\in Y$, $h_0\in \KK$ and an open neighborhood $O_{z_0}$ of the point $z_0 :=h_0(y_0)$. Set $f:=p^\ast(h_0)\in \CC(X)$ and take arbitrarily a preimage $x_0$ of $y_0$, so $p(x_0)=y_0$. Since $p^\ast(\KK)$ is a compact subspace of $\CC(X)$ it is evenly continuous. Hence we can find neighborhoods $U_{f}\subset  p^\ast(\KK)$ of $f$ and $O_{x_0} \subset X$ of $x_0$ such that $U_f (O_{x_0}) \subset O_{z_0}$. Set   $U_{h_0} := \KK\cap \big( p^\ast\big)^{-1} (U_f)$ and $O_{y_0} := p\big( O_{x_0}\big)$ (which is a neighborhood of $y_0$ as $p$ is open). For every $h\in U_{h_0}$ and each $y\in O_{y_0}$, take $x\in O_{x_0}$ with $p(x)=y$, so we obtain
\[
h(y)=h(p(x))=p^\ast(h)(x) \in O_{z_0}.
\]
Thus $\KK$ is evenly continuous, and therefore $Y$ is Ascoli.
\end{proof}

Below we prove Theorem  \ref{t:Ascoli-quoj-weak} and Corollary \ref{c:Ascoli-Ck(X)-quoj}.

{\em Proof of Theorem \ref{t:Ascoli-quoj-weak}.}
Assume that $E$ is infinite-dimensional. By Proposition \ref{p:Ascoli-normed-weak} the space $E$ is not normed. Let $(p_{n})_{n}$ be a sequence of continuous seminorms providing the topology of $E$. For each $n\in\mathbb{N}$, let $E_{n}:=E/p_{n}^{-1}(0)$ be the quotient endowed with the norm topology $p_{n}^{*}: [x]\mapsto p_{n}(x)$, where $[x]$ is the equivalence class of $x$ in $E$. Since $E$ is a quojection, the quotient $E_{n}$ with the original quotient topology is a Banach space by \cite[Proposition 3]{bellenot}.

By Proposition \ref{p:Ascoli-Quotient} the space $E_{n}$ endowed with the weak topology is Ascoli, so we apply  Proposition \ref{p:Ascoli-normed-weak} to deduce that each $E_{n}$ is finite-dimensional. On the other hand, $E$ embeds into the product $\prod_{n} E_{n}$. So $E$, being complete, is isomorphic to  a closed subspace of the product $\mathbb{K}^{\mathbb{N}}$. Thus $E$ is also isomorphic to $\mathbb{K}^{\mathbb{N}}$ by \cite[Corollary 2.6.5]{bonet}.
$\Box$

{\em Proof of Corollary \ref{c:Ascoli-Ck(X)-quoj}.}
By Theorem \ref{t:Ascoli-quoj-weak} the space $\CC(X)$ is isomorphic to $\mathbb{R}^{\mathbb{N}}$, and since $\mathbb{R}^{\mathbb{N}}$ does not admit a weaker locally convex topology (see \cite[Corollary 2.6.5]{bonet}), $\CC(X)=C_{p}(X)=\mathbb{R}^{\mathbb{N}}$. Thus $X$ is countable and discrete. The converse assertion is trivial.
$\Box$

We do not know whether there exists a Fr\'{e}chet space $E$ such that $E_w$ is an Ascoli non-metrizable space.
\begin{remark}\label{kothe} {\em
The first example of a non-distinguished Fr\'echet space (so also not quojection) was given by Grothendieck and K\"{o}the, and it
was the K\"{o}the echelon space $\lambda_{1}(A)$  of order $1$ for the K\"{o}the matrix $A = (a_{n})_{n}$  defined on $\mathbb{N}\times\mathbb{N}$ by
$a_{n}(i,j):= j$ if $i < n$ and $a_{n}(i, j)=1$ otherwise, see \cite{bierstedt} also for more references.  We do not know however if  this space with the weak topology is an Ascoli space. }
\end{remark}



\section{Proof of Theorem \ref{t:Ascoli-Main}}


To prove Theorem \ref{t:Ascoli-Main} we need the following key proposition,  which proves, among others,  that the unit ball $B_{\ell_1}$ in the weak topology is not Ascoli. In particular, since the $k$-space property is inherited by the closed subspaces, this shows also that any Banach space $E$ whose weak unit ball $B_w$ is a $k$-space contains no isomorphic copy of $\ell_1$, i.e. the proposition proves (c)$\Rightarrow$(d) in  Schl\"{u}chtermann--Wheeler's theorem \ref{t:S-W-1}. A sequence $\{ x_i\}_{i\in\NN} \subset E$ is called {\em trivial} if there is $n\in\NN$ such that $x_i=x_n$ for all $i>n$.

\begin{proposition} \label{p:Ascoli-L1-weak}
Let $E=\ell_1$ and $B_w$ its closed unit ball in the weak topology. Then there is a countable subset $A$ of $S_{\ell_1}$ and a family $\U =\{ U_a: a\in A\}$ of weakly open subsets of the unit ball $B$ such that
\begin{enumerate}
\item[{\rm (1)}] $a\in U_a$ for every $a\in A$;
\item[{\rm (2)}] $\mathrm{dist}(U_a, U_b)\geq 1/5$ for every distinct $a,b\in A$;
\item[{\rm (3)}] the zero $0$ is the unique cluster point of $A$;
\item[{\rm (4)}] $\big|\{ a\in A: C\cap U_a\not=\emptyset \}\big| <\infty$ for every weakly compact subset $C$ of $B$;
\item[{\rm (5)}] $\overline{A}^{\, w}=A\cup\{ 0\}$ and every weakly compact subset of $\overline{A}^{\, w}$ is finite;
\item[{\rm (6)}] $A$ contains a sequence which is equivalent to the unit basis of $\ell_1$;
\item[{\rm (7)}] the set $A$ does not have a non-trivial weakly fundamental subsequence;
\item[{\rm (8)}] the countable space $\overline{A}^{\, w}$  and $B_w$ are not Ascoli.
\end{enumerate}
\end{proposition}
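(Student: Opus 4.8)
The plan is to realise all eight requirements through a single explicit device built from the Walsh (equivalently, Rademacher) system. I would fix pairwise disjoint finite intervals $I_j\subset\NN$ with $|I_j|=2^{k_j}$, where $k_j\to\infty$ and $\min I_j\to\infty$, and index the coordinates in $I_j$ by $\{0,1\}^{k_j}$. For each character $\chi_S$ ($S\subseteq\{1,\dots,k_j\}$, values $\pm1$) I set $w^{(j)}_S:=2^{-k_j}\sum_{i\in I_j}\chi_S(i)\,e_i\in S_{\ell_1}$ and let $A:=\{w^{(j)}_S: j\in\NN,\ S\subseteq\{1,\dots,k_j\}\}$. Two features are immediate and drive most of the proof: within a block the Walsh vectors are exactly $1$-separated (distinct characters disagree on half of $I_j$), and across blocks the vectors have disjoint supports, hence are $2$-separated; in particular $A$ is $1$-separated in norm. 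Choosing one vector per block gives a disjointly supported normalised sequence, isometric to the $\ell_1$-basis, so (6) holds; the $1$-separation yields (7), once one recalls that in $\ell_1$ a weakly fundamental (weakly Cauchy) sequence is norm convergent (weak sequential completeness together with the Schur property), hence norm Cauchy, hence eventually constant on a $1$-separated set.

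The heart of the argument is (3): that $0$ is a weak cluster point of $A$. Here one must overcome the basic obstruction that a disjointly supported normalised family is \emph{detected} by the single functional obtained by concatenating its sign patterns, which would push all of $A$ uniformly away from $0$. The Walsh structure defeats exactly this. Given $f_1,\dots,f_m\in\ell_\infty$ and $\eps>0$, a Parseval computation on each block---or, in probabilistic terms, the second moment estimate for a uniformly random character, using the independence of the Rademacher coordinates---gives
\[
\sum_{S\subseteq\{1,\dots,k_j\}}\big\langle f_l,w^{(j)}_S\big\rangle^2\le \|f_l\|_\infty^2\qquad(1\le l\le m).
\]
Consequently the number of characters $S$ with $\max_l|\langle f_l,w^{(j)}_S\rangle|\ge\eps$ is at most $\sum_l\|f_l\|_\infty^2/\eps^2$, a bound independent of $j$. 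Since $k_j\to\infty$, all but finitely many blocks contain Walsh vectors lying in the basic weak neighbourhood $\{y:\ |\langle f_l,y\rangle|<\eps,\ l\le m\}$ of $0$; as there are infinitely many blocks, this neighbourhood meets $A$ in infinitely many points, so $0$ is a weak cluster point of $A$. This step is the main obstacle; everything else is comparatively routine.

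That $0$ is the \emph{unique} cluster point, whence $\overline{A}^{\,w}=A\cup\{0\}$, follows by testing against a single coordinate functional: any prospective nonzero cluster point $x$ has some $x_{i_0}\ne0$, whereas every $a\in A$ satisfies $|a_{i_0}|\in\{0\}\cup\{2^{-k_j}\}$ and is nonzero in the $i_0$-th coordinate only when $i_0\in I_j$, which happens for a single block; thus the weak neighbourhood of $x$ cut out by $e_{i_0}^*$ contains only finitely many elements of $A$. This gives (3) and the first half of (5). For the second half I would invoke that weakly compact subsets of $\ell_1$ are norm compact (Schur property, via Eberlein--\v{S}mulian), so a norm-compact, hence totally bounded, subset of the $1$-separated set $A\cup\{0\}$ must be finite.

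It then remains to produce the neighbourhoods and close the argument. For $a=w^{(j)}_S$ I set $U_a:=\{y\in B:\ \sum_{i\in I_j}|y_i-a_i|<1/10\}$, which is weakly open since it constrains only the finitely many coordinates in $I_j$; clearly $a\in U_a$, giving (1). Using that each $a$ carries all of its unit mass inside a single block, a short estimate shows that $y\in U_a$ and $z\in U_b$ with $a\ne b$ satisfy $\|y-z\|_1\ge \tfrac45$, whether $a,b$ lie in the same or in different blocks; this yields (2), with room to spare over $1/5$. The same separation gives (4): if a weakly, hence norm, compact $C\subseteq B$ met infinitely many $U_a$, then selecting one point of $C$ from each would produce an infinite $\tfrac45$-separated subset of $C$, contradicting total boundedness. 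Finally (8) follows formally: $\overline{A}^{\,w}=A\cup\{0\}$ has unique cluster point $0$ and only finite compact subspaces, so it is not Ascoli by Corollary \ref{c:Ascoli-sufficient}, while $B_w$ is not Ascoli by Proposition \ref{p:Ascoli-sufficient} applied to $\U=\{U_a:a\in A\}$, the set $A$ and the point $z=0$, whose hypotheses (i), (ii), (iii) are exactly the properties (1), (4) and (3) established above.
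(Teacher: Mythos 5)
Your proposal is correct, but it takes a genuinely different route from the paper's. The paper works with $A=\{\tfrac12(e_m-e_n): m<n\}$ and neighbourhoods $U_{m,n}$ cut out by just the two coordinate functionals $e_m^\ast,e_n^\ast$; it proves that $0$ is a weak cluster point by an elementary pigeonhole argument (given finitely many $\ell_\infty$-functionals, pass to an infinite set of coordinates on which each has constant sign, then find a pair $m<n$ on which each functional has nearly equal absolute values, so that it nearly cancels on $e_m-e_n$), and it proves (7) by explicitly constructing $\pm 1$-valued functionals in $S_{\ell_\infty}$ that witness the failure of the weak Cauchy condition along any candidate subsequence, via a two-case analysis. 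You instead build $A$ from the Walsh system on disjoint blocks, obtain the cluster-point property (3) from a Parseval/second-moment counting bound (at most $\sum_l \|f_l\|_\infty^2/\varepsilon^2$ characters per block are ``seen'' by the test functionals, while block sizes tend to infinity), and get (7) softly: by the Schur property and weak sequential completeness of $\ell_1$, a weakly Cauchy sequence is norm convergent, hence eventually constant in your $1$-separated set. Both arguments use the Schur property in the same way for (4)--(5), and both deduce (8) by feeding (1), (3), (4) into Proposition \ref{p:Ascoli-sufficient} and (5) into Corollary \ref{c:Ascoli-sufficient}. As for what each buys: the paper's set is simpler to define, its neighbourhoods constrain only two coordinates, and its combinatorics are completely elementary; your construction requires the block/character bookkeeping but gives a quantitative refinement of (3) (in every sufficiently large block, most Walsh vectors lie in any prescribed weak neighbourhood of $0$), a better separation constant ($4/5$ rather than $1/5$) in (2), and a casework-free proof of (7). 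I checked the delicate points of your argument---the Parseval bound, the same-block and different-block estimates for (2) (the latter using $z\in B$), the relative weak openness of the $U_a$, and the isolation of the points of $A$ needed for (3) and for applying Corollary \ref{c:Ascoli-sufficient}---and they all go through.
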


\begin{proof}
Let $\{(e_i,e^\ast_i): i\in\NN\}$ be the standard biorthogonal basis in $\ell_1\times \ell'_1=\ell_1\times \ell_\infty$. Following \cite{GKZ}, set $\Omega :=\{ (m,n)\in \NN\times\NN : m< n\}$ and
\[
A:= \left\{ a_{m,n}:= \frac{1}{2}(e_m - e_n): (m,n)\in\Omega \right\} \subset S_{\ell_1}.
\]
For every $(m,n)\in\Omega$, define the following weak neighborhood of $a_{m,n}$
\[
\begin{split}
U_{m,n} & := \left\{ x\in B: \ |\left\langle e^\ast_m, a_{m,n} -x\right\rangle |< \frac{1}{10} \mbox{ and } |\left\langle e^\ast_n, a_{m,n} -x\right\rangle |< \frac{1}{10} \right\}\\
 & \; =  \left\{ x=(x_i) \in B: \ \left| \frac{1}{2} -x_m \right| <\frac{1}{10}  \mbox{ and } \left| \frac{1}{2} +x_n \right| <\frac{1}{10}\right\}.
\end{split}
\]
Then (1) holds trivially. Let us check (2). For every $k\not\in \{ m,n\}$ and each $x=(x_i)\in U_{m,n}$, one has
\[
|x_k|\leq \| x\| - |x_m| -|x_n| < 1- \left(\frac{1}{2}-\frac{1}{10}\right) -\left(\frac{1}{2}-\frac{1}{10}\right)=\frac{1}{5}.
\]
So, if $(m,n)\not= (k,l)$ and $x=(x_i)\in U_{m,n}$, we obtain either
\[
\left| \frac{1}{2} -x_k \right| >\frac{1}{2}-\frac{1}{5}=\frac{3}{10} \mbox{ if } k\not\in \{ m,n\}, \quad \mbox{ or }\quad \left| \frac{1}{2} +x_l \right| >\frac{3}{10} \mbox{ if } l\not\in \{ m,n\}.
\]
Hence $\mathrm{dist}\big( U_{m,n}, U_{k,l}\big) \geq 3/10 - 1/10=1/5$ for all $(m,n)\not= (k,l)$. This proves (2). In particular,  every point of $A$ is weakly isolated.

To prove (3) we note first that $0\in \overline{A}^{\, w}$ by Lemma 3.2 of \cite{GKZ}.
We provide a proof of this result to keep the paper self-contained. Let $U$ be a neighborhood of $0$ of the canonical form
\[
U=\left\{ x\in\ell_1 : |\langle\chi_k,x\rangle|<\epsilon , \mbox{ where } \chi_k =\big( \chi_k(i)\big)_{i\in\NN} \in S_{\ell_\infty} \mbox{ for }  1\leq k\leq s\right\}.
\]
Let $I$ be  an infinite subset of $\NN$ such that, for every $1\leq k\leq s$, either $\chi_k(i)>0$  for all $i\in I$, or $\chi_k(i)=0$  for all $i\in I$, or $\chi_k(i)<0$  for all $i\in I$. Take a natural number $N> 1/\epsilon$. Since $I$ is infinite, by induction,  one can find $(m,n)\in\Omega$ satisfying the following condition:  for every $1\leq k\leq s$ there is $0< t_k\leq N$ such that
\begin{equation}\label{equ:L1-1}
\frac{t_k -1}{N} \leq \min\big\{ |\chi_k(m)| , |\chi_k(n)| \big\} \leq \max\big\{ |\chi_k(m)| , |\chi_k(n)| \big\} \leq \frac{t_k}{N}.
\end{equation}
Then, by the construction of $I$, we obtain
\[
|\langle\chi_k, a_{m,n} \rangle|<1/N <\epsilon \mbox{ for every } 1\leq k\leq s.
\]
Thus $a_{m,n}\in U$, and hence $0\in \overline{A}^{\, w}$.

Now fix arbitrarily a nonzero $z=(z_i)\in\ell_1$ and consider the following three cases.

(a) There is $z_i\not\in \{ -1/2, 0, 1/2\}$, so $z\not\in A$. Set
\[
\epsilon :=\frac{1}{2}\min\left\{ |z_i|, \left|z_i -\frac{1}{2}\right|, \left|z_i +\frac{1}{2}\right|\right\} \mbox{ and } U:=\{ x\in\ell_1 : |\langle e^\ast_i, z-x\rangle| <\epsilon \}.
\]
Clearly, $U\cap A=\emptyset$ and $z\not\in\overline{A}^{\, w}$.

(b) Assume that $z\not\in A$ and  $z_i\in \{ -1/2, 0, 1/2\}$ for every $i\in\NN$. So there are distinct indices $i$ and $j$ such that $z_i =z_j \in\{ -1/2, 1/2\}$.  Set
\[
U:=\{ x\in\ell_1 : |\langle e^\ast_i +e^\ast_j, z-x\rangle| <1/10 \}.
\]
By the definition of $A$, we obtain  $U\cap A=\emptyset$, and hence $z\not\in\overline{A}^{\, w}$.

(c) Assume that $z\in A$. Then $z$ is not a cluster point of $A$ because it is weakly isolated.

Now (a)--(c) prove (3). Let us prove (4). Fix a weakly compact subset $C$ of $\ell_1$.
Assuming that  $C\cap U_a\not=\emptyset$ for an infinite subset $J\subset A$ we choose $x_j\in C\cap U_j$ for every $j\in J$. Since $\ell_1$ has the Schur property, $C$ is also compact in the norm topology of $\ell_1$. So we can assume that $x_j$ converges to some $x_\infty\in C$ in the norm topology. But this contradicts (2) that proves (4).

(5) immediately follows from (3) and (4).

(6): Clearly, the sequence $\{ a_{1,i}\}_{i>1} \subset A$ is equivalent to the unit basis of $\ell_1$.

(7): Assuming the converse let  $\{ a_{m_i,n_i}\}_{i\in\NN}$ be a faithfully indexed  weakly fundamental subsequence of $A$. Then only the next two cases are possible.

{\em Case 1. There is $k\in\NN$ and $i_1 < i_2<\dots$ such that $k=m_{i_1}=m_{i_2}=\dots $.} Passing to a subsequence we can assume that $m_1=m_2=\dots =k$ and $k<n_1<n_2<\dots$. Set
\[
\chi :=(\chi_j)_{j\in\NN}\in\ell_\infty, \mbox{ where } \chi_j =\left\{
\begin{split}
-1, & \mbox{ if } j\in\{ n_2, n_4,\dots\},\\
0, & \mbox{ if } j\not\in \{ n_2, n_4,\dots\}.
\end{split} \right.
\]
Then $\chi\in S_{\ell_\infty}$ and
\[
\langle\chi, a_{k,n_{2s}} - a_{k,n_{2s+1}}\rangle =\frac{1}{2}, \quad \forall s\in\NN.
\]
Thus the sequence $\{ a_{m_i,n_i}\}_{i\in\NN}$ is not fundamental, a contradiction.

{\em Case 2. $m_i\to\infty$ and $n_i\to\infty$.} Passing to a subsequence if it is needed, we can assume that
\[
m_1<n_1 <m_2< n_2<\dots .
\]
Defining $\chi\in S_{\ell_\infty}$ as in Case 1, we obtain
\[
\langle\chi, a_{m_{2s},n_{2s}} - a_{m_{2s+1},n_{2s+1}}\rangle =\frac{1}{2}, \quad \forall s\in\NN.
\]
Thus the sequence $\{ a_{m_i,n_i}\}_{i\in\NN}$ is not  weakly fundamental  also in this case.

Therefore $A$ does not have a  weakly fundamental subsequence.

(8):  The space $\overline{A}^{\, w}$ is not Ascoli by (5) and Corollary \ref{c:Ascoli-sufficient}, and $B_w$ is not Ascoli by (1)-(4) and Proposition \ref{p:Ascoli-sufficient}.
\end{proof}







Recall that a (normalized) sequence $(x_n)$ in a Banach space $E$ is said to be equivalent to the standard basis of $\ell_1$, or simply called an $\ell_1$-sequence, if for some $\theta>0$
\[
\left\| \sum_{i=1}^n c_i x_i\right\|\geq \theta\cdot \sum_{i=1}^n |c_i|,
\]
for any natural number $n$ and any scalars $c_i\in\mathbb{R}$. We also call such a sequence  a $\theta$-$\ell_1$-sequence if we want to specify the constant
in the definition.

We need some measure-theoretic preparations. Let $(T,\Sigma,\mu)$ be a probability measure space. Measurable functions $g_n: T\to\er$ are said to be {\em stochastically independent with respect to} $\mu$ if
\[
\mu\left( \bigcap_{n\leq k} g_n^{-1}(B_n)\right)=\prod_{n\leq k} \mu\left(g_n^{-1}(B_n)\right),
\]
for every $k$ and any Borel sets $B_n\sub\er$; see e.g.\ Fremlin \cite[272]{MT2}, for basic facts concerning independence.  Recall (see \cite[272Q]{MT2}) that, if integrable functions $f,g:T\to\er$ are independent with respect to $\mu$, then $\int_T f\cdot g\dx\mu=\left(\int_Tf\dx\mu\right) \cdot \left(\int_Tg \dx\mu\right)$.
\begin{lemma}\label{l:Ascoli-Im-mu}
Let $(T,\Sigma,\mu)$ and $(S,\Theta,\nu)$ be probability measure spaces and let $\Phi:T\to S$ be a measurable mapping such that $\Phi[\mu]=\nu$, that is $\mu(\Phi^{-1}(E)) =\nu(E)$ for every $E\in\Theta$. If $(p_n)_n$ be a sequence of measurable functions $S\to\IR$ which is stochastically independent with respect to $\nu$, then the functions $g_n=p_n\circ \Phi$ are stochastically independent with respect to $\mu$.
\end{lemma}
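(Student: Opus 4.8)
The plan is to verify the defining equation for stochastic independence by a direct computation, transporting everything through $\Phi$ by means of the two relations at our disposal: the pushforward equation $\mu(\Phi^{-1}(E))=\nu(E)$ and the elementary fact that taking preimages commutes with forming intersections. No probabilistic machinery beyond the definition is needed; the lemma is exactly the statement that the image measure preserves independence.

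First I would fix an arbitrary $k\in\NN$ and arbitrary Borel sets $B_1,\dots,B_k\subseteq\IR$. Since each $p_n$ is measurable and $B_n$ is Borel, each set $p_n^{-1}(B_n)$ belongs to $\Theta$, and hence so does their finite intersection; this is the only measurability point that must be checked, and it is what guarantees that $\nu$ may legitimately be evaluated on these sets. Writing $g_n=p_n\circ\Phi$, I would then record the set-theoretic identity
\[
\bigcap_{n\leq k} g_n^{-1}(B_n)=\bigcap_{n\leq k}\Phi^{-1}\big(p_n^{-1}(B_n)\big)=\Phi^{-1}\Big(\bigcap_{n\leq k}p_n^{-1}(B_n)\Big),
\]
where the first equality is $(p_n\circ\Phi)^{-1}(B_n)=\Phi^{-1}(p_n^{-1}(B_n))$ applied termwise and the second is the commutation of $\Phi^{-1}$ with intersections.

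The heart of the argument is then a short chain of equalities. Applying $\mu$ and using the pushforward relation on the set $\bigcap_{n\leq k}p_n^{-1}(B_n)\in\Theta$ gives
\[
\mu\Big(\bigcap_{n\leq k} g_n^{-1}(B_n)\Big)=\nu\Big(\bigcap_{n\leq k}p_n^{-1}(B_n)\Big).
\]
The hypothesis that $(p_n)_n$ is stochastically independent with respect to $\nu$ converts the right-hand side into $\prod_{n\leq k}\nu\big(p_n^{-1}(B_n)\big)$, and applying the pushforward relation once more to each factor, $\nu\big(p_n^{-1}(B_n)\big)=\mu\big(\Phi^{-1}(p_n^{-1}(B_n))\big)=\mu\big(g_n^{-1}(B_n)\big)$, yields
\[
\mu\Big(\bigcap_{n\leq k} g_n^{-1}(B_n)\Big)=\prod_{n\leq k}\mu\big(g_n^{-1}(B_n)\big).
\]
As $k$ and the Borel sets $B_n$ were arbitrary, this is precisely the stochastic independence of the $g_n$ with respect to $\mu$.

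I do not expect a genuine obstacle here: the whole proof is bookkeeping built on the two displayed relations, and the only subtlety — ensuring the sets to which $\nu$ is applied lie in $\Theta$ — is immediate from measurability of the $p_n$. The one thing I would be careful to state explicitly is that $\Phi[\mu]=\nu$ is used \emph{twice}, once to move the joint probability and once to move each marginal, so that both sides of the independence identity are expressed intrinsically in terms of $\mu$ and the $g_n$.
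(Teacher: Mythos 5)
Your proof is correct and complete. The chain of equalities is exactly right: the identity $\bigcap_{n\leq k} g_n^{-1}(B_n)=\Phi^{-1}\bigl(\bigcap_{n\leq k}p_n^{-1}(B_n)\bigr)$, the pushforward relation applied once to the joint event and once to each marginal, and the independence hypothesis for $(p_n)_n$ together give precisely the defining product formula for the $g_n$ with respect to $\mu$; you also correctly flag the only measurability point, namely that the sets fed to $\nu$ lie in $\Theta$. The one difference from the paper is that the paper offers no argument at all: it declares the lemma standard and cites Theorem 272G of Fremlin's \emph{Measure Theory}, vol.~2, which handles independence under composition/image measures in greater generality (for families of random variables and distributions). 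Your route is the natural self-contained verification of the special case actually needed, and it buys transparency — the reader sees that the entire content is ``preimages commute with intersections'' plus two applications of $\Phi[\mu]=\nu$ — at the cost of a few lines that the authors chose to outsource to a reference.
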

Lemma \ref{l:Ascoli-Im-mu} is standard and follows for instance from Theorem 272G in   \cite{MT2}.

In the proof of crucial Proposition \ref{p:Ascoli-Main} we essentially use the following version of the Riemann-Lebesgue lemma, which is mentioned in Talagrand's \cite{Ta}, page 3.

\begin{theorem} \label{t:Ascoli-Talagr}
Let $(T,\Sigma,\mu)$ be any probability space and let $(g_n)_n $ be a stochastically independent uniformly bounded sequence of measurable functions $T\to\er$ with
$\int_T g_n\dx\mu=0$ for every $n$. Then
\[\lim_{n\to\infty} \int_T f\cdot g_n\dx\mu=0,\]
for every bounded measurable function $f:T\to\er$.
\end{theorem}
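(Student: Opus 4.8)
The plan is to observe that the hypotheses force $(g_n)_n$ to be an orthogonal sequence in $L^2(\mu)$, after which the statement reduces to a one-line application of Bessel's inequality. First I would record the two routine reductions available on a probability space: since $\mu(T)=1$ and the sequence is uniformly bounded, say $|g_n|\le M$ for a common constant $M$, every $g_n$ lies in $L^2(\mu)$ with $\|g_n\|_2\le M$, and likewise every bounded measurable $f:T\to\er$ lies in $L^2(\mu)$. So it suffices to prove the conclusion for $f\in L^2(\mu)$.

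The key observation is orthogonality. For $m\ne n$ the functions $g_m$ and $g_n$ are independent (this is contained in the joint independence assumption) and bounded, hence integrable; so by the consequence of independence recalled above, namely \cite[272Q]{MT2}, we have $\int_T g_m g_n\dx\mu=\left(\int_T g_m\dx\mu\right)\left(\int_T g_n\dx\mu\right)=0$, using the zero-mean hypothesis. Thus $(g_n)_n$ is a (not necessarily normalized) orthogonal system in $L^2(\mu)$.

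Then I would discard those indices for which $g_n=0$ almost everywhere, since for those $\int_T f g_n\dx\mu=0$ trivially, and normalize the remaining functions to $e_n:=g_n/\|g_n\|_2$, obtaining an orthonormal system. Applying Bessel's inequality to $f\in L^2(\mu)$ gives $\sum_n |\langle f,e_n\rangle|^2\le \|f\|_2^2$. Because $|\langle f,g_n\rangle|^2=\|g_n\|_2^2\,|\langle f,e_n\rangle|^2\le M^2\,|\langle f,e_n\rangle|^2$, summation yields $\sum_n \left|\int_T f g_n\dx\mu\right|^2\le M^2\|f\|_2^2<\infty$. The general term of a convergent series tends to zero, so $\int_T f g_n\dx\mu\to 0$, which is exactly the desired conclusion.

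I do not expect a genuine obstacle here: once orthogonality is established, the result is immediate from Bessel's inequality, and the passage from bounded $f$ to the $L^2$ setting is automatic because $\mu$ is finite. The only point needing a little care is the possibility that some $g_n$ vanish almost everywhere, which is dealt with by handling those terms separately before normalizing.
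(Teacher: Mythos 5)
Your proof is correct. There is actually no in-paper argument to compare it against: the paper states Theorem \ref{t:Ascoli-Talagr} as a known version of the Riemann--Lebesgue lemma and simply cites Talagrand \cite{Ta}, p.~3, so your write-up supplies a proof where the paper gives none. Your reasoning is sound at every step: joint independence gives pairwise independence, which together with \cite[272Q]{MT2} and the zero-mean hypothesis yields $\int_T g_m g_n \dx\mu = 0$ for $m\neq n$, i.e.\ orthogonality in $L^2(\mu)$; since $\mu$ is a probability measure and $|g_n|\le M$, both $f$ and the $g_n$ lie in $L^2(\mu)$ with $\|g_n\|_2\le M$; and Bessel's inequality for the normalized nonvanishing $g_n$'s gives $\sum_n \left| \int_T f g_n \dx\mu \right|^2 \le M^2 \|f\|_2^2 < \infty$, so the general term tends to zero (the a.e.-zero $g_n$'s contribute zero terms, so discarding them before normalizing is harmless). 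It is worth noting what your route buys: independence and the centering are used \emph{only} to produce orthogonality, so you have in fact proved a more general statement -- the conclusion holds for any sequence that is orthogonal and norm-bounded in $L^2(\mu)$, with joint independence weakened to pairwise independence and uniform boundedness weakened to $\sup_n \|g_n\|_2 < \infty$. This is precisely the strength needed in the proof of Proposition \ref{p:Ascoli-Main}, and your self-contained derivation could replace the external citation.
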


Finally, let us recall the following fact, see e.g.\ \cite{Ta}, 1-2-5.

\begin{lemma}\label{image_measure}
Let $\Phi$ be a continuous surjection of a compact space $K$ onto a compact space $L$.
If $\lambda$ is a regular probability Borel measure on $L$ then there exists a regular probability Borel measure $\mu$ on $K$ such that $\Phi[\mu]=\lambda$, that
is $\mu(\Phi^{-1}(B))=\lambda(B)$ for every Borel set $B\sub L$.
\end{lemma}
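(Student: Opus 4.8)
The plan is to deduce this from the Riesz--Markov representation theorem together with a Hahn--Banach extension, which is the standard route for lifting a measure through a continuous surjection of compact spaces. First I would recall the identification of regular probability Borel measures on a compact space with those bounded linear functionals $\Lambda$ on the space of continuous functions satisfying $\Lambda(1)=\|\Lambda\|=1$ (such functionals being exactly the positive, norm-one, normalized ones). The central object is then the adjoint map $\Phi^\ast\colon C(L)\to C(K)$, $\Phi^\ast(f):=f\circ\Phi$. Because $\Phi$ is a continuous \emph{surjection}, $\Phi^\ast$ is an isometric linear embedding, since
\[
\|f\circ\Phi\|_\infty=\sup_{x\in K}|f(\Phi(x))|=\sup_{y\in L}|f(y)|=\|f\|_\infty ,
\]
where surjectivity is what gives the second equality.

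Next, on the subspace $\Phi^\ast(C(L))$ of $C(K)$ I would define $\Lambda_0(f\circ\Phi):=\int_L f\dx\lambda$; this is well defined and linear by the injectivity of $\Phi^\ast$, and it has norm exactly $1$ because $\Phi^\ast$ is isometric and $\lambda$ is a probability measure. The Hahn--Banach theorem then furnishes a linear extension $\Lambda\colon C(K)\to\IR$ with $\|\Lambda\|=1$. Since $1_L\circ\Phi=1_K$, we automatically get $\Lambda(1_K)=\int_L 1\dx\lambda=1$.

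The delicate point, and the step I expect to be the main obstacle, is that Hahn--Banach preserves the norm but \emph{not} positivity, so a priori $\Lambda$ need not be a positive functional and hence need not represent a measure. The resolution is the elementary but essential observation that any bounded functional $\Lambda$ with $\Lambda(1_K)=\|\Lambda\|=1$ is automatically positive: if $g\in C(K)$ satisfies $0\le g\le 1_K$, then $0\le 1_K-g\le 1_K$ gives $\|1_K-g\|_\infty\le 1$, whence $1-\Lambda(g)=\Lambda(1_K-g)\le\|\Lambda\|\le 1$, so $\Lambda(g)\ge 0$; scaling extends this to all $g\ge 0$. By the Riesz representation theorem, $\Lambda$ is then represented by a unique regular Borel measure $\mu$ on $K$, and $\mu(K)=\Lambda(1_K)=1$, so $\mu$ is a probability measure.

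Finally I would verify $\Phi[\mu]=\lambda$. For every $f\in C(L)$ the change-of-variables formula yields
\[
\int_L f\dx(\Phi[\mu])=\int_K f\circ\Phi\dx\mu=\Lambda(f\circ\Phi)=\int_L f\dx\lambda .
\]
Since $\Phi[\mu]$ and $\lambda$ are regular Borel measures on the compact space $L$ that agree on all of $C(L)$, the uniqueness clause of the Riesz representation theorem forces $\Phi[\mu]=\lambda$, that is $\mu(\Phi^{-1}(B))=\lambda(B)$ for every Borel $B\sub L$. The one routine point to confirm along the way is that the set function $B\mapsto\mu(\Phi^{-1}(B))$ is indeed a regular Borel measure on $L$; this follows from the continuity of $\Phi$ (so preimages of Borel sets are Borel) together with inner regularity of $\mu$ and the fact that $\Phi$ maps compact sets to compact sets.
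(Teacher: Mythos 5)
Your proof is correct and complete. There is, however, nothing in the paper to compare it against: the paper states this lemma as a known fact and simply cites Talagrand's memoir (1-2-5), giving no argument of its own, so you have supplied a proof the authors chose to omit. Your route is the standard functional-analytic one: transport $\lambda$ to the functional $f\circ\Phi\mapsto\int_L f\,{\rm d}\lambda$ on the isometric copy $\Phi^{\ast}(C(L))\subset C(K)$, extend by Hahn--Banach with norm one, and recover a measure via Riesz representation. You correctly isolate and resolve the one genuinely delicate point --- Hahn--Banach does not preserve positivity, but the normalization $\Lambda(1_K)=\|\Lambda\|=1$ forces it --- and you also attend to the secondary issue that the image measure $\Phi[\mu]$ is itself regular, which is what licenses the appeal to the uniqueness clause when concluding $\Phi[\mu]=\lambda$. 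For the record, an equally standard alternative (closer in spirit to what one finds in the measure-theoretic literature) avoids Hahn--Banach entirely: the push-forward map $\Phi_{\ast}$ between the sets of regular Borel probability measures, each compact convex in its weak$^{\ast}$ topology, is affine and weak$^{\ast}$ continuous; its image contains every point mass $\delta_y$ (since $\Phi$ is onto), hence contains the weak$^{\ast}$ closed convex hull of these, which by Krein--Milman is everything. Your argument has the advantage of constructing the lift by an explicit extension; the convexity argument trades that for not having to discuss positivity at all.
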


\begin{proposition} \label{p:Ascoli-Main}
If a Banach space $E$ contains an isomorphic copy of $\ell_1$, then $B_w$ is not an Ascoli space.
\end{proposition}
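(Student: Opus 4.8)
The plan is to verify the hypotheses of Proposition \ref{p:Ascoli-sufficient} for $B_w$, exactly as was done for $\ell_1$ itself in Proposition \ref{p:Ascoli-L1-weak}, the only serious new difficulty being that a general $E\supseteq\ell_1$ need not have the Schur property, so weakly compact sets are no longer norm compact. First I would fix an isomorphic embedding of $\ell_1$ and extract a sequence $(x_n)\subset S_E$ equivalent to the unit vector basis, with lower estimate constant $\theta>0$; since $(x_n)$ is equivalent to the $\ell_1$-basis, the coordinate functionals of the spanning copy extend, by Hahn--Banach, to a uniformly bounded biorthogonal system $(x_n^\ast)\subset E'$ with $x_n^\ast(x_m)=\delta_{nm}$. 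Mimicking Proposition \ref{p:Ascoli-L1-weak} I set $a_{m,n}=\tfrac12(x_m-x_n)\in B$, $A=\{a_{m,n}:m<n\}$, and let $U_{m,n}$ be the weak neighbourhood of $a_{m,n}$ cut out inside $B$ by $|x_m^\ast(\cdot)-\tfrac12|<\tfrac1{10}$ and $|x_n^\ast(\cdot)+\tfrac12|<\tfrac1{10}$. Condition (i) is immediate, and condition (iii) (that $0$ is a weak cluster point of $A$) is elementary: a basic weak neighbourhood of $0$ involves only finitely many functionals $f_1,\dots,f_s\in E'$, the points $(f_k(x_n))_{k\le s}\in\mathbb{R}^s$ are bounded, so two indices $m<n$ satisfy $f_k(x_m)\approx f_k(x_n)$ for all $k$, whence $a_{m,n}$ lies in that neighbourhood.

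The whole weight of the argument falls on condition (ii): every weakly compact $C\subset B$ meets only finitely many $U_{m,n}$. Here the norm-separation trick of Proposition \ref{p:Ascoli-L1-weak} is unavailable, and this is where the probabilistic input enters. The idea is to realise the evaluation functions of the $\ell_1$-sequence as a genuinely independent system. I would consider the weak$^\ast$-compact dual ball $K=(B_{E'},\sigma(E',E))$ and the continuous map $\Phi\colon K\to[-1,1]^{\mathbb{N}}$, $\Phi(f)=(f(x_n))_n$, whose coordinate projections satisfy $\pi_n\circ\Phi=\widehat{x_n}$, where $\widehat{x_n}(f)=f(x_n)$. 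Because $(x_n)$ is an $\ell_1$-sequence, the image $\Phi(K)$ is ``$\theta$-spread'' in the cube, enough to carry a product (Bernoulli-type) measure under which the coordinates are independent and centred; pulling this measure back through the surjection $\Phi$ by Lemma \ref{image_measure} and invoking Lemma \ref{l:Ascoli-Im-mu}, I obtain a Radon probability $\mu$ on $K$ for which a subsequence of the centred evaluation functions $\widehat{x_n}$ is stochastically independent, uniformly bounded, and of mean zero.

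With $\mu$ in hand the contradiction is engineered as follows. Suppose $C$ were weakly compact and met $U_{m_j,n_j}$ for infinitely many distinct pairs; choose witnesses $c_j\in C\cap U_{m_j,n_j}$, and by Eberlein--\v{S}mulian pass to a subsequence with $c_j\to c$ weakly, so the functions $\widehat{c_j}(f)=f(c_j)$ converge to $\widehat c$ pointwise on $K$ and are uniformly bounded. The plan is to pair the independent system against the witnesses: the membership $c_j\in U_{m_j,n_j}$ should keep the covariance-type integral $\int_K\widehat{x_{n_j}}\cdot\widehat{c_j}\dx\mu$ bounded away from $0$, whereas splitting $\int_K\widehat{x_{n_j}}\,\widehat{c_j}\dx\mu=\int_K\widehat{x_{n_j}}(\widehat{c_j}-\widehat c)\dx\mu+\int_K\widehat{x_{n_j}}\,\widehat c\dx\mu$ makes the first term vanish by dominated convergence and the second term vanish by Talagrand's Riemann--Lebesgue Lemma \ref{t:Ascoli-Talagr} applied to the fixed bounded function $\widehat c$ and the independent mean-zero system $\widehat{x_{n_j}}$. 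This contradiction establishes (ii), after which Proposition \ref{p:Ascoli-sufficient} gives that $B_w$ is not Ascoli, which is precisely the implication (i)$\Rightarrow$(iv) of Theorem \ref{t:Ascoli-Main}.

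The main obstacle, and the step I would spend the most care on, is twofold: first, making rigorous the passage ``$\ell_1$-sequence $\Rightarrow$ $\Phi(K)$ supports a nondegenerate product measure'' (a Rosenthal-type selection underlying the construction of $\mu$); and second, reconciling the single biorthogonal functional $x_{n_j}^\ast$ appearing in the definition of $U_{m_j,n_j}$ with the \emph{averaged}, independent evaluation functions $\widehat{x_{n_j}}$, so that the $U$-conditions genuinely yield the uniform lower bound on $\int_K\widehat{x_{n_j}}\widehat{c_j}\dx\mu$. This matching is exactly the point at which the absence of Schur's property is circumvented by measure theory, and it is for this bookkeeping that the machinery of Lemmas \ref{l:Ascoli-Im-mu} and \ref{image_measure} together with Theorem \ref{t:Ascoli-Talagr} is designed.
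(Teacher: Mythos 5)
Your overall strategy is the paper's: verify the hypotheses of Proposition \ref{p:Ascoli-sufficient}, and prove condition (ii) by combining Eberlein--\v{S}mulian, dominated convergence, and Talagrand's Riemann--Lebesgue Theorem \ref{t:Ascoli-Talagr} against a stochastically independent system attached to the $\ell_1$-sequence. But the two points you flag as remaining ``bookkeeping'' are genuine gaps, and the second one is fatal to the proof as written. You define $U_{m,n}$ by biorthogonal functionals $x_m^\ast,x_n^\ast$ obtained from \emph{arbitrary} Hahn--Banach extensions, and then need membership $c_j\in U_{m_j,n_j}$ to force the covariance integrals $\int_K\widehat{x_{n_j}}\,\widehat{c_j}\dx\mu$ away from $0$. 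Nothing of the sort follows; in fact condition (ii) itself can fail for such $U_{m,n}$. Take $E=\ell_1\oplus_1\ell_2$, $x_n=(e_n,0)$, and the norm-one biorthogonal extensions $x_{2k}^\ast=(e_{2k}^\ast,u_k)$, $x_{2k+1}^\ast=(e_{2k+1}^\ast,-u_k)$, where $(u_k)$ is the orthonormal basis of $\ell_2$. Then $c_k=(0,\tfrac{1}{2}u_k)\in B$ satisfies $x_{2k}^\ast(c_k)=\tfrac{1}{2}$ and $x_{2k+1}^\ast(c_k)=-\tfrac{1}{2}$, so $c_k\in U_{2k,2k+1}$, while $c_k\to 0$ weakly; hence the weakly compact set $\{0\}\cup\{c_k: k\in\NN\}$ meets infinitely many of your neighbourhoods. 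So the choice of the functionals defining $U_{m,n}$ cannot be delegated to Hahn--Banach: this is exactly the point where the measure must enter the definition of $\U$, not merely the contradiction argument.

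The repair, which is in essence what the paper does, is to define the neighbourhoods directly by ``covariance'' functionals. Your first gap closes as follows: the $\theta$-lower estimate shows, via Hahn--Banach, that for every sign pattern $(\eps_n)$ the functional $\sum_i c_ix_i\mapsto\theta\sum_i\eps_ic_i$ has norm at most $1$ on the span of the $x_n$; hence $\{-\theta,\theta\}^{\NN}\subset\Phi(K)$, and pulling the Bernoulli measure on $\{-\theta,\theta\}^{\NN}$ back through $\Phi$ restricted to $\Phi^{-1}\bigl(\{-\theta,\theta\}^{\NN}\bigr)$ (Lemmas \ref{image_measure} and \ref{l:Ascoli-Im-mu}) yields a measure $\mu$ on $K$ under which the whole sequence $(\widehat{x_n})$ is independent, mean zero, uniformly bounded, with $\int_K\widehat{x_n}\widehat{x_m}\dx\mu=\theta^2\delta_{nm}$; no Rosenthal-type subsequence selection is needed. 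Now set $\phi_n(c):=\int_K\widehat{x_n}\,\widehat{c}\dx\mu$; these are genuine elements of $E'$ (indeed $\theta^{-2}\phi_n$ is a bounded biorthogonal system, so this is the \emph{correct} choice of extensions), and defining $U_{m,n}$ inside $B$ by $|\phi_m(\cdot)-\theta^2/2|<\theta^2/10$ and $|\phi_n(\cdot)+\theta^2/2|<\theta^2/10$ makes (i) a computation and makes membership in $U_{m_j,n_j}$ yield exactly the uniform lower bound $\bigl|\int_K\widehat{x_{n_j}}\widehat{c_j}\dx\mu\bigr|>\theta^2/4$ that your Talagrand-plus-dominated-convergence argument then contradicts; your argument for (iii) is correct and unaffected. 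The paper sidesteps the difficulty differently: it constructs the independent sequence $(g_n)$ abstractly on $\beta\NN$ first, transports the copy of $\ell_1$ into $C(\beta\NN)$ using the $1$-injectivity of that space, and defines $U_{m,n}$ as preimages of the integral conditions $|\int fg_i\dx\mu|>1/4$ --- it never attempts to use biorthogonal functionals at all.
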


\begin{proof}
We show that $B_w$ is not Ascoli in four steps.

{\em Step 1}. Since the Hilbert cube $H=[0,1]^\NN$ is separable, one can find a continuous function $\Phi_0$ from the discrete space $\NN$ onto a dense subset of $H$. By Theorem 3.6.1 of \cite{Eng}, we can extend $\Phi_0$ to a continuous map $\Phi:\beta\NN \to H$. As $\Phi_0(\NN)$ is dense in $H$, we obtain that $\Phi(\beta\NN) =H$.
Let $\pi_n :H\to [-1,1]$ be the projection onto the $n$th coordinate, and let $\lambda=\prod_n \mathrm{m}_n$ be  the product measure of the normalized Lebesgue measures $\mathrm{m}_n$ on the interval $[-1,1]$. Then the sequence $(\pi_n)$ is stochastically independent with respect to $\lambda$   and
\begin{equation} \label{equ:Ascoli-Ascoli-1}
\int_H \pi_n\dx\lambda =\int_H \pi_n \pi_m \dx\lambda =0, \mbox{ and } \int_H \pi_n^2\dx\lambda=\frac{1}{2}\int_{-1}^1 x^2 \dx x =\frac{1}{3},
\end{equation}
for all $n,m\in\NN$ and $n\not= m$. Moreover, the sequence $(\pi_n)_n$ is a $1\mbox{-}\ell_1$-sequence in $C(H)$. Indeed, for every $n\in\NN$ and each scalars $c_1,\dots,c_n\in\IR$, set
\[
x:=\big(\mathrm{sign}(c_1),\dots,\mathrm{sign}(c_n),0,\dots\big)\in H.
\]
Then $\sum_{i\leq n} c_i \pi_i(x) = \sum_{i\leq n} |c_i|$. Thus $(\pi_n)$ is a $1\mbox{-}\ell_1$-sequence in $C(H)$.

{\em Step 2.}
Let $\mu$ be a measure on $\beta\NN$ such that $\Phi[\mu]=\lambda$, see Lemma \ref{image_measure}. Set $g_n :=\pi_n\circ \Phi$ for every $n\in\NN$. Then the sequence $(g_n)$ is stochastically independent with respect to $\mu$ by Lemma \ref{l:Ascoli-Im-mu}. As $\Phi$ is surjective, $(g_n)$ is also a $1\mbox{-}\ell_1$-sequence in $C(\beta\NN)$.

{\em Step 3.}  Let $Y$ be a subspace of $E$ isomorphic to $\ell_1$ and let $T_1:Y\to \ell_1$ be an isomorphism. For every $n\in\NN$ choose $x_n\in Y$ such that $T_1(x_n)=e_n$, where $(e_n)$ is the standard coordinate basis in $\ell_1$. In turn, as $(g_n)$ is a $1\mbox{-}\ell_1$-sequence in $C(\beta\NN)$, there is an isometric embedding
$T_2:\ell_1\to C(\beta\NN)$, sending $e_n$ to $g_n$.

As the space $C(\beta\NN)$ is 1-injective, the operator $T=T_2\circ T_1:Y\to C(\beta\NN)$ can be extended to an operator $\widetilde{T}:E\to C(\beta\NN)$ having the same norm;
cf.\  Proposition 5.10 of \cite{fabian-10}.

{\em Step 4.} Set $d:=\sup\{ \| x_n\|_E : n\in\NN\}$ and $\gamma :=\sup\{ \| \widetilde{T}(x)\| : x\in d B_E\}$. Let $h_{m,n}=(g_m-g_n)/2$ for $n,m\in\NN, n>m,$ and set
\[
V_{m,n}=\left\{ f\in \gamma B_{C(\beta\NN)} : \left| \int_{\beta\NN} f \cdot g_i\dx\mu\right| >1/4, \mbox{ for } i=m,n\right\}.
\]
Denote by $T^+$ the map $\widetilde{T}$ from $E_w$ into $C_w(\beta\NN)$.  Clearly, $T^+$ is also continuous. Finally we set
\[
A:=\{ a_{m,n}:=(x_m-x_n)/2 : \; 1\leq m<n \},
\]
and
\[
\U := \{ U_{m,n} :=(T^+)^{-1} (V_{m,n}) \cap dB_E : \; 1\leq m<n \}.
\]
Now the following claim finishes the proof.

{\bf Claim}. {\em The ball $dB_E$ is not Ascoli in the weak topology.}

To prove the claim it is enough to check (i)-(iii) of Proposition \ref{p:Ascoli-sufficient} for the set $A$ and the family $\U$.

(i): To  show that $a_{m,n}\in U_{m,n}$ it is enough to prove that $h_{m,n}\in V_{m,n}$. But this follows from  (\ref{equ:Ascoli-Ascoli-1}) since
\[
2\int_{\beta\NN} h_{m,n}\cdot g_n\dx\mu =\int_{\beta\NN} g_m\cdot g_n\dx\mu -\int_{\beta\NN} g_n^2\dx\mu=-\frac{2}{3} = -2\int_{\beta\NN} h_{m,n}\cdot g_m\dx\mu .
\]
(iii):  The zero function $\mathbf{0}$ is the weak cluster point of $A$ by Proposition \ref{p:Ascoli-L1-weak}.

Let us check (ii), i.e. if $C\sub dB_E$ is weakly compact, then $C$ can meet only finite number of $U_{m,n}$'s.
Suppose otherwise: let $x_i \in C\cap U_{m_i,n_i}$, where the pairs $(m_i, n_i)$ are distinct. As $m_i <n_i$ we may assume also that $n_i\neq n_{i'}$ for $i\neq i'$.  Since $C$ is weakly compact it is Fr\'{e}chet--Urysohn by the Eberlein--\v{S}mulyan theorem \cite[3.109]{fabian-10}. So we can further assume that $x_i$ converge weakly to some $x\in C$.  Then also the functions $f_i :=T^+(x_i)\in V_{m_i, n_i}$ converge weakly to $f:=T^+(x)\in T^+(C)\subset \gamma B_{C(\beta\NN)}$, and they are uniformly bounded on $\beta\NN$ and $f_i\to f$ pointwise.

Take arbitrarily $0<\delta <1/16(1+ \gamma +2\gamma^2)$. By Theorem \ref{t:Ascoli-Talagr}, there is $N_1\in\NN$ such that  $\left| \int_{\beta\NN} f\cdot g_{n_i}\dx\mu\right| <\delta$ for all $i> N_1$.  By the classical Egorov theorem,
$f_i$ converge almost uniformly to $f$, i.e. there is $B\subseteq \beta\NN$ such that $\mu(\beta\NN\setminus B)<\delta$ and $f_i$ converge uniformly to $f$ on $B$. Take $N_2 >N_1$ such that $|f_i -f|< \delta$ on $B$ for all $i>N_2$. Taking into account that $|h|\leq\gamma$ for each $h \in \gamma B_{C(\beta\NN)}$, for every $i>N_2$ we obtain
\[
\begin{split}
\left| \int_{\beta\NN} f_i \cdot g_{n_i}\dx\mu \right| & \leq \left| \int_{\beta\NN} f_i\cdot g_{n_i}\dx\mu - \int_{\beta\NN} f \cdot g_{n_i}\dx\mu \right| + \left|\int_{\beta\NN} f \cdot g_{n_i}\dx\mu \right| \\
& \leq \int_{\beta\NN} |f_i-f|\cdot |g_{n_i}|\dx\mu + \delta \leq \int_B + \int_{\beta\NN \setminus B} + \delta \\
& \leq  \gamma\cdot \delta + 2\gamma^2 \cdot \delta + \delta =\delta (1+ \gamma +2\gamma^2) < 1/16.
\end{split}
\]
On the other hand, $f_i\in V_{m_i,n_i}$ implies $\left| \int_{\beta\NN} f_i\cdot g_{n_i}\dx\mu\right| >1/4$. This contradiction proves the claim.
\end{proof}

To prove Theorem \ref{t:Ascoli-Main} we need also the following simple lemma.

\begin{lemma}\label{l:Ascoli-sequential}
Let $E$ be a Banach space and let $B_w$ denote  the unit ball of $E$  equipped with the weak topology. For any function $f:B_w\to\IR$ the following are equivalent
\begin{itemize}
\item[{\rm (i)}] $f$ is sequentially continuous on $B_w$;
\item[{\rm (ii)}]  $f$ is continuous on every compact subset of $B_w$.
\end{itemize}
\end{lemma}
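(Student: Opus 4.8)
The plan is to prove the two implications separately, with the whole argument resting on the single nontrivial fact that weakly compact subsets of a Banach space are Fr\'echet--Urysohn in the weak topology (the same fact already invoked in the proof of Proposition \ref{p:Ascoli-Main} via the Eberlein--\v{S}mulian theorem \cite[3.109]{fabian-10}).

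First I would dispatch (ii)$\Rightarrow$(i), which needs no Banach space structure at all. Given a sequence $(x_n)$ in $B_w$ converging weakly to $x$, the set $K:=\{x_n:n\in\NN\}\cup\{x\}$ is weakly compact, since a convergent sequence together with its limit is compact in any topological space (cover the limit by an open set; it captures all but finitely many terms, and finitely many further sets finish the cover). By hypothesis $f|_K$ is continuous, and as $x_n\to x$ inside $K$ we get $f(x_n)\to f(x)$. Hence $f$ is sequentially continuous on $B_w$.

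The substantive direction is (i)$\Rightarrow$(ii). Fix a weakly compact $K\subseteq B_w$. By the Eberlein--\v{S}mulian theorem \cite[3.109]{fabian-10}, $K$ with the relative weak topology is Fr\'echet--Urysohn, and therefore sequential. Now observe that $f|_K$ is sequentially continuous: any sequence converging in the subspace $K$ also converges in $B_w$, so sequential continuity of $f$ transfers to $f|_K$. Finally I would appeal to the general topological fact that on a sequential space sequential continuity forces continuity; concretely, for any closed $C\subseteq\IR$ the preimage $(f|_K)^{-1}(C)$ is sequentially closed in $K$ (if points of the preimage converge to $y\in K$, then their images lie in $C$ and converge to $f(y)$, so $f(y)\in C$), hence closed because $K$ is sequential. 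Thus $f|_K$ is continuous, establishing (ii).

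The only real obstacle is the passage from weak compactness to the Fr\'echet--Urysohn property of $K$; once that is in hand, both the transfer of sequential continuity to $K$ and the ``sequential continuity $\Rightarrow$ continuity on sequential spaces'' step are routine. I expect the whole lemma to be short, serving mainly to bridge condition (iii) of Theorem \ref{t:Ascoli-Main} (sequential continuity implies continuity on $B_w$) with arguments phrased in terms of continuity on compacta.
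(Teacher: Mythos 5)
Your proposal is correct and follows essentially the same route as the paper: the substantive direction (i)$\Rightarrow$(ii) is handled exactly as in the paper's proof, by invoking the Eberlein--\v{S}mulian theorem to get the Fr\'echet--Urysohn (hence sequential) property of weakly compact sets and then showing preimages of closed sets under $f|_K$ are sequentially closed, hence closed. The only difference is that you spell out the converse implication (ii)$\Rightarrow$(i), which the paper dismisses as obvious; your argument for it is correct.
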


\begin{proof}
Let $f$ be sequentially continuous on $B_w$ and let $C$ be a compact subset of $B_w$.  For any closed set $H\sub\IR$,
the set $F=f^{-1}(H)\cap C$ is sequentially closed in $C$.  Hence $F$ is closed in $C$,  since  $C$, as  a weakly compact set, has the Frechet--Urysohn property  by the classical Eberlein--\v{S}mulian theorem.

We have checked that (i) imples (ii); the reverse implication is obvious.
\end{proof}

\begin{proof}[Proof of Theorem \ref{t:Ascoli-Main}]
(i)$\Rightarrow$(iv) follows from Proposition \ref{p:Ascoli-Main}. Theorem \ref{t:S-W-1} implies (iv)$\Rightarrow$(iii).  (iii)$\Rightarrow$(ii) follows from Lemma \ref{l:Ascoli-sequential}. Finally, the implication (ii)$\Rightarrow$(i) holds by \cite{Noble}.
\end{proof}


\section{On weakly sequentially continuous functions on the unit ball}


 Let $E$ be a Banach space containing an isomorphic copy of $\ell_1$ and let $B_w$ denote the unit ball in $E$ equipped with the weak topology. It follows from Theorem \ref{t:Ascoli-Main}  that $B_w$ is not a $k_\IR$-space which,  in view of Lemma \ref{l:Ascoli-sequential}, is equivalent to saying that there is a function $\Phi:B_w\to\er$ which is sequentially continuous but not continuous. We show below that such a function can be defined, in a sense, effectively by means of measure-theoretic properties of $\ell_1$-sequences of continuous functions.

 \begin{proposition} \label{2:1}
Let $K$ be a  compact space and let $(g_n)$ be a normalized $\theta\mbox{-}\ell_1$-sequence in the Banach space $C(K)$. Then there exists a regular probability measure $\mu$ on $K$ such that
\[\int_K |g_n-g_k| \dx\mu  \ge \theta/2  \mbox{ whenever } n\neq k.\]
\end{proposition}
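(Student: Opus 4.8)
The plan is to reformulate the existence of $\mu$ as a minimax equality and then reduce the whole statement to a purely combinatorial lower bound on sign sums. Throughout write $\|\cdot\|$ for the sup norm on $C(K)$ and recall that the hypothesis means $\|\sum_i c_i g_i\|\ge\theta\sum_i|c_i|$ for every finite scalar sequence $(c_i)$.

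First I would set $s:=\sup_{\mu\in P(K)}\inf_{n\ne k}\int_K|g_n-g_k|\dx\mu$, where $P(K)$ is the (convex, weak$^*$-compact) set of regular Borel probability measures on $K$. Since $\mu\mapsto\int_K d\dx\mu$ is affine and weak$^*$-continuous for each fixed $d\in C(K)$, for a fixed $\mu$ the infimum of $\int_K d\dx\mu$ over the family $\mathcal D=\{|g_n-g_k|:n\ne k\}$ coincides with its infimum over $\operatorname{conv}\mathcal D$. Applying Sion's minimax theorem to the bilinear pairing $(\mu,\nu)\mapsto\int_K\nu\dx\mu$ on $P(K)\times\operatorname{conv}\mathcal D$ (with $P(K)$ compact), and using that $\sup_{\mu\in P(K)}\int_K\nu\dx\mu=\|\nu\|$ because every $\nu\in\operatorname{conv}\mathcal D$ is nonnegative, I obtain $s=\inf_{\nu\in\operatorname{conv}\mathcal D}\|\nu\|$. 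As $\mu\mapsto\inf_{n\ne k}\int_K|g_n-g_k|\dx\mu$ is weak$^*$-upper semicontinuous on the compact $P(K)$, the supremum $s$ is attained by some regular probability measure, so it suffices to prove $s\ge\theta/2$, i.e. $\|\nu\|\ge\theta/2$ for every convex combination $\nu$ of the $|g_n-g_k|$.

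The core of the argument is this last lower bound. Fix $\nu=\sum_{j}\lambda_j|g_{n_j}-g_{k_j}|$ with distinct pairs $(n_j,k_j)$, $\lambda_j\ge0$, $\sum_j\lambda_j=1$. Using $|a|=\max_{\varepsilon=\pm1}\varepsilon a$ termwise and the fact that the signs may be chosen independently, I would rewrite $\max_{t\in K}\nu(t)=\max_{\varepsilon\in\{\pm1\}^{J}}\bigl\|\sum_j\lambda_j\varepsilon_j(g_{n_j}-g_{k_j})\bigr\|$, the passage from $\max_t$ to the sup norm being legitimate because the sign set is symmetric. Writing each inner combination as $\sum_i c_i(\varepsilon)g_i$ and invoking the $\ell_1$-estimate gives $\|\nu\|\ge\theta\max_\varepsilon\sum_i|c_i(\varepsilon)|$, so everything comes down to showing $\max_\varepsilon\sum_i|c_i(\varepsilon)|\ge1$ (which already yields the sharper bound $\|\nu\|\ge\theta$).

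To finish I would read the pairs $(n_j,k_j)$ as the edges of a weighted graph $G$ with edge weights $\lambda_j$ and use $\sum_i|c_i|=\max_{\sigma\in\{\pm1\}^V}\sum_i\sigma_ic_i$; optimizing first over $\sigma$ and then over $\varepsilon$ one checks that $\max_\varepsilon\sum_i|c_i(\varepsilon)|=2\,\mathrm{MaxCut}(G)$, where $\mathrm{MaxCut}(G)$ is the largest weight of a bipartition of the vertices. A random bipartition cuts each edge with probability $1/2$ (the endpoints $n_j\ne k_j$ are distinct), so $\mathrm{MaxCut}(G)\ge\tfrac12\sum_j\lambda_j=\tfrac12$ and hence $\max_\varepsilon\sum_i|c_i(\varepsilon)|\ge1$. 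I expect the two genuinely delicate points to be the careful justification of the minimax exchange (including attainment of the supremum, so as to produce an honest regular measure) and the sign/MaxCut computation identifying $\max_\varepsilon\sum_i|c_i|$ with twice the maximum cut; both the $\ell_1$-estimate and the reduction to convex combinations are then routine.
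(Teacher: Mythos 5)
Your proposal is correct --- both of the points you flag as delicate (the Sion exchange with attainment, and the sign/MaxCut identity) do go through --- but it takes a genuinely different route from the paper in each of its two halves. For producing the measure, the paper uses no minimax theorem: it sets $H=\overline{\operatorname{conv}}\{|g_n-g_k|:n\neq k\}$, proves $\|h\|\geq\theta/2$ for all $h\in H$, and then obtains $\mu$ by Hahn--Banach separation, taking a norm-one functional with $\int_K h\dx\mu\geq\theta/2$ on $H$ and replacing it by its variation $|\mu|$ (legitimate because every $h\in H$ is nonnegative); this is lighter machinery than Sion plus the upper-semicontinuity/attainment argument, though your version has the small bonus of producing a measure attaining the optimal value. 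For the core lower bound on $\|\nu\|$ over $\nu\in\operatorname{conv}\mathcal{D}$, the paper transports the problem to the Cantor cube: it uses the isometry $\ell_1\to C(\{-1,1\}^{\NN})$, $e_n\mapsto\pi_n$, integrates $\sum_i\alpha_i|\pi_{n_i}-\pi_{k_i}|$ against the product measure to find a point $t$ where this sum is at least $1$, and then extracts a subset $E$ of indices on which the \emph{signed} sum is at least $1/2$; that subset-extraction step is exactly what costs a factor of $2$ and limits the paper to the constant $\theta/2$. Your identity $\|\nu\|=\max_\varepsilon\bigl\|\sum_j\lambda_j\varepsilon_j(g_{n_j}-g_{k_j})\bigr\|$ (valid by the symmetry of the sign set) combined with the computation $\max_\varepsilon\sum_i|c_i(\varepsilon)|=2\,\mathrm{MaxCut}(G)\geq 1$ keeps all cut edges with appropriate signs instead of discarding half of them, and so yields the sharper conclusion $\|\nu\|\geq\theta$, hence a regular probability measure with $\int_K|g_n-g_k|\dx\mu\geq\theta$ for all $n\neq k$, strengthening the proposition. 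It is worth noting that the underlying randomness is the same in both proofs: the paper's integration over the product measure on $\{-1,1\}^{\NN}$ is precisely the expectation over your random bipartition, so your argument can be read as a tightened repackaging of the paper's Step 1, with the minimax step standing in for the paper's separation step.
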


\begin{proof}
Suppose that $(g_n)$ is $\theta$-equivalent to the standard basis $(e_n)$ in $\ell_1$. Put
\[
H=\overline{ {\rm conv} }\left( \{|g_n-g_k|: n\neq k\}\right) \sub C(K).
\]
Note that it is enough to check that $\|h\|\geq \theta/2$ for all $h\in H$ since in such a case, by the separation theorem,  there is a norm-one
$\mu\in C(K)^*$ such that $\int_K h\dx\mu\ge \theta/2$ for every $h\in H$. As $h\geq 0$ for $h\in H$, we can then replace the signed measure $\mu$ by its
variation $|\mu|$.

In turn,  the fact  that $\| h \|\geq \theta/2$  for $h\in H$ is implied by  the following.
\medskip

{\bf Claim}.  Suppose that $n_i\neq k_i$ for $i\le p$. then  for  any convex coefficients $\alpha_1,\ldots, \alpha_p$
\[
\left\| \sum_{i=1}^p  \alpha_i |g_{n_i}-g_{k_i}|  \right\| \geq\theta/2.
\]
\medskip

We shall verify the claim in two steps.

{\em Step 1.}
There is   $E\sub \{1,\ldots, p\}$ such that
\[
\left\| \sum_{i\in E} \alpha_i (e_{n_i} - e_{k_i})\right\|\geq 1/2.
\]

Indeed, if $L$ denotes the Cantor set $\{-1,1\}^\NN$, then  the projections $\pi_n:L\to \{-1,1\}$ form a  sequence in $C(L)$ which is a $1$-$\ell_1$-sequence,
 so  we have an isometric embedding $T:\ell_1\to C(L)$, where $Te_n=\pi_n$ for every $n\in\NN$.

Write $\lambda$ for the standard product measure on $L$. We calculate directly that $\int_K |\pi_n-\pi_k| \dx\lambda = 1$ for $n\neq k$ and therefore
\[
\left\| \sum_{i=1}^p \alpha_i|\pi_{n_i}-\pi_{k_i}|\right\| \geq \int_L \sum_{i=1}^p \alpha_i  |\pi_{n_i}-\pi_{k_i}| \dx\lambda=1.
\]
Hence there is $t\in L$ such that $\sum_{i=1}^p\alpha_i |\pi_{n_i}(t)-\pi_{k_i}(t)|\ge 1$. Examining the signs of summands we conclude that for some set $E\sub \{1,\ldots, p\}$ we have
\[
\left| \sum_{i\in E}\alpha_i (\pi_{n_i}(t)-\pi_{k_i}(t)) \right|\ge 1/2.
\]
This implies that
\[
\left\| \sum_{i\in E} \alpha_i (e_{n_i} - e_{k_i})\right\|= \left\| \sum_{i\in E} \alpha_i (Te_{n_i} -Te_{k_i}) \right\|\geq
\left| \sum_{i\in E } \alpha_i (\pi_{n_i}(t)-\pi_{k_i}(t)) \right| \geq 1/2.
\]

{\em Step 2.} Taking a set $E$ from Step 1  we conclude that
\[
\left\| \sum_{i=1}^p  \alpha_i |g_{n_i}-g_{k_i}|  \right\| \geq \left\| \sum_{i\in E } \alpha_i (g_{n_i}-g_{k_i}) \right\| \geq\theta\cdot  \left\| \sum_{i\in E } \alpha_i (e_{n_i}-e_{k_i}) \right\|\geq \theta/2.
\]
This verifies the claim and the proof is complete.
\end{proof}

\begin{example}\label{exa:Ascoli-1}
Suppose that $E$ is a Banach space containing an isomorphic copy of $\ell_1$. Then there is a function $\Phi:B_w\to\IR$ which is sequentially continuous but not continuous.
\end{example}

\begin{proof}
Let $K$ denote the dual unit ball $B_{E^\ast}$ equipped with the $weak^\ast$ topology. Write $Ix$ for the function on $K$ given by $Ix(x^\ast)=x^\ast(x)$ for $x^\ast\in K$.
Then $I:E\to C(K)$ is an isometric embedding.

Since $E$ contains a copy of $\ell_1$, there is a normalized sequence $(x_n)$ in $E$ which is a $\theta$-$\ell_1$-sequence for some $\theta>0$.
Then the functions $g_n=Ix_n$ form a $\theta$-$\ell_1$-sequence in $C(K)$. By Proposition \ref{2:1}  there is a probability measure $\mu$ on $K$  such that
$ \int_K |g_n-g_k| \dx\mu  \ge \theta/2$  whenever  $n\neq k$.

Define a function $\Phi$ on $E$ by $\Phi(x)=\int_K|Ix|\dx\mu.$ If $y_j\to y$ weakly in $E$ then $Iy_j\to I y$ weakly in $C(K)$, i.e.\ $(Iy_j)_j$ is a uniformly bounded sequence converging pointwise to $Iy$. Consequently, $\Phi(y_j)\to \Phi(y)$ by the Lebesgue dominated convergence theorem.
 Thus $\Phi$ is sequentially continuous.

We now check that $\Phi$ is not weakly continuous at $0$ on $B_w$. Consider a basic weak neighbourhood of $0\in B_w$ of the form
\[
V=\{x\in B_w: |x_j^\ast(x)| <\eps \mbox{ for } j=1,\ldots, r\}.
\]
Then there is an infinite set $N\sub\en$ such that $\big(x_j^\ast (x_n)\big)_{n\in N} $ is a converging sequence for every $j\le r$. Hence there are $n\neq k$ such that
$|x_j^*(x_n-x_k)|<\eps$ for every $j\le r$, which means that  $(x_n-x_k)/2 \in V$. On the other hand, $\Phi\big( (x_n-x_k)/2\big)\geq \theta/4$ which demonstrates that $\Phi$ is
not continuous at $0$.
\end{proof}

\section{Proof of Theorem \ref{t:Ascoli-E-weak} and final questions}

In order to prove Theorem \ref{t:Ascoli-E-weak} we need the following two results also of independent interest.
\begin{proposition} [\cite{GKKP}] \label{fre1}
Let $E$ be a metrizable lcs. Then  every bounded subset of $E$ is Fr\'echet--Urysohn in the weak topology of $E$ if and only if every bounded sequence in $E$ has a Cauchy subsequence in the weak topology of $E$.
\end{proposition}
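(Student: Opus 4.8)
The plan is to prove the two implications separately, with the elementary observation that a weakly Cauchy sequence clustering at a point must converge to it serving as the bridge. Precisely, if $(c_k)$ is weakly Cauchy then $f(c_k)$ converges for every $f\in E'$, say to $L(f)$; and if $x$ is a genuine weak cluster point of $(c_k)$ (i.e. $x\in\overline{\{c_k:k\ge K\}}^{\,w}$ for all $K$), then $f(x)$ is a cluster value of $(f(c_k))_k$, forcing $f(x)=L(f)$ for every $f$, so that $c_k\to x$ weakly. I would isolate this as a preliminary claim and use it in the direction $(ii)\Rightarrow(i)$.

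For $(ii)\Rightarrow(i)$, let $B$ be bounded, $A\subseteq B$, and let $x$ be a weak cluster point of $A$ (the case $x\in A$ being trivial). First I would invoke Fact \ref{t:Weak-Fan-tight}: since $E_w$ has countable fan tightness, hence countable tightness, there is a countable $D\subseteq A$ with $x\in\overline{D}^{\,w}$; as $E_w$ is $T_1$, the point $x$ is in fact a cluster point of every tail of any enumeration of $D$. I would then produce a subsequence of $D$ that is simultaneously weakly Cauchy and clusters at $x$, and conclude by the preliminary claim. Here the main obstacle appears: hypothesis $(ii)$ yields a weakly Cauchy subsequence, but an arbitrary subsequence need not retain $x$ as a cluster point. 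I would resolve this with a Whitley/Eberlein--\v{S}mulian-type diagonalization: whenever the functional limit $L$ of a chosen weakly Cauchy subsequence differs from $x$ in some direction $f_0$, the tail escapes a basic neighbourhood $U=\{y:|f_0(y-x)|<\delta\}$, yet $x$ still clusters at the infinite set $D\cap U$; iterating this refinement across directions and diagonalizing produces a single weakly Cauchy subsequence whose functional limit agrees with $x$ on all of $E'$. (One may first pass to the closed separable subspace generated by $D\cup\{x\}$, where $\sigma(E,E')$ restricts to its own weak topology, to keep the bookkeeping clean.) This diagonalization is the technical heart of the argument.

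For $(i)\Rightarrow(ii)$ I would argue by contraposition. Suppose a bounded sequence $(x_n)$ has no weakly Cauchy subsequence. By a Rosenthal-type dichotomy for bounded sequences in a metrizable lcs, $(x_n)$ then has a subsequence equivalent to the unit basis of $\ell_1$. Passing to the closed subspace it generates, the set $A=\{\tfrac12(x_m-x_n):m<n\}$ reproduces the behaviour established in Proposition \ref{p:Ascoli-L1-weak}: the zero vector lies in $\overline{A}^{\,w}$, so it is a weak cluster point of the bounded set $A$, while the argument of Proposition \ref{p:Ascoli-L1-weak}(7) shows that $A$ has no non-trivial weakly fundamental subsequence, hence no sequence in $A$ can converge weakly to $0$. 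Thus the bounded set $A$ fails to be Fr\'echet--Urysohn in the weak topology, contradicting $(i)$.

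The main obstacle is the $(ii)\Rightarrow(i)$ direction: unlike in the angelic setting, hypothesis $(ii)$ does \emph{not} supply weak cluster points (a weakly Cauchy sequence need neither converge nor cluster without completeness), so one cannot simply invoke angelicity of $E_w$. The whole weight therefore falls on converting the \emph{given} cluster point $x$ together with a weakly Cauchy subsequence into a convergent sequence, i.e. on the diagonalization that forces the weakly Cauchy subsequence to cluster at $x$. I expect that step, rather than the Rosenthal-based converse, to require the most care.
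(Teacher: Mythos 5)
First, a point of comparison: the paper itself gives no proof of Proposition \ref{fre1} --- it is imported from \cite{GKKP}, with Proposition \ref{ruess} (Ruess's Rosenthal-type theorem) as its companion; so your attempt has to be judged on its own merits. The direction (i)$\Rightarrow$(ii) of your proposal is essentially sound, modulo two repairable points: Ruess's dichotomy requires completeness, so it must be applied in the completion $\widehat{E}$ (this is harmless, since $E'=(\widehat{E})'$, hence weak Cauchyness and boundedness are unaffected, and the extracted $\ell_1$-subsequence consists of points of $E$); and the witness of non-Fr\'echet--Urysohnness must be $A\cup\{0\}$ rather than $A$, since the cluster point has to belong to the set in question.

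The genuine gap is exactly where you yourself located the ``technical heart'': the diagonalization in (ii)$\Rightarrow$(i) does not work. Your scheme controls, at each stage, one functional $f_n$ (chosen adaptively because the current limit functional $L_n$ disagrees with $x$ there), refines $D$ to $D\cap U_n$, and diagonalizes. The resulting weakly Cauchy sequence has a limit functional $L$ that agrees with evaluation at $x$ only on the countably many functionals the process happened to produce; nothing forces $L(g)=g(x)$ for the uncountably many $g\in E'$ never considered, and no countable ``determining'' family can be fixed in advance, because the weak topology on a bounded set is in general not determined by countably many functionals (bounded sets of $E_w$ are typically non-metrizable). Continuing transfinitely does not help either: at a countable limit stage the refined set is $D\cap\bigcap_n U_n$, and $x\in\overline{D\cap U}^{\,w}$ for each single neighbourhood $U$ does not give $x\in\overline{D\cap\bigcap_n U_n}^{\,w}$ --- this intersection can even be empty, which is precisely the failure mode your iteration cannot avoid. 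A sanity check that no such soft argument exists: for a separable Banach space, (ii) holds whenever $E$ contains no copy of $\ell_1$ (Rosenthal's theorem), so your diagonalization would give an elementary proof that the unit ball of every separable Banach space without $\ell_1$ is weakly Fr\'echet--Urysohn, i.e.\ of (d)$\Rightarrow$(a) in Theorem \ref{t:S-W-1}; all known proofs of that implication pass through the Bourgain--Fremlin--Talagrand theorem on the angelicity of pointwise compact sets of Baire-one functions. That is the missing ingredient: after your (correct) reduction to a countable $D$ inside a separable closed linear span, one writes $E'=\bigcup_k K_k$ with each $K_k$ equicontinuous, hence weak$^*$-compact and metrizable, so that $E_w$ embeds into $\mathbb{R}^X$ for the Polish space $X=\bigsqcup_k K_k$; hypothesis (ii) then says every sequence in $D$ has a subsequence converging pointwise on $X$ to a Baire-one function, and BFT angelicity of $B_1(X)$ produces a sequence from $D$ converging pointwise (that is, weakly) to $x$. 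Your preliminary claim (a weakly Cauchy sequence clustering at $x$ converges weakly to $x$) is correct, but it cannot substitute for this deep input.
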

\begin{proposition}[\cite{ruess}]\label{ruess}
Let $E$ be a complete lcs such that every bounded set in $E$ is metrizable. Then  $E$ does not contain a copy of $\ell_{1}$ if and only if every bounded sequence in $E$ has a Cauchy subsequence in the weak topology of $E$.
\end{proposition}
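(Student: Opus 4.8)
The plan is to prove the equivalence by contraposition in both directions, with Rosenthal's $\ell_1$-theorem as the single substantial external ingredient. The point to watch is that the standing hypothesis --- metrizability of bounded sets --- enters at exactly one step, while the rest of the argument is valid in any locally convex space. Throughout I use the convenient description: a bounded sequence $(x_n)$ in an lcs $E$ \emph{spans a copy of $\ell_1$} precisely when it admits a lower $\ell_1$-estimate against a single \emph{continuous} seminorm $p$, i.e. $p\big(\sum_i c_i x_i\big)\ge\delta\sum_i|c_i|$ for some $\delta>0$ and all scalars; this is equivalent to the map $\ell_1\to E$, $e_i\mapsto x_i$, being a topological embedding (the seminorm estimate recovers continuity of the inverse, boundedness gives continuity of the map).

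First I would treat the direction ``weakly Cauchy subsequences exist $\Rightarrow$ no copy of $\ell_1$'' contrapositively, and this requires no special hypothesis. If $j\colon\ell_1\to E$ is a topological embedding with $(e_n)$ the unit basis, then $(je_n)$ is bounded in $E$, and I claim it has no weakly Cauchy subsequence. Given $u=(u_n)\in\ell_\infty=\ell_1'$, the functional $je_n\mapsto u_n$ is continuous on $j(\ell_1)$ since $j$ is an isomorphism, so by the Hahn--Banach theorem it extends to some $\psi\in E'$ with $\psi(je_n)=u_n$. Were $(je_{n_k})$ weakly Cauchy, then $u_{n_k}=\psi(je_{n_k})$ would converge for every $u\in\ell_\infty$; choosing $u$ with $u_{n_k}=(-1)^k$ gives a contradiction.

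For the converse direction, assume $E$ contains no copy of $\ell_1$ and fix a bounded sequence $(x_n)$. For a continuous seminorm $p$, let $K_p:=U_p^{\circ}$ be the polar of $U_p=\{p\le 1\}$; by the Alaoglu--Bourbaki theorem $K_p$ is $\sigma(E',E)$-compact, each $x_n$ restricts to an element of $C(K_p)$, and $\sup_n p(x_n)<\infty$ makes $(x_n|_{K_p})$ uniformly bounded, with the key identity $\|\sum_i c_i\,x_i|_{K_p}\|_\infty=p\big(\sum_i c_i x_i\big)$. Applying Rosenthal's $\ell_1$-theorem to $(x_n|_{K_p})$ yields a subsequence that either (I) converges pointwise on $K_p$, or (II) is equivalent to the $\ell_1$-basis in $C(K_p)$, which by the identity above means $p\big(\sum_i c_i x_{n_i}\big)\ge\delta\sum_i|c_i|$ for some $\delta>0$. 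Alternative (II) produces exactly a lower $\ell_1$-estimate against the continuous seminorm $p$, hence a copy of $\ell_1$ in $E$ --- contradicting the hypothesis. Thus, for \emph{every} continuous seminorm $p$, alternative (I) is the only survivor, and, since the same reasoning applies to every subsequence, each subsequence of $(x_n)$ has a further subsequence converging pointwise on $K_p$.

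It remains to pass from this ``for each $p$'' statement to a \emph{single} subsequence converging pointwise on $\bigcup_p K_p=E'$, which is precisely weak Cauchyness; this combining step is the main obstacle and the one place where metrizability of bounded sets is indispensable. When $E$ is metrizable --- the case relevant to Theorem~\ref{t:Ascoli-E-weak}, since it covers all Fr\'echet spaces --- the topology is generated by an increasing sequence of seminorms $(p_j)$, every $\psi\in E'$ satisfies $|\psi|\le C p_j$ for some $j$ and $C$, so a diagonal extraction over the countable family $\{K_{p_j}\}$ gives a subsequence converging pointwise on all of $E'$. In the general complete case one first replaces $\{x_n\}$ by its closed absolutely convex hull $B$, which is bounded and therefore metrizable by assumption; the $E$-topology on $B$ is then governed by countably many continuous seminorms $(p_j)$, and the same diagonal argument localized to $B$ furnishes the desired weakly Cauchy subsequence. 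Note that the $\ell_1$-alternative is never transferred between topologies: because the right notion of an $\ell_1$-sequence asks only for one continuous seminorm, Rosenthal's dichotomy on the equicontinuous polars delivers such a seminorm outright, so the only genuine difficulty is the seminorm bookkeeping, which metrizability of bounded sets reduces to a countable diagonalization.
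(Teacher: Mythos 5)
The paper does not prove this proposition at all --- it is quoted from Ruess's paper \cite{ruess} --- so your attempt has to be judged on its own. Most of it is correct: the direction ``copy of $\ell_1$ $\Rightarrow$ a bounded sequence with no weakly Cauchy subsequence'' via Hahn--Banach extension of the coordinate functionals $u\circ j^{-1}$ is fine; the localization of Rosenthal's theorem to the Banach spaces $C(K_p)$, $K_p=\{p\le 1\}^{\circ}$, using the identity $\sup_{\psi\in K_p}|\psi(x)|=p(x)$ (bipolar theorem) is correct, as is the observation that alternative (II) yields a lower $\ell_1$-estimate against the single continuous seminorm $p$ and hence an isomorphic copy of $\ell_1$ --- note only that the convergence of the series $\sum_i c_i x_{n_i}$ defining the embedding $\ell_1\to E$ is exactly where completeness of $E$ enters, which you leave implicit. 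The metrizable case is also complete, since there every $\psi\in E'$ really is dominated by a multiple of one of the countably many generating seminorms.

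The genuine gap is the last sentence, i.e.\ the general non-metrizable case. Diagonalizing over countably many continuous seminorms $(p_j)$ that generate the relative topology of the closed absolutely convex hull $B$ of $\{x_n\}$ gives a subsequence $(x_{n_k})$ such that $(\psi(x_{n_k}))_k$ converges for every $\psi$ with $|\psi|\le Cp_j$ for some $C,j$ --- but this is \emph{not} weak Cauchyness. Metrizability of $B$ only yields that for each continuous seminorm $q$ and $\eps>0$ there is $j$ with $B\cap\{p_j\le 1\}\sub\{q\le\eps\}$; it does not give $|\psi|\le Cp_j$ on $E$ for an arbitrary $\psi\in E'$, and pointwise convergence on $K_{p_j}$ does not give $p_j$-Cauchyness of the sequence, so ``the same diagonal argument localized to $B$'' does not conclude as written. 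The step can be repaired as follows: the set $D=\{\phi\in E': |\phi|\le Cp_j \mbox{ for some } C>0,\ j\in\NN\}$ is dense in $E'$ for the seminorm $\rho_B(\cdot)=\sup_B|\cdot|$. Indeed, given $\psi\in E'$ with $|\psi|\le q$ and $\eps>0$, choose $j$ with $B\cap W\sub\{q\le\eps\}$, where $W=\{p_j\le 1\}$; then $\psi\in\eps\,(B\cap W)^{\circ}$, and by the bipolar theorem $(B\cap W)^{\circ}$ equals the $\sigma(E',E)$-closed absolutely convex hull of $B^{\circ}\cup W^{\circ}$, which is contained in $B^{\circ}+W^{\circ}$ because $W^{\circ}$ is $\sigma(E',E)$-compact (Alaoglu--Bourbaki), so that sum is $\sigma(E',E)$-closed. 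Hence $\psi=\eta+\phi$ with $\sup_B|\eta|\le\eps$ and $|\phi|\le\eps p_j$, so $\phi\in D$. Since all $x_{n_k}$ lie in $B$, we get $|\psi(x_{n_k})-\psi(x_{n_l})|\le|\phi(x_{n_k})-\phi(x_{n_l})|+2\eps$, and $(\phi(x_{n_k}))_k$ converges; a $3\eps$-argument then gives weak Cauchyness at every $\psi\in E'$. With this approximation lemma inserted, your proof is complete and self-contained, which is more than the paper offers for this statement.
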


\begin{proof} [Proof of Theorem \ref{t:Ascoli-E-weak}]
(i)$\Rightarrow$(ii): By  Proposition \ref{fre1} and Proposition \ref{ruess} every bounded set $A$ in $E$ is even Fr\'echet-Urysohn in the weak topology of $E$. The converse implication (ii)$\Rightarrow$(i) follows from Theorem  \ref{t:Ascoli-Main}.
\end{proof}

We complete the paper with a few open questions. By Proposition \ref{p:Ascoli-L1-weak}, there is a countable (hence Lindel\"{o}f) non-Ascoli space $A$. So $A$ is homeomorphic to a closed subspace of  some $\IR^\kappa$. As $\IR^\kappa$ is a $k_\IR$-space, we see that  a $k_\IR$-space  may contain a countable closed non-Ascoli subspace.
So the $k_\IR$-space property and the Ascoli property are not preserved in general by closed subspaces.
\begin{question} \label{q:Ascoli-closed-hereditary}
Let $X$ be an Ascoli space such that every closed subspace of $X$ is Ascoli. Is $X$ a $k$-space?
\end{question}

Arhangelskii \cite[3.12.15]{Eng} proved that a topological space $X$ is a hereditarily $k$-space if and only if $X$ is Fr\'{e}chet--Urysohn.
\begin{question} \label{q:Ascoli-hereditary}
Let $X$ be a hereditarily Ascoli space. Is $X$ Fr\'{e}chet--Urysohn?
\end{question}

Let $E=C_p(\omega_1)=\IR^{\omega_1}$. Then the lcs $E$ is a $k_\IR$-space by \cite[Theorem 5.6]{Nob} and is not a $k$-space by \cite[Problem 7.J(b)]{Kelley}. So the $k_\IR$-space property and the Ascoli property are not equivalent to the $k$-space property for $C_p$-spaces, see  the Pytkeev and Gerlits--Nagy Theorem \cite[II.3.7]{Arhangel}.
\begin{question} \label{q:Ascoli-Cp}
For which Tychonoff spaces $X$ the space $C_p(X)$ is Ascoli (or a $k_\IR$-space)?
\end{question}

It is well-known (see \cite[III.1.2]{Arhangel}) that, for a compact space $K$, the space $C_p(K)$ is a $k$-space if and only if $K$ is scattered. Below we generalize this result.
\begin{proposition}\label{p:Ascoli-Cp-scattered}
Let $K$ be a compact space. Then $C_{p}(K)$ is a $k_\IR$-space if and only if $K$ is scattered.
\end{proposition}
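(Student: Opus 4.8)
The plan is to prove the two implications separately, citing the routine one and producing a single test function for the substantive one. For the easy direction, if $K$ is scattered then $C_p(K)$ is a $k$-space by the classical result \cite[III.1.2]{Arhangel} quoted just above, and every $k$-space is a $k_\IR$-space, so $C_p(K)$ is a $k_\IR$-space. For the converse I would argue the contrapositive: if $K$ is \emph{not} scattered, then $C_p(K)$ is not a $k_\IR$-space, which by the very definition of that property means exhibiting a function $\Phi\colon C_p(K)\to\IR$ that is discontinuous but whose restriction to every compact subset is continuous. (Note this disproves $k_\IR$ directly, without passing through the Ascoli property, consistently with the fact that the Ascoli case of Question \ref{q:Ascoli-Cp} is left open.)

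The construction rests on the classical theorem of Rudin that a compact space $K$ is scattered if and only if every regular Borel measure on $K$ is purely atomic. Since $K$ is not scattered I may fix a non-atomic (diffuse) regular Borel probability measure $\mu$ on $K$; in particular $\mu$ vanishes on finite sets. I also fix a bounded uniformly continuous $\eta\colon\IR\to\IR$ with $\eta(0)=0$, say $\eta(t)=\min\{|t|,1\}$, and set $\Phi(f)=\int_K \eta\circ f\,\de\mu$. Discontinuity of $\Phi$ at the zero function is then immediate: a basic neighbourhood of $\mathbf 0$ constrains $f$ only on a finite set $F_0$, while by Urysohn's lemma and $\mu(F_0)=0$ one finds continuous $f$ vanishing on $F_0$ and equal to $1$ on a closed set of positive $\mu$-measure, whence $\Phi(f)$ stays bounded away from $\Phi(\mathbf 0)=0$ on every such neighbourhood.

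The heart of the matter is that $\Phi$ is nonetheless continuous on each compact $\mathcal C\subseteq C_p(K)$, and here the boundedness of $\eta$ is decisive. The composition map $f\mapsto \eta\circ f$ is continuous from $C_p(K)$ into itself, so the image $\mathcal D=\{\eta\circ f:f\in\mathcal C\}$ is again compact in $C_p(K)$; crucially, it is now uniformly bounded by $\|\eta\|_\infty=1$. By Grothendieck's theorem a uniformly bounded, pointwise compact subset of $C(K)$ is weakly compact, and on such a set the weak topology of $C(K)$ coincides with the topology of pointwise convergence. Since $g\mapsto\int_K g\,\de\mu$ is a weakly continuous functional on $C(K)$, it is therefore continuous on $\mathcal D$ for the pointwise topology; composing with the continuous map $\mathcal C\to\mathcal D$ shows that $\Phi|_{\mathcal C}$ is continuous. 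This establishes that $C_p(K)$ is not a $k_\IR$-space and finishes the converse.

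The main obstacle is precisely this last step: recovering control of an integral from finitely many pointwise values is impossible in general, so an unbounded choice such as $\Phi(f)=\int_K|f|\,\de\mu$ would fail to be continuous even on simple convergent sequences. The whole argument hinges on the truncation by a \emph{bounded} $\eta$, which relocates the problem into a uniformly bounded, hence weakly compact, subset of $C(K)$, where Grothendieck's theorem equates pointwise and weak convergence. I would also be careful that ``non-atomic'' is used in the sense guaranteeing $\mu$ vanishes on finite sets and charges suitable closed sets, so that the Urysohn construction in the discontinuity step is genuinely available; and that the role of non-scatteredness is exactly to supply such a $\mu$, since for a purely atomic measure $\Phi$ would be a uniform limit of $C_p$-continuous functions and hence continuous.
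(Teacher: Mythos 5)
Your proof is correct, and its skeleton coincides with the paper's: both treat the scattered direction by quoting \cite[III.1.2]{Arhangel}, and both treat the converse by integrating a \emph{bounded} truncation of $f$ against a diffuse regular Borel probability measure $\mu$ on $K$ (the paper uses $\Psi(g)=\int_K \frac{|g|}{|g|+1}\dx\mu$, you use $\Phi(f)=\int_K\min\{|f|,1\}\dx\mu$; the truncation plays the identical role), with discontinuity at $\mathbf 0$ coming from the fact that basic $C_p$-neighbourhoods constrain $f$ only on finite, hence $\mu$-null, sets. The genuine differences are in the two supporting ingredients. For the measure, the paper takes a continuous surjection $p:K\to[0,1]$ (\cite[8.5.4]{sema}, available exactly when $K$ is not scattered) and pulls Lebesgue measure back via its own Lemma \ref{image_measure}, staying inside tools already assembled in the paper; you instead invoke Rudin's characterization of scattered compacta by purely atomic measures, which is equally legitimate but imports an external classical theorem (and, strictly, requires passing to the continuous part of a not-purely-atomic measure and normalizing). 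More substantively, for continuity on compacta the paper proves sequential continuity by dominated convergence and then uses that compact subsets of $C_p(K)$ are Fr\'echet--Urysohn (\cite[III.3.6]{Arhangel}) --- the same mechanism as Lemma \ref{l:Ascoli-sequential} --- whereas you push a compact $\mathcal C\subset C_p(K)$ through the truncation to obtain a uniformly bounded pointwise compact subset of $C(K)$, apply Grothendieck's theorem to identify the pointwise and weak topologies on it, and conclude from weak continuity of $g\mapsto\int_K g\dx\mu$. Your route avoids sequential arguments entirely and makes transparent that boundedness of the truncation is the crux; the paper's route is more elementary and parallels its Banach-space arguments. Finally, your Urysohn-plus-inner-regularity construction for the discontinuity step is a self-contained replacement for the paper's citation of \cite[II.3.5]{Arhangel}. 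Both proofs are sound.
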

\begin{proof}
If $K$ is scattered, then $C_{p}(K)$ is Fr\'echet--Urysohn, and we are done, see \cite[Theorem III.1.2]{Arhangel}.
Now  assume that $K$ is not scattered. Then there is a continuous map $p$ from $K$ onto $[0,1]$ by \cite[8.5.4]{sema}.
Let $\lambda$ be the Lebesgue measure on $[0,1]$. Take a measure $\mu$ on $K$ such that $p[\mu]=\lambda$ (see Lemma \ref{image_measure}). Note that the measure $\mu$ vanishes on points.
If we define
\[
\Psi(g)=\int_X\frac{|g|}{|g|+1}\dx\mu,
\]
then $\Psi$ is easily seen to be sequentially continuous on $C_p(K)$ by the Lebesgue theorem. This implies that $\Psi$ is continuous on every compact subset $\KK$ of $C_p(X)$ (recall that $\KK$ is Fr\'{e}chet--Urysohn, see \cite[Theorem III.3.6]{Arhangel}). On the other hand, it is easy to construct a family $\GG$ of functions $g:K\to [0,1]$ such that $\int_K g\dx\mu\geq 1/2$ and the zero function lies in the pointwise closure of $\GG$, see \cite[Theorem II.3.5]{Arhangel}). This means
that $\Psi$ is not continuous on $C_p(K)$.
\end{proof}

\begin{remark}{\em
Let $\kappa$ be a cardinal number endowed with the discrete topology. Then $C_p(\kappa)=\IR^\kappa$ is a $k_\IR$-space by \cite{Nob}. Recall also that in a model of set theory without weakly inaccessible cardinals,  any sequentially continuous function on  $\IR^\kappa$ is in fact continuous,
 see \cite{Pl93} for further references. }
\end{remark}

Theorem \ref{t:Ascoli-Ck-Metriz} and Proposition \ref{p:Ascoli-Cp-scattered} motivate the following problem.
\begin{question}
Does there exist $X$ such that $\CC(X)$ or $C_p(X)$ is Ascoli but is not a $k_\IR$-space?
\end{question}

For a Tychonoff space $X$ denote by $L(X)$ (respectively, $F(X)$ and $A(X)$) the free locally convex space (the free or the free abelian topological group) over $X$.
\begin{question} \label{q:Ascoli-Free}
Let $L(X)$ ($F(X)$ or $A(X)$) be an Ascoli space. Is $X$ Ascoli?
\end{question}
\begin{question} \label{q:Ascoli-Free-F}
For which metrizable spaces $X$, the groups $F(X)$ and $A(X)$ are Ascoli?
\end{question}

In \cite{Gabr} the first named author proved that the free lcs $L(X)$ over a Tychonoff space $X$ is a $k$-space if and only if $X$ is a discrete countable space.
\begin{question}  \label{q:Ascoli-Free-Discr}
Let $L(X)$ be an Ascoli space. Is $X$ a discrete countable space?
\end{question}
We do not know  the answer even if ``Ascoli'' is replaced by a stronger assumption ``$L(X)$ is a $k_\IR$-space'' (see \cite[Question 3.6]{Gabr}).

\vspace{3mm}
{\bf Acknowledgments}.
The authors are deeply indebted to Professor R.~Pol who sent  to T.~Banakh and the first named author a solution of Problem 6.8 in \cite{BG} (see   Corollary \ref{c:Ascoli-Ck-Pol}). In \cite{Pol-2015},   R.~Pol  noticed that it can be shown that the space $\CC(M)$, where $M$ is the countable metric fan, contains a closed countable non-Ascoli subspace using ideas from \cite{Pol-1974}. Using this fact and stratifiability of metric spaces, for a separable metric space $X$,  R.~Pol  proved that the space $\CC(X)$ is  Ascoli if and only if  $X$ is locally compact. We provide another proof of a more general result by modifying the proof of the assertion in Section 5 of \cite{Pol-1974} (see Proposition \ref{p:Ascoli-Pol}).

\bibliographystyle{amsplain}

\end{document}